\DeclareMathOperator{\WI}{W}
\DeclareMathOperator{\asc}{asc}
\DeclareMathOperator{\des}{des}
\numberwithin{equation}{section}
\newtheorem{thm}{Theorem}[section]
\newtheorem{lem}[thm]{Lemma}
\newtheorem{prop}[thm]{Proposition}
\newtheorem{cor}[thm]{Corollary}
\newtheorem{prob}[thm]{Problem}
\newtheorem{rem}[thm]{Remark}
\newtheorem{exam}[thm]{Example}
\DeclareMathOperator{\level}{level}
\DeclareMathOperator{\levMax}{levMax}
\DeclareMathOperator{\levSum}{levSum}
\DeclareMathOperator{\Ca}{\mathcal{C}}
\newcommand{\md}{{\rm mod}}
\definecolor{webgreen}{rgb}{0,.5,0}
\definecolor{webbrown}{rgb}{.6,0,0}
\begin{document}

\title{Patterns in Multi-dimensional Permutations\thanks{  Shaoshi Chen, Hanqian Fang and Candice X.T.\ Zhang were partially supported by the NSFC grant (No. 12271511), the CAS Funds of
the Youth Innovation Promotion Association (No.\ Y2022001), the Strategic Priority Research Program of the Chinese Academy of Sciences (No.\ XDB0510201),
and the National Key R\&D Program of China (No.\ 2023YFA1009401). Candice X.T.\ Zhang was partially supported by the Postdoctoral Fellowship Program of CPSF under Grant Number: GZC20241868.
}}
\author{Shaoshi Chen\footnote{KLMM, Academy of Mathematics and Systems Science, Chinese Academy of Sciences, Beijing 100190, P. R. China. Email: schen@amss.ac.cn.}, Hanqian Fang\footnote{KLMM, Academy of Mathematics and Systems Science, Chinese Academy of Sciences, Beijing 100190, P. R. China.  Email: fanghanqian22@mails.ucas.ac.cn.}, Sergey Kitaev\footnote{Department of Mathematics and Statistics, University of Strathclyde, 26 Richmond Street, Glasgow G1 1XH, United Kingdom. Email: sergey.kitaev@strath.ac.uk.}\ \ and Candice X.T. Zhang\footnote{KLMM, Academy of Mathematics and Systems Science, Chinese Academy of Sciences, Beijing 100190, P. R. China. Email: zhangxutong@amss.ac.cn.}}

\date{\today}

\maketitle

\noindent\textbf{Abstract.}
In this paper, we propose a general framework that extends the theory of permutation patterns to higher dimensions and unifies several combinatorial objects studied in the literature. Our approach involves introducing the concept of a ``level'' for an element in a multi-dimensional permutation, which can be defined in multiple ways. We consider two natural definitions of a level, each establishing connections to other combinatorial sequences found in the Online Encyclopedia of Integer Sequences (OEIS).

Our framework allows us to offer combinatorial interpretations for various sequences found in the OEIS, many of which previously lacked such interpretations. As a notable example, we introduce an elegant combinatorial interpretation for the Springer numbers: they count weakly increasing 3-dimensional permutations under the definition of levels determined by maximal entries.\\

\noindent {\bf AMS Classification 2010:}  05A05; 05A15

\noindent {\bf Keywords:}  Permutation pattern, multi-dimensional permutation, enumeration, Springer number

\tableofcontents


\section{Introduction}\label{sec-intro}

The field of permutation patterns is a popular and rapidly growing area of research \cite{Kitaev2011}. In the literature, there have been successful attempts to extend the theory of permutation patterns to higher dimensions through the introduction of \emph{multi-dimensional permutations}, which can be defined in several ways. In \cite{ZG2007,AM2010,AKLPT}, the authors defined a multi-dimensional permutation as a sequence of one-line permutations, with the first one being the natural order permutation $12\cdots n$.
Under this definition, Zhang and Gildea \cite{ZG2007} studied the hierarchical decomposition of multi-dimensional permutations by considering them as alignment structures across sequences or visualizing them as the result of permuting $n$ hypercubes of certain dimensions; Asinoeski and Mansour \cite{AM2010} examined the multi-dimensional generalization of separable permutations, providing both a characterization and enumeration of these permutations; the authors in \cite{AKLPT} generalized the concept of mesh patterns in multi-dimensional permutations. Moreover, the authors in \cite{EriLin, Linial2014, Linial2018, Linial2019} defined multi-dimensional permutations from the perspective of high-dimensional analogues of permutation matrices and derived their combinatorial properties.
For our purposes, we define a multi-dimensional permutation as a collection of permutations of the same length, expressed in one-line notation and arranged in matrix form (see Section~\ref{prelim-sec} for a formal definition). The elements of these permutations correspond to the columns.

In this paper, we propose a general framework based on the notion of a pattern in a multi-dimensional permutation. This framework allows us to provide new combinatorial interpretations for various sequences studied in the literature and to unify several combinatorial objects appearing in the {\em Online Encyclopedia of Integer Sequences} ({\em OEIS}) \cite{oeis}. For example, the $d$-dimensional hoe permutations, for $d=4,5,6$ give, respectively, sequences  A023000, A135518, A218734 in \cite{oeis} with no combinatorial interpretation (see Section~\ref{hoe-section}), while $c$-bounded 3-dimensional permutations under a canonical representation for $c=2,3,4$ correspond to sequences A008776, A002023, A235702 in \cite{oeis}, respectively, with no common interpretation (see Section~\ref{c-bounded-perms-sec}).

To extend permutation patterns to the multi-dimensional setting, we introduce the notion of a {\em level} of an element in a multi-dimensional permutation, which is a function mapping each column to a non-negative integer. The notion of a level can be defined in multiple ways, two of which, $\levMax$ and $\levSum$, are studied in this paper, each revealing interesting connections to known combinatorial objects. $\levMax(C)$ returns the maximal entry in a column $C$, while $\levSum(C)$ sums the entries of $C$. It is noteworthy that some of the involved research questions for $\levMax$ have trivial solutions for $\levSum$, and conversely. This observation holds generally: certain challenging research problems under one definition of level may have straightforward solutions under the other, and vice versa.

Our most intriguing results focus on enumerating multi-dimensional permutations under $\levMax$. One notable finding is the connection between certain 3-dimensional permutations and sequence A008971 in~\cite{oeis}, which counts (2-dimensional) permutations of length $n$ with $k$ increasing runs of length at least 2, as discussed in \cite[Chapter 10]{DB1962}. This leads us to introduce, in Theorem~\ref{thm-R(x,y)}, an alternative form for the exponential generating function of these numbers, as provided in \cite[A008971]{oeis} and derived by Zhuang \cite{Zhuang2016} through the application of Gessel's run theorem \cite{Gessel1977}.

Another significant result for multi-dimensional permutations under $\levMax$, demonstrated in Section~\ref{WI-perms}, is that {\em weakly increasing} 3-dimensional permutations are enumerated by the {\em Springer numbers}.  This important class of numbers, rooted in representation theory, encompasses various combinatorial interpretations. For example, Arnol'd~\cite{Arnold} showed that the Springer numbers enumerate a signed-permutation analogue of the alternating permutations, involving the notion of ``snakes of type $B_n$''. Several other combinatorial objects are also enumerated by the Springer numbers: Weyl chambers in the principal Springer cone of the Coxeter group $B_n$~\cite{Springer1971}, topological types of odd functions with $2n$ critical values~\cite{Arnold}, labeled ballot paths~\cite{Chen-et-al}, certain classes of complete binary trees and plane rooted forests~\cite{J-V2014}, and up-down permutations of even length fixed under reverse and complement~\cite{HKZ2024}. Additionally, the Springer numbers are studied in relation to the classical moment problem~\cite{Sokal2020}.

This paper is organized as follows. In Section~\ref{prelim-sec}, we introduce all necessary definitions and notations, including levels and patterns in multi-dimensional permutations.  Section~\ref{levMax-sec} is devoted to the study of properties of patterns in multi-dimensional permutations when levels are defined by $\levMax$.
In particular, we  explicitly derive exponential generating functions for  weakly increasing and unimodal 3-dimensional permutations under $\levMax$. Additionally, we investigate real-rootedness of certain relevant polynomials.
In Section \ref{levSum-sec}, we define and study several types of special permutations when the levels are defined by  $\levSum$. Finally, in Section~\ref{open-questions-sec}, we provide concluding remarks and suggest some problems for further research.

\section{Multi-dimensional permutations, levels, and patterns}\label{prelim-sec}
{In this section, we will introduce the definitions and notations related to multi-dimensional permutations. Specifically, we will define the notion of ``level'' to analyze ``pattern'' occurrences in multi-dimensional permutations, which will be detailed in Subsections \ref{levels-def-sec} and \ref{pattern-in-multi-per}, respectively.}

Firstly, we follow \cite{AKLPT} to define the notion of a multi-dimensional permutation. Due to technical reasons (which will become clear after we introduce the notion of a level, specifically $\levSum$), we need to assume that permutations in the symmetric group $S_n$ involve elements from $\{0,1,\ldots,n-1\}$ instead of the more typical $\{1,2,\ldots,n\}$.

Let $\pi = \pi_1 \pi_2 \dots \pi_n$ be a permutation of length $n$ ($n$-permutation) in $S_n$.  As written, $\pi$ is in one-line notation, while its two-line notation is \[\pi=\left(\begin{array}{cccc}0&1&\dots&n-1\\ \pi_1 & \pi_2 & \dots & \pi_n\\ \end{array}\right).\]
A \textit{$d$-dimensional permutation $\Pi$ of length $n$} (or \textit{$d$-dimensional $n$-permutation}, or \textit{$(d,n)$-permutation}) is an ordered $(d-1)$-tuple $(\pi^2,\pi^3, \dots , \pi^d)$ of $n$-permutations where for each $2\leq i\leq d$, $\pi^i=\pi_1^i\pi_2^i\dots\pi_n^i\in S_n$. For example, $(201,201,120)$ is a 4-dimensional permutation of length~3. We let $S^d_n$ denote the set of $d$-dimensional permutations of length $n$. Note that $S^2_n$ corresponds naturally to $S_n$, hence ``usual'' permutations are 2-dimensional permutations.  We also generalize two-line notation to $d$-line notation and we write
\renewcommand{\arraystretch}{1.25}
\[\Pi=\left(\begin{array}{cccc}
0&1&\dots&n-1\\
\pi^2_1 & \pi^2_2 & \dots & \pi^2_n\\
 \pi^3_1 & \pi^3_2 & \dots & \pi^3_n\\
  \vdots & \vdots & \ddots & \vdots\\
   \pi^d_1 & \pi^d_2 & \dots & \pi^d_n\\ \end{array}\right)=\left(\begin{array}{cccc}\pi^1_1&\pi^1_2&\dots&\pi^1_n\\
\pi^2_1 & \pi^2_2 & \dots & \pi^2_n\\
 \pi^3_1 & \pi^3_2 & \dots & \pi^3_n\\
  \vdots & \vdots & \ddots & \vdots\\
   \pi^d_1 & \pi^d_2 & \dots & \pi^d_n\\ \end{array}\right),\]
so that $\Pi$ corresponds naturally to a $d\times n$ matrix.  It is also helpful to let $\pi^1$ denote the permutation $01\dots (n-1)$ so that we can succinctly write \[\Pi=\left\{\pi^i_j\right\}_{\begin{subarray}{l} 1 \leq i\leq d \\ 1\le j\le n\end{subarray}}.\]  Motivated by two-line notation, we say that the columns of this matrix represent the \textit{elements of $\Pi$} which we denote by  $\Pi_i$.  In particular, we write $\Pi=\Pi_1\Pi_2\dots \Pi_n$ where $\Pi_i$ is the $d$-tuple $({i-1},\pi^2_i, \pi^3_i,\ldots, \pi^d_i)^T$.  For example, if $\Pi = (\pi^1, \pi^2, \pi^3)$ is a $3$-dimensional permutation of length $5$ with $\pi^2 = 01423$ and $\pi^3 = 40132$, then we write
\renewcommand{\arraystretch}{1}
\begin{equation}\label{3-dim-perm-example}
\Pi=\left(\begin{array}{c}\pi^1\\ \pi^2\\ \pi^3 \end{array}\right)=\left(\begin{array}{ccccc}
0&1&2&3&4\\
0&1&4&2&3\\
4&0&1&3&2\\
\end{array}\right)
\end{equation}
or $\Pi=\Pi_1\Pi_2\Pi_3\Pi_4\Pi_5$ where $\Pi_1=(0,0,4)^T$, $\Pi_2=(1,1,0)^T$, $\Pi_3=(2,4,1)^T$, $\Pi_4=(3,2,3)^T$, $\Pi_5=(4,3,2)^T$. However, throughout this paper, we usually omit $\pi^1$, so $d$-dimensional permutations are typically represented by $d-1$ rows and slightly abusing the notation, the $i$-th element (the $i$-th column with $d-1$ elements) is still denoted by $\Pi_i$. This will never cause any confusion. Furthermore, we sometimes concern about the case of $\pi^2=01\ldots(n-1)$. Specifically, given a subset $T$ of $S_n^d$, let $\Ca(T)$ denote the subset of $T$ consisting of those \textit{canonical permutations}, i.e., the permutations $\Pi=(\pi^2,\pi^3,\ldots,\pi^d)$ with $\pi^2=01\ldots(n-1)$.

\subsection{Levels}\label{levels-def-sec}

In considering the extension of results on permutation patterns to the multi-dimensional case, similar to previous work such as \cite{AM2010, AKLPT, ZG2007}, we encounter the need to compare two elements in a multi-dimensional permutation.
A natural way to compare two elements is via the notion of a {\em level} of an element $\Pi_i$ in $\Pi\in S_n^d$,  denoted by $\level(\Pi_i)$, which is simply a function mapping $\Pi_i$ to a non-negative integer. We let $\level(\Pi)=(\level(\Pi_1),\ldots,\level(\Pi_n))$ be the {\em vector of levels} of $\Pi$. Furthermore, we let $\level_{\min}(\Pi)$ (resp. $\level_{\max}(\Pi)$) be the minimum (resp., maximum) entry in $\level(\Pi)$.

The introduction of levels allows for the comparison of elements within a multi-dimensional permutation, extending and generalizing the classical theory of permutation patterns to higher dimensions. It is important to note that if we define the level in such a way that any two distinct elements in a multi-dimensional permutation correspond to distinct levels (for instance, by defining the level vector of a permutation $\Pi=(\pi^2,\pi^3,\ldots,\pi^d)$ to be $\pi^2$), then such definitions are equivalent across our framework and therefore lacks novelty. Consequently, we focus on level functions that may lead to some repeated entries in the level vectors of certain permutations, as these present more interesting and non-trivial structures. Specifically speaking, in this paper, we focus on two ways to define the function $\level$:
\begin{itemize}
\item $\levMax(\Pi_i)$ is the maximum entry in $\Pi_i$, and
\item $\levSum(\Pi_i)$ is the sum of all $d-1$ entries in $\Pi_i$.
\end{itemize}

For example, for the 4-dimensional permutation in (\ref{3-dim-perm-example}) (with the increasing top row removed), $\levMax(\Pi)=(4,1,4,3,4)$,  $\levMax(\Pi_2)=\levMax_{\min}(\Pi)=1$, $\levMax_{\max}(\Pi)=4$,  $\levSum(\Pi)=(4,2,7,8,9)$, $\levSum_{\min}(\Pi)=2$,  and $\levSum_{\max}(\Pi)=9$.

Note that for $\levSum$, each level in $\{0,1,\ldots, (d-1)(n-1)\}$ is attainable among all permutations in $S_n^d$. This is the reason we set the minimum element in a permutation in $S_n$ to be 0 rather than 1. Otherwise, for example, level~1 would never be achievable for $d\geq3$, along with some other levels depending on the value of~$d$.

\subsection{Patterns}\label{pattern-in-multi-per}
In this subsection, we will review the classical pattern occurrences in the 2-dimensional case and then generalize this concept to the multi-dimensional case.

In the 2-dimensional case, a permutation $\pi_1\pi_2\dots\pi_n\in S_n$ {\em contains} an occurrence of a {\em pattern} $p=p_1p_2\dots p_k\in S_k$  if there is a subsequence $\pi_{i_1}\pi_{i_2}\dots\pi_{i_k}$ such that $\pi_{i_j}<\pi_{i_m}$ if and only if $p_j<p_m$. A permutation {\em avoids} a pattern if it contains no occurrences of the pattern.
To match the notation used in the permutation patterns literature, such as in \cite{Kitaev2011}, we assume that a pattern consist of the elements in $\{1,2,\ldots,k\}$ rather than $\{0,1,\ldots,k-1\}$. For example, the permutation $\pi=21034$ avoids the pattern $231$ while $\pi$ contains three occurrences of the pattern 123 (the subsequences 234, 134, and 034).

There are myriad ways to introduce the concept of pattern occurrences in multi-dimensional permutations. For example, we can view such a permutation as a matrix and explore occurrences of various 2-dimensional patterns, similar to previous work \cite{JuSeo2012,KitManVel}  with binary matrices. However, our focus lies in the straightforward extension (or generalization) of the 2-dimensional case to multi-dimensions by seeking pattern occurrences in vectors of permutation levels. This approach allows us to compare any two elements in a multi-dimensional permutation, leading us to introduce the notion of a level. Additionally, in contrast to the two-dimensional case where for each pair $(a,b)$ of elements either $a<b$ or $a>b$, in the case of  $d$-dimensional permutations, $d\geq 3$, it is also possible that $a=b$. Therefore, we study occurrences of patterns in sequences using the alphabet $\{0,1,\ldots\}$~\cite{HeuMan}, bearing in mind that the same word can appear multiple times, while some other words may never appear.

Formally, a permutation $\Pi=\Pi_1\dots\Pi_n\in S_n^d$ {\em contains} an occurrence of a {\em pattern} $p=p_1p_2\dots p_k$ (which is a word over an alphabet $A=\{1,\ldots,s\}$ for some $s\leq k$ and each letter in $A$ occurs in $p$)  if there is a subsequence $\Pi_{i_1}\Pi_{i_2}\dots\Pi_{i_k}$ such that $\level(\Pi_{i_j})<\level(\Pi_{i_m})$ (resp., $\level(\Pi_{i_j})=\level(\Pi_{i_m})$) if and only if $p_j<p_m$ (resp., $p_j=p_m$). We are particularly interested in {\em consecutive patterns}, in occurrences of which elements stay next to each other (i.e., $i_2=i_1+1$, $i_3=i_2+1$, etc in the definition of an occurrence of a pattern). To distinguish consecutive patterns, we underline them, like $\underline{11}$, $\underline{12}$, $\underline{21}$, $\underline{121}$, $\underline{123}$, etc, precisely as it is done in \cite{Kitaev2011}. We note that occurrences of $\underline{11}$ are also known in the literature as {\em plateaux} or {\em levels} in words; however,  we reserve the term  ``level'' for another purpose. Additionally, occurrences of the patterns $\underline{12}$ and $\underline{21}$ are referred to in the literature as {\em ascents} and {\em descents}, respectively, with the respective numbers denoted by $\asc$ and $\des$.
Let $p(\Pi)$ denote the number of occurrences of a pattern $p$ in a permutation $\Pi$. For example, for the 4-dimensional permutation $\Pi$ in (\ref{3-dim-perm-example}), we have
\begin{itemize}
\item under $\levMax$: $\levMax(\Pi)=(4,1,4,3,4)$, $\underline{11}(\Pi)=0$, $\underline{12}(\Pi)=\asc(\Pi)=2$,  $\underline{21}(\Pi)=\des(\Pi)=2$, $212(\Pi)=4$, $123(\Pi)=122(\Pi)=1$ and $321(\Pi)=0$.
\item under $\levSum$: $\levSum(\Pi)=(4,2,7,8,9)$, $\underline{11}(\Pi)=0$, $\underline{12}(\Pi)=\asc(\Pi)=3$,  $\underline{21}(\Pi)=\des(\Pi)=1$, $1234(\Pi)=2$ and $213(\Pi)=3$.
\end{itemize}
For any fixed pattern $p$ and chosen notion of level, $S^d_n(p)$ denotes the set of permutations in $S^d_n$ that avoid the pattern $p$.


\section{$\levMax$: levels defined by maximal entries}\label{levMax-sec}

In this section, we will explore several patterns in multi-dimensional permutations under $\levMax$. Our results concerning $\levMax$ primarily focus on three-dimensional permutations, including permutations with $k$ levels repeated, weakly increasing permutations, and unimodal permutations. Although generalizing from three dimensions to higher dimensions is challenging for most enumeration problems, we have nevertheless obtained some results regarding general multi-dimensional permutations, such as the enumeration of a special class of unimodal permutations which we refer to as ``hoe permutations'' in $S_n^d$ and the total number of plateaux among all permutations in $S_n^d$.

\subsection{Permutations in $S_n^3$ with $k$ levels repeated}\label{k-lev-rep-sec}
In this subsection, we consider the enumeration of those canonical permutations in $S_n^3$ with exactly $k$ levels repeated, denoted by $R_{n,\,k}$.
That is,
\[R_{n,\,k}:=| \{\Pi\in \Ca(S_n^3) : \text{there are exactly $k$ distinct levels repeated twice in }\Pi \}|. \]
For example, the permutation \[\left(\begin{array}{ccccccc}0&1&2&3&4&5&6\\ 1&2&4&5&3&0&6\\ \end{array}\right)\in \Ca(S_7^3)\]
contributes to the count of $R_{7,\,2}$ since levels 4 and 5 are the only ones repeated in it.

In the study of these numbers, we derive their recurrence relations and bivariant exponential generating functions. Furthermore, we obtain the real-rootedness of the generating functions of these numbers with $n$ fixed. Notably, the research of $R_{n,\,k}$ is not only intriguing in its own right but also plays a crucial role in deriving enumeration results for weakly increasing and unimodal permutations, which will be discussed in the following subsections.

\subsubsection{Recurrence relations}\label{subsec-R-rec}

Our first goal is to derive the recurrence relations for $R_{n,\,k}$ in a purely combinatorial manner.
This recurrence relation was, in fact, first obtained by David and Barton \cite[page 163]{DB1962} in their study of increasing runs of permutations. Using techniques of context-free grammar \cite{Chen1993}, Ma \cite{MA2012} also derived the same recurrence relation.

\begin{thm}\label{thm-rec-R(n,k)}
For $n\geq1$ and $k\geq1$, we have
	\begin{align}\label{eq-rec-R(n,k)}
		R_{n,\,k}=(2k+1) R_{n-1,\,k}+(n-2k+1)R_{n-1,\,k-1}
	\end{align}
 	with $R_{n,\,0}=1$ for any $n\ge 0$ and $R_{0,\,k}=0$ for any $k\ge 1$.
\end{thm}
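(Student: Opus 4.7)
The plan is to prove the recurrence by analyzing the canonical bijection $\Ca(S_n^3) \leftrightarrow \Ca(S_{n-1}^3) \times \{1,\dots,n\}$: for each $\Pi \in \Ca(S_n^3)$, let $p$ be the column in which the value $n-1$ appears in the bottom row and let $\Pi' \in \Ca(S_{n-1}^3)$ be obtained by deleting column $p$ and re-indexing, so that $\Pi$ is recovered from $\Pi'$ by inserting a column with bottom value $n-1$ at position $p$. The boundary data $R_{n,0}=1$ (realized only by the identity $\pi^3=01\cdots(n-1)$, whose level sequence is $0,1,\dots,n-1$) and $R_{0,k}=0$ for $k\geq 1$ are immediate, so the task reduces to showing that the number of pairs $(\Pi',p)$ yielding a $\Pi$ with $k$ repeated levels equals $(2k+1)R_{n-1,k}+(n-2k+1)R_{n-1,k-1}$.

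The heart of the argument is to track, column by column, the effect of the insertion on the $\levMax$ sequence. If $p=n$ then the new column has level $n-1$, strictly above all levels of $\Pi'$ (which are bounded by $n-2$), and every other column is unchanged; hence $k(\Pi)=k(\Pi')$. If $p<n$ then the level $n-1$ appears both at the inserted column $p$ and at column $n$ (whose top entry is now $n-1$), producing a fresh repeated level. At the same time, each column at position $j>p$ inherits its bottom entry from column $j-1$ of $\Pi'$ with the top entry raised by one: its level is preserved when $\pi'^3_{j-1}\geq j-1$ and incremented by one otherwise. The level $p-1$ itself can never be repeated in $\Pi$ because the inserted column has bottom entry $n-1>p-1$. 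The combined effect of the new repeat at level $n-1$, the destroyed repeat (if any) at level $p-1$, and the shifts among levels $v\geq p$ restricts the change in the repeated-level count to the two values $0$ and $+1$.

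The final step is to classify the $n$ insertion positions as preserving ($k(\Pi)=k(\Pi')$) or incrementing ($k(\Pi)=k(\Pi')+1$), and to verify that the aggregate counts over $\Pi' \in \Ca(S_{n-1}^3)$ match $(2k+1) R_{n-1,k}$ and $(n-2k+1) R_{n-1,k-1}$ respectively. The main obstacle is that, unlike in the classical David--Barton proof for the equidistributed statistic ``number of increasing runs of length at least two'' on $S_{n-1}$ (where every base permutation with $k$ runs admits exactly $2k+1$ preserving insertions of the new maximum), the per-$\Pi'$ contribution for $\levMax$-repeated levels is \emph{not} uniform. For $n=4$ and $k'=1$, for example, the base $\pi'^3=021$ admits four preserving insertions of $3$ while $\pi'^3=201$ admits only two, yet the totals still aggregate correctly to $15=3\cdot 5=(2k+1)R_{3,k}$. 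Carrying out the proof therefore requires a more delicate bookkeeping, either by introducing an auxiliary statistic on $\Pi'$ whose joint distribution with $k(\Pi')$ restores uniformity, or by constructing an explicit pairing that exchanges insertions between different base permutations while preserving the total contribution to $R_{n,k}$.
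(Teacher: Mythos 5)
Your setup is fine (the boundary values, the bijection $\Ca(S_n^3)\leftrightarrow\Ca(S_{n-1}^3)\times\{1,\dots,n\}$ by deleting the column carrying $n-1$ in the bottom row, and the column-by-column analysis of how insertion at position $p$ shifts levels are all correct, as is your numerical example showing non-uniformity), but the argument is not a proof: the entire content of the recurrence is the claim that the insertions aggregate to $(2k+1)R_{n-1,\,k}+(n-2k+1)R_{n-1,\,k-1}$, and this is precisely the step you leave open, offering only two unexecuted strategies (``an auxiliary statistic whose joint distribution restores uniformity'' or ``an explicit pairing between insertions of different base permutations''). In addition, the intermediate assertion that the change in the number of repeated levels is always $0$ or $+1$ is stated without justification; since the shifts at positions $j>p$ can both create and destroy repeated levels, this itself needs an argument. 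So there is a genuine gap, and you have in effect demonstrated (with the $021$ versus $201$ example) that your chosen decomposition cannot yield the recurrence by a direct per-$\Pi'$ count.

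The obstruction comes from the decomposition itself, and the paper sidesteps it by using a different operation from $\Ca(S_{n-1}^3)$ to $\Ca(S_n^3)$: either append the column $(n-1,n-1)^T$ at the end, or choose a column $(x,y)^T$ of $\Pi'$, overwrite its bottom entry to get $(x,n-1)^T$, and append the displaced value as the new last column $(n-1,y)^T$. No top-row entry is shifted, so every untouched column keeps its level verbatim; the two modified/new columns both sit at level $n-1$, and the net change in the number of repeated levels is $0$ or $+1$ according to whether the chosen column's level was repeated in $\Pi'$ (exactly $2k$ choices) or unique (exactly $n-1-2(k-1)$ choices), with the plain end-append giving one further level-preserving option. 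This yields the uniform per-$\Pi'$ counts $2k+1$ and $n-2k+1$ immediately, which is exactly what your position-insertion scheme fails to provide. To complete your route you would have to actually construct the pairing or auxiliary statistic you allude to; alternatively, replace the insertion by an operation, such as the paper's, under which the levels of all unaffected columns are invariant.
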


\begin{proof}
		Consider two elements at the same level $\ell$ as a pair. We refer to this pair as being on level~$\ell$.
		Then, there are three ways to obtain a permutation $\Pi\in \Ca(S_n^3)$ with exactly $k$ pairs from a permutation $\Pi'\in \Ca(S_{n-1}^3)$ by inserting two ``$n-1$" entries: \\[-3mm]
		
		\noindent
		{\bf Case 1:}
		If we insert the element $(n-1,n-1)^T$ at the end of the permutation~$\Pi'$, then the number
		of pairs of repeated levels remains unchanged, resulting in $R_{n-1,\,k}$ choices for selecting $\Pi'$. This contributes the term $R_{n-1,\,k}$ to the right-hand side of Equation~\eqref{eq-rec-R(n,k)}. \\[-3mm]
		
		\noindent
		{\bf Case 2:}
		Choose an element $(x,y)^T$ and replace it with $(x,n-1)^T$. Then insert the element $(n-1,y)^T$ at the end of the resulting permutation. There are two cases to choose the element~$(x,y)^T$: \\[-3mm]
		
		\noindent
	    \quad{\bf Case 2(a):}
		If the level of the chosen element $(x,y)^T$ is repeated in $\Pi'$, the insertion removes one pair on level $\max\{x,y\}$ and creates a new pair on level $n-1$, resulting in $\Pi'$ having exactly $k$ pairs.
		The number of choices for the element $(x,y)^T$ is $2k$.
		This contributes $2kR_{n-1,\,k}$ to the right-hand side of Equation~\eqref{eq-rec-R(n,k)}.  \\[-3mm]

		\noindent
		\quad{\bf Case 2(b):}
		If the element $(x,y)^T$ is at a unique level, then this insertion creates a new pair ``$(x,n-1)^T$ and $(n-1,y)^T$'', requiring $\Pi'$ to have exactly $k-1$ pairs. The number of choices for the element $(x,y)^T$ is $n-1-2(k-1)$.
		This contributes
		$(n-1-2(k-1))R_{n-1,\,k-1}$ to the right-hand side of Equation~\eqref{eq-rec-R(n,k)}. \\[-3mm]
		
		By combining the disjoint cases discussed above, we derive Equation~\eqref{eq-rec-R(n,k)}. Moreover, it is clear that $R_{n,\,0}=1$, and $R_{0,\,k}=0$.
\end{proof}

	The special cases for $k=1,2,3$, match sequences in the OEIS \cite{oeis}:
	\begin{equation*}
		\begin{array}{clc}
			R_{n,\,1}=&\frac{1}{4}(3^n-2n-1)&    \cite[A000340]{oeis} \\[8pt]
			R_{n,\,2}=&\frac{1}{16}\left(5^n-(2n-1)3^n+2n^2-2n-2\right)&    \cite[A000363]{oeis} \\[8pt]
			R_{n,\,3}=&\frac{1}{192}\left(3\cdot7^n-(6n-9)5^n+(6n^2-18n+3)3^n-4n^3+18n^2-8n-15\right)&  \cite[A000507]{oeis}
		\end{array}		
	\end{equation*}
	Note that for $k=1$, we provide the first known combinatorial interpretation for this sequence. Additionally, for $k=2$ and $k=3$, there are known combinatorial interpretations for the respective sequences. These interpretations correspond to the number of $n$-permutations with exactly 2 or 3 increasing runs of length at least 2, respectively. In general, $R_{n,\,k}$ is given by A008971 in \cite{oeis}, the triangle read by rows, where the $(n,k)$-th element is the number of $n$-permutations with $k$ increasing runs of length at least 2, see \cite{DB1962,MA2012} for the details. This follows easily from the fact (by inserting element $n$ in an $(n-1)$-permutation) noted in~\cite[A008971]{oeis} that the sequence satisfies exactly the same recurrence relation as Equation (\ref{eq-rec-R(n,k)}). A bijective proof of the noted connection can be readily obtained by matching Cases 1 and 2(a) with the insertion of $n$ at the beginning of the permutation or at the end of an increasing run, while matching Case 2(b) with the remaining possibilities to insert $n$.
	
\subsubsection{Generating functions}\label{subsec-R-GF}
We are now prepared to provide the explicit formula for the bivariate exponential generating function of $R_{n,\,k}$.
While Zhuang \cite{Zhuang2016} has provided a proof of this formula using Gessel's run theorem \cite{Gessel1977}, we demonstrate it by solving the associated partial differential equations.

Let
	\begin{align}\label{eq-def-R(x,y)}
		\mathcal{R}(x,y):=\sum_{n,\,k\ge0}\frac{R_{n,\,k}}{n!}x^ny^k,
	\end{align}
	and
	\begin{align*}
		\mathcal{R}_{*,\,k}(x):=\mathcal{R}(x,1)=\sum_{n\ge0}\frac{R_{n,\,k}}{n!}x^n,\text{ and }	
		\mathcal{R}_{n,\,*}(y):=\sum_{k\ge 0}R_{n,\,k}y^k.
	\end{align*}

In order to derive the explicit formula for $\mathcal{R}(x,y)$, we need to consider the recurrence relations of the generating functions of $R_{n,\,k}$ with $n$ or $k$ fixed.

	\begin{thm}\label{thm-R(*,k)-recurrence}
		For $k\ge 1$, we have
		\begin{align}\label{eq-rec-R(*,k)-int}
		\mathcal{R}_{*,\,k}(x)=(2k+1)\int_0^x \mathcal{R}_{*,\,k}(t) \,{\rm d} t+x\mathcal{R}_{*,\,k-1}(x)+(1-2k)\int_0^x \mathcal{R}_{*,\,k-1}(t)\,{\rm d}t,
		\end{align}	
		and
		\begin{align}\label{eq-R(*,0)}
			\mathcal{R}_{*,\,0}(x)=e^x.
		\end{align}
	\end{thm}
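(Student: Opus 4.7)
The plan is to convert the combinatorial recurrence of Theorem~\ref{thm-rec-R(n,k)} into a functional recurrence by the standard device of multiplying by $x^n/n!$ and summing. The key observation is that on the right-hand side of the recurrence, the factor $(2k+1)$ is a constant in $n$ and will produce an integral term (because dividing $R_{n-1,k}$ by $n!$ gives $R_{n-1,k}/(n!) = R_{n-1,k}/((n-1)! \cdot n)$, which integrates the series for $\mathcal{R}_{*,k}$), while the $n$-dependent factor $(n-2k+1) = n - (2k-1)$ splits into a multiplicative $x$-term and another integral term.

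Concretely, first I would verify the base case by noting that $R_{n,0}=1$ for all $n\ge 0$ immediately gives $\mathcal{R}_{*,0}(x)=\sum_{n\ge 0}x^n/n!=e^x$, which is Equation~\eqref{eq-R(*,0)}. For $k\ge 1$, I would use $R_{0,k}=0$ to write $\mathcal{R}_{*,k}(x)=\sum_{n\ge 1}R_{n,k}x^n/n!$ and substitute the recurrence \eqref{eq-rec-R(n,k)}, yielding
\[
\mathcal{R}_{*,k}(x)=(2k+1)\sum_{n\ge 1}\frac{R_{n-1,k}}{n!}x^n+\sum_{n\ge 1}\frac{(n-2k+1)R_{n-1,k-1}}{n!}x^n.
\]
Then I would handle each sum by an index shift $m=n-1$. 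The first sum becomes
\[
(2k+1)\sum_{m\ge 0}\frac{R_{m,k}}{(m+1)!}x^{m+1}=(2k+1)\int_0^x\mathcal{R}_{*,k}(t)\,{\rm d}t,
\]
since $\int_0^x t^m/m!\,{\rm d}t=x^{m+1}/(m+1)!$.

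For the second sum, I would split the coefficient as $(n-2k+1)=n-(2k-1)$. The $n$-part cancels one factorial, giving
\[
\sum_{n\ge 1}\frac{n\,R_{n-1,k-1}}{n!}x^n=\sum_{m\ge 0}\frac{R_{m,k-1}}{m!}x^{m+1}=x\,\mathcal{R}_{*,k-1}(x),
\]
while the $-(2k-1)$ part produces, by the same integration as above, the term $(1-2k)\int_0^x\mathcal{R}_{*,k-1}(t)\,{\rm d}t$. Summing the three pieces yields exactly Equation~\eqref{eq-rec-R(*,k)-int}. There is no serious obstacle here; the only thing to watch is the correct treatment of the $n=0$ boundary term (which vanishes for $k\ge 1$) and the bookkeeping of the index shift, so the proof is essentially a two-line generating-function calculation.
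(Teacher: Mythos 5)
Your proposal is correct and follows essentially the same route as the paper: substitute the recurrence of Theorem~\ref{thm-rec-R(n,k)} into the series for $\mathcal{R}_{*,\,k}(x)$, split $(n-2k+1)$ as $n-(2k-1)$, and recognize the shifted sums as $x\mathcal{R}_{*,\,k-1}(x)$ and integrals of $\mathcal{R}_{*,\,k}$ and $\mathcal{R}_{*,\,k-1}$, with the base case $\mathcal{R}_{*,\,0}(x)=e^x$ handled identically. No gaps.
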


	\begin{proof}
		For $k=0$, according to Theorem \ref{thm-rec-R(n,k)}, we have $R_{n,\,0}=1$. Therefore,
		\[\mathcal{R}_{*,\,0}(x)=\sum_{n\ge 0}\frac{x^n}{n!}=e^x.\]	
		For $k\ge 1$, using Equation~\eqref{eq-rec-R(n,k)}, we have

		\begin{align*}
			\mathcal{R}_{*,\,k}(x)&=\sum_{n\ge 1}\frac{1}{n!}\left((2k+1) R_{n-1,\,k}+(n-2k+1)R_{n-1,\,k-1} \right)x^n \nonumber \\
			&=(2k+1)\sum_{n\ge 1}\frac{R_{n-1,\,k}}{n!}x^n+\sum_{n\ge1}\frac{R_{n-1,\,k-1}}{(n-1)!}x^n+(1-2k)\sum_{n\ge 1}\frac{R_{n-1,\,k-1}}{n!}x^n \nonumber \\
			&=(2k+1)\int_0^x \mathcal{R}_{*,\,k}(t) \,{\rm d} t+x\mathcal{R}_{*,\,k-1}(x)+(1-2k)\int_0^x \mathcal{R}_{*,\,k-1}(t)\,{\rm d}t.
		\end{align*}
			
		\vspace{-0.85cm}
		
	\end{proof}
	
	By applying the above theorem, it is straightforward to obtain the following partial differential equation for $\mathcal{R}(x,y)$.

	\begin{thm}\label{thm-R(x,y)-recurrence}
		We have
		\begin{align}\label{eq-R(x,y)-recurrence}
			\mathcal{R}(x,y)-2(y-1)y \frac{\partial \mathcal{R}(x,y)}{\partial y}+(xy-1)\frac{\partial \mathcal{R}(x,y)}{\partial x}=0.
		\end{align}
	\end{thm}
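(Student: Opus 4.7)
The plan is to derive the PDE directly from the recurrence Equation (3.3) of Theorem 3.2 by packaging the family $\{\mathcal{R}_{*,k}(x)\}_{k \geq 0}$ into the bivariate series $\mathcal{R}(x,y) = \sum_{k \geq 0} \mathcal{R}_{*,k}(x) y^k$. Concretely, I multiply Equation (3.3) by $y^k$ and sum over $k \geq 1$, taking care of the $k = 0$ term separately via the initial condition $\mathcal{R}_{*,0}(x) = e^x$ from Equation (3.4).

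The key identities that will appear in this summation are
\begin{equation*}
\sum_{k \geq 0} (2k+1)\,a_k\, y^k = 2y\,\partial_y\!\left(\sum_{k \geq 0} a_k y^k\right) + \sum_{k \geq 0} a_k y^k,
\end{equation*}
applied to $a_k = \int_0^x \mathcal{R}_{*,k}(t)\,dt$, together with the index shift $\sum_{k \geq 1} \mathcal{R}_{*,k-1}(x)y^k = y\,\mathcal{R}(x,y)$. Swapping $\sum_k$ and $\int_0^x$ (justified by uniform convergence of power series on compact subsets of the disk of convergence) converts the first and third sums on the right of Equation (3.3) into $\int_0^x \partial_y \mathcal{R}(t,y)\,dt$ and $\int_0^x \mathcal{R}(t,y)\,dt$ with explicit coefficients. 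After combining like terms, the resulting integral equation I expect to obtain is
\begin{equation*}
(1-xy)\mathcal{R}(x,y) - 1 = 2y(1-y)\int_0^x \frac{\partial \mathcal{R}(t,y)}{\partial y}\,dt + (1-y)\int_0^x \mathcal{R}(t,y)\,dt,
\end{equation*}
where the constant $-1$ comes from $-e^x + 1$ after the $k = 0$ contribution $\mathcal{R}_{*,0}(x) = e^x$ is moved to the left-hand side.

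Differentiating both sides with respect to $x$ eliminates the integrals instantly and yields
\begin{equation*}
(1-xy)\frac{\partial \mathcal{R}}{\partial x} - y\,\mathcal{R} = 2y(1-y)\frac{\partial \mathcal{R}}{\partial y} + (1-y)\mathcal{R},
\end{equation*}
and collecting the $\mathcal{R}$ terms (since $-y - (1-y) = -1$) gives exactly Equation (3.4) after multiplication by $-1$. The main obstacle, such as it is, is purely bookkeeping: keeping track of the $k = 0$ boundary term and correctly handling the index shift in the factor $(1-2k)$, which after the shift $j = k-1$ becomes $-(2j+1)$ and produces the factor $-y$ that combines with $2y\partial_y$ to yield the clean coefficient $2y(1-y)$. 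No analytic subtleties arise beyond term-by-term differentiation of a convergent power series.
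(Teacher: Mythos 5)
Your derivation is correct and takes essentially the same route as the paper: multiply the integral recurrence \eqref{eq-rec-R(*,k)-int} by $y^k$, sum over $k\geq 1$ with the $k=0$ term $e^x$ handled separately, assemble the resulting integral equation, and differentiate in $x$ to obtain \eqref{eq-R(x,y)-recurrence}; your bookkeeping (including the constant term, which you track more carefully than the paper's displayed computation) checks out. The only slip is a reference label at the end: the target identity is \eqref{eq-R(x,y)-recurrence}, not Equation~\eqref{eq-R(*,0)}.
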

	
	\begin{proof}
		By the definition of $\mathcal{R}(x,y)$ and from Equations~\eqref{eq-rec-R(*,k)-int} and~\eqref{eq-R(*,0)}, we have
		\begin{align*}
			\mathcal{R}(x,y)=&\,e^x+\sum_{k\ge 1} \mathcal{R}_{*,\,k}(x)y^k\\
			=&\,e^x+\sum_{k\ge 1}\left(\int_0^x \mathcal{R}_{*,\,k}(t)\,{\rm d}t\right) y^k+\sum_{k\ge 1}2k\left(\int_0^x \mathcal{R}_{*,\,k}(t)\,{\rm d}t\right) y^k\\
			&+x\sum_{k\ge 1}\mathcal{R}_{*,\,k-1}(x)y^k+\sum_{k\ge 1}(1-2k)\left(\int_0^x \mathcal{R}_{*,\,k-1}(t)\,{\rm d}t\right) y^k\\
			=&\,e^x+\int_0^x \left(\sum_{k\ge1}\mathcal{R}_{*,\,k}(t)y^k\right){\rm d}t+2y\int_0^x \left(\sum_{k\ge 1}k\mathcal{R}_{*,\,k}(t)y^{k-1}\right) {\rm d}t+xy\sum_{k\ge 0}\mathcal{R}_{*,\,k}(x)y^k\\
			&-y\int_0^x \left(\sum_{k\ge 0}\mathcal{R}_{*,\,k}(t)y^k\right) {\rm d}t-2y^2\int_0^x \left(\sum_{k\ge 0}k\mathcal{R}_{*,\,k}(t)y^{k-1}\right){\rm d}t\\
			=&\,e^x+\left(\int_0^x\mathcal{R}(t,y)-e^x \right){\rm d}t+\int_0^x 2y\frac{\partial\mathcal{R}(t,y)}{\partial y}{\rm d}t+xy\mathcal{R}(x,y)\\
			&-y\int_0^x \mathcal{R}(t,y) \,{\rm d}t-2y^2\int_0^x \frac{\partial \mathcal{R}(t,y)}{\partial y}{\rm d}t\\
			=&\,(1-y)\left(2y\int_0^x \frac{\partial \mathcal{R}(t,y)}{\partial y}{\rm d}t+\int_0^x \mathcal{R}(t,y)\,{\rm d}t \right)+xy\mathcal{R}(x,y).
		\end{align*}
		By taking the partial derivative with respect to $y$ on both sides of the above equation, we obtain
		\[\frac{\partial \mathcal{R}(x,y)}{\partial x}=(1-y)\left(2y \frac{\partial \mathcal{R}(x,y)}{\partial y}+\mathcal{R}(x,y)\right)+y\mathcal{R}(x,y)+xy\frac{\partial\mathcal{R}(x,y)}{\partial x}, \]
		which is clearly equivalent to the desired equation.
	\end{proof}

	In fact, the partial differential equation~\eqref{eq-R(x,y)-recurrence} has a unique power series solution satisfying the given boundary conditions.
	
	\begin{lem}\label{lem-unique-sol}
		Let $\mathcal{F}_1(x)=\sum_{n\geq0}s_{n}x^n\in \mathbb{Q}[[x]]$ and $\mathcal{F}_2(y)=\sum_{k\geq0}s'_{k}y^k\in\mathbb{Q}[[y]]$ be two power series such that $s_0=s'_0$. Then the equation system
		\begin{equation}\label{eq-PDE}
		\left\{\begin{aligned}
		&\mathcal{F}(x,y)-2(y-1)y \frac{\partial \mathcal{F}(x,y)}{\partial y}+(xy-1)\frac{\partial \mathcal{F}(x,y)}{\partial x}=0,\\
		&\mathcal{F}(x,0)=\mathcal{F}_1(x),\quad\mathcal{F}(0,y)=\mathcal{F}_2(y)
		\end{aligned}\right.
		\end{equation}
		has a unique power series solution $\mathcal{F}(x,y)\in\mathbb{Q}[[x,y]]$.
	\end{lem}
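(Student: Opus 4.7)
The plan is to reduce the PDE in \eqref{eq-PDE} to a triangular recursion on the power series coefficients of $\mathcal{F}$ and then read off uniqueness directly. Writing
\[
\mathcal{F}(x,y) = \sum_{n,k \geq 0} a_{n,k}\, x^n y^k,
\]
I would substitute into the PDE and extract the coefficient of $x^n y^k$ from each of the five terms $\mathcal{F}$, $-2y^2 \partial_y \mathcal{F}$, $2y\, \partial_y \mathcal{F}$, $xy\, \partial_x \mathcal{F}$, $-\partial_x \mathcal{F}$. A short bookkeeping step should yield, for all $n, k \geq 0$, the recurrence
\[
(n+1)\, a_{n+1,\, k} \;=\; (2k+1)\, a_{n,\, k} \;+\; \bigl(n - 2(k-1)\bigr)\, a_{n,\, k-1},
\]
with the convention $a_{n,\,-1} = 0$. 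Under the normalization $a_{n,k} = R_{n,k}/n!$ this is precisely the combinatorial recurrence \eqref{eq-rec-R(n,k)} (after re-indexing $n \mapsto n+1$), which serves as a useful sanity check.

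The crucial feature of this recurrence is that it expresses $a_{n+1,\, k}$ \emph{explicitly} in terms of data from row $n$ alone. Hence, once the row $\{a_{0,\, k}\}_{k \geq 0}$ is known, every $a_{n,\, k}$ with $n \geq 1$ is determined by straightforward iteration on $n$. The boundary condition $\mathcal{F}(0,\, y) = \mathcal{F}_2(y)$ fixes this initial row as $a_{0,\, k} = s'_k$, so the entire array $\{a_{n,\, k}\}$ is uniquely pinned down, and any two formal power series solutions of \eqref{eq-PDE} sharing this $\mathcal{F}_2$ must therefore agree coefficient-by-coefficient. This yields the desired uniqueness in $\mathbb{Q}[[x,y]]$.

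It remains to see where the other boundary condition and the hypothesis $s_0 = s'_0$ play a role. Specializing the recurrence at $k = 0$ gives $(n+1)\, a_{n+1,\, 0} = a_{n,\, 0}$, hence $a_{n,\, 0} = a_{0,\, 0}/n! = s'_0/n!$. The compatibility $s_0 = s'_0$ then ensures that $a_{0,\, 0}$ is unambiguously defined and that the array produced from $\mathcal{F}_2$ is consistent with $\mathcal{F}(x,\, 0) = \mathcal{F}_1(x)$ (indeed, forcing $\mathcal{F}_1(x) = s_0\, e^x$, in agreement with \eqref{eq-R(*,0)}). The main obstacle I foresee is purely clerical: keeping track of the various index shifts when extracting coefficients from the five PDE terms, together with handling the edge cases $k \in \{0,1\}$ and $n = 0$ in the recurrence; once that bookkeeping is done, the remainder of the argument reduces to a transparent linear recursion.
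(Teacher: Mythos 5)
Your proposal is correct and takes essentially the same route as the paper: substitute a generic series $\mathcal{F}(x,y)=\sum_{n,k\ge0}a_{n,k}x^ny^k$, compare coefficients to obtain the triangular recurrence $na_{n,k}=(2k+1)a_{n-1,k}+(n-2k+1)a_{n-1,k-1}$ (your version is this after the shift $n\mapsto n+1$), and conclude that all coefficients are pinned down by the boundary data, giving uniqueness. The only difference is cosmetic: the paper states the recurrence for $n,k\ge1$ and imposes both boundary rows $a_{n,0}=s_n$ and $a_{0,k}=s'_k$, whereas you retain the $k=0$ coefficient equations and note they force $\mathcal{F}(x,0)=s_0e^x$ --- a sharper observation about when a solution actually exists, which the paper glosses over and which is harmless in the application since there $\mathcal{F}_1(x)=e^x$.
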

	\begin{proof}
		Assume that $\mathcal{F}(x,y)=\sum_{n,\,k\geq0}a_{n,\,k}x^ny^k$ is a power series solution of the equation system~\eqref{eq-PDE}. Then substituting it into the equation system~\eqref{eq-PDE}, we have
		\begin{equation*}
		\left\{\begin{aligned}
		&\sum_{n,\,k\geq0}a_{n,\,k}x^ny^k-2(y-1)y \sum_{n,\,k\geq0}a_{n,\,k}kx^ny^{k-1}+(xy-1)\sum_{n,\,k\geq0}a_{n,\,k}nx^{n-1}y^k=0,\\
		&\sum_{n\geq0}a_{n,0}x^n=\sum_{n\geq0}s_{n}x^n,\quad\sum_{k\geq0}a_{0,k}y^k=\sum_{k\geq0}s'_{k}y^k.
		\end{aligned}\right.
		\end{equation*}
		Comparing the coefficients of the power series, we obtain the following equivalent equation with respect to the sequence $(a_{n,\,k})_{n,\,k\geq0}$:
		\begin{equation}\label{eq-PDE-re}
		\left\{\begin{aligned}
		&na_{n,\,k}=(2k+1)a_{n-1,k}+(n-2k+1)a_{n-1,k-1},\quad n,k\geq1,\\
		&a_{n,0}=s_n,\quad n\geq0\\
		&a_{0,k}=s'_k,\quad k\geq0.
		\end{aligned}\right.
		\end{equation}
It is clear that the system of equations~\eqref{eq-PDE-re} has a unique sequence solution, and thus the system~\eqref{eq-PDE} has a corresponding unique power series solution.	\end{proof}
	
  Finally, we can derive an explicit formula for $\mathcal{R}(x,y)$.
  This formula provides an alternative form of
   \[p(x,y)=\frac{\sqrt{1-y}}{\sqrt{1-y}\cosh(x\sqrt{1-y})-\sinh(x\sqrt{1-y})},\]
   which was presented in \cite{Zhuang2016} as the exponential generating function for $n$-permutations with $k$ increasing runs of length at least $2$.
	
	\begin{thm}\label{thm-R(x,y)}
		We have
		\begin{align}\label{eq-R(x,y)}
			\mathcal{R}(x,y)=\frac{\sqrt{y-1}}{\sqrt{y-1}\cos(x\sqrt{y-1})-\sin(x\sqrt{y-1})}.
		\end{align}
	\end{thm}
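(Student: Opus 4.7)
The plan is to invoke the uniqueness result, Lemma~\ref{lem-unique-sol}, with boundary data extracted directly from the definition of $\mathcal{R}$. From $\mathcal{R}(x,y)=\sum_{n,k\geq 0}(R_{n,k}/n!)\,x^n y^k$ together with the initial values $R_{n,0}=1$ for $n\geq 0$ and $R_{0,k}=0$ for $k\geq 1$ recorded in Theorem~\ref{thm-rec-R(n,k)}, one reads off $\mathcal{R}(x,0)=e^x$ and $\mathcal{R}(0,y)=1$. Combined with Theorem~\ref{thm-R(x,y)-recurrence}, this identifies $\mathcal{R}$ as the unique formal power series solution of the system~\eqref{eq-PDE} with $\mathcal{F}_1(x)=e^x$ and $\mathcal{F}_2(y)=1$. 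It therefore suffices to verify that the proposed closed form $F(x,y)$ on the right-hand side of~\eqref{eq-R(x,y)} (i) represents a bona fide formal power series in $x,y$, (ii) satisfies $F(x,0)=e^x$ and $F(0,y)=1$, and (iii) satisfies the partial differential equation of Theorem~\ref{thm-R(x,y)-recurrence}.

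For (i) and (ii), the rewriting $\sqrt{y-1}=i\sqrt{1-y}$ together with $\cos(iz)=\cosh z$ and $\sin(iz)=i\sinh z$ converts the formula to
\[F(x,y)=\frac{\sqrt{1-y}}{\sqrt{1-y}\cosh(x\sqrt{1-y})-\sinh(x\sqrt{1-y})}=\frac{1}{\cosh(x\sqrt{1-y})-\sinh(x\sqrt{1-y})/\sqrt{1-y}},\]
and both $\cosh(x\sqrt{1-y})$ and $\sinh(x\sqrt{1-y})/\sqrt{1-y}$ involve only even powers of $\sqrt{1-y}$, hence are formal power series in $x$ and $y$. The denominator has constant term $1$ at $x=0$, so $F\in\mathbb{Q}[[x,y]]$. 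Evaluating at $x=0$ yields $F(0,y)=1$, and evaluating at $y=0$ (so $\sqrt{1-y}=1$) yields $F(x,0)=1/(\cosh x-\sinh x)=e^x$. Both boundary conditions match.

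For (iii), set $u=\sqrt{y-1}$ and $D(x,y)=u\cos(xu)-\sin(xu)$, so that $F=u/D$. A direct computation using $\partial_x D=-u(u\sin(xu)+\cos(xu))$ gives
\[\frac{\partial F}{\partial x}=\frac{u^2\bigl(u\sin(xu)+\cos(xu)\bigr)}{D^2},\]
and applying the chain rule via $\partial u/\partial y=1/(2u)$ to both factors in $D$ produces a comparable closed form for $\partial F/\partial y$. Substituting these into $F-2(y-1)y\,\partial_y F+(xy-1)\,\partial_x F$, clearing the common denominator $D^2$, and using the identifications $u^2=y-1$ and $y=u^2+1$, the left-hand side reduces to a linear combination of $\cos^2(xu)$, $\sin^2(xu)$, and $\sin(xu)\cos(xu)$, which vanishes after one application of the Pythagorean identity $\cos^2+\sin^2=1$. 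Together with (i) and (ii), Lemma~\ref{lem-unique-sol} then forces $\mathcal{R}=F$, giving~\eqref{eq-R(x,y)}.

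The main obstacle is the symbolic cancellation in step (iii): computing $\partial_y F$ produces several terms with nested factors of $u$, and careful bookkeeping is required when expanding $(xy-1)\partial_x F$ under the substitution $y=u^2+1$. Beyond that, the argument is mechanical once the reformulation of $F$ as a genuine element of $\mathbb{Q}[[x,y]]$ is in place.
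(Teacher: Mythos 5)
Your proposal is correct and follows essentially the same route as the paper: the paper's own (very terse) proof likewise combines the PDE of Theorem~\ref{thm-R(x,y)-recurrence} with the uniqueness statement of Lemma~\ref{lem-unique-sol} and verifies the boundary conditions $\mathcal{R}(x,0)=e^x$, $\mathcal{R}(0,y)=1$, with your write-up merely filling in the details (the reformulation via $\sqrt{1-y}$ showing $F\in\mathbb{Q}[[x,y]]$ and the explicit substitution into the PDE) that the paper leaves implicit. One cosmetic remark: after clearing $D^2$ the resulting expression is in fact linear in $\cos(xu)$ and $\sin(xu)$ and cancels identically, so the Pythagorean identity is not even needed, but this does not affect the validity of your argument.
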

	\begin{proof}
        Combining Theorems \ref{thm-R(x,y)-recurrence} and \ref{lem-unique-sol}, we can give an explicit formula for $\mathcal{R}(x,y)$ by solving Equation~\eqref{thm-R(x,y)-recurrence} and verifying the boundary conditions $\mathcal{R}(x,0)=e^x$ and $\mathcal{R}(0,y)=1$ directly.
	\end{proof}

\subsubsection{Real-rootedness}\label{subsec-R-zeros}
Now we discuss the {\em real-rootedness} of $\mathcal{R}_{n,\,*}(y)$. Let us review some definitions and results on the {\em generalized Sturm sequence}. Given two real-rooted polynomials $f(x)$ and $g(x)$ with nonnegative real coefficients,
let $\{r_{i}\}$ and $\{s_{j}\}$ be all roots of $f(x)$ and $g(x)$ in nonincreasing order, respectively. We say that $g(x)$ {\it interlaces} $f(x)$,
denoted by $g(x)\preccurlyeq f(x)$, if either $\deg f(x)=\deg g(x)=n$ and
$$s_{n}\leq r_{n}\leq s_{n-1}\leq r_{n-1}\leq \cdots\leq s_{1}\leq r_{1},$$
or $\deg f(x)=\deg g(x)+1=n$ and
$$r_{n}\leq s_{n-1}\leq r_{n-1}\leq \cdots\leq s_{1}\leq r_{1}.$$
Following Liu and Wang \cite{Liu-Wang-2006}, let $a\preccurlyeq bx+c$ for any real numbers $a,b,c$, and let $f(x)\preccurlyeq 0$ and  $0\preccurlyeq f(x)$ for any real-rooted polynomial $f(x)$. Moreover, we say that $f(x)$ is {\em standard} if $f(x)=0$ or its leading coefficient is positive.
Let $\{f_{n}(x)\}_{n\geq 0}$ be a sequence of standard polynomials. If for all $n\geq 0$, we have $f_{n}(x)\preccurlyeq f_{n+1}(x)$ and $f_n(x)$ is real-rooted, then $\{f_{n}(x)\}_{n\geq 0}$ is said to be a {\it generalized Sturm sequence}. For surveys on this topic, we refer readers to Stanley~\cite{Stanley-log-1989}, Brenti \cite{Brenti-1994}, and Br\"and\'en \cite{Branden-2015}.

Liu and Wang \cite{Liu-Wang-2006} provided a unified criterion for determining whether a polynomial sequence is a generalized Sturm sequence, and it is useful to prove the real-rootedness of a polynomial.

\begin{lem}\label{thm-lw}
	{\upshape(\cite{Liu-Wang-2006}, Corollary 2.4)} Let $\{f_{n}(x)\}_{n\geq 0}$ be a sequence of polynomials with nonnegative  coefficients with $\deg f_{n}(x)=\deg f_{n-1}(x)$ or $\deg f_{n-1}(x)+1$, which satisfies the following conditions:
	\begin{itemize}
		\item[{\rm (a)}] $f_{0}(x)$ and $f_{1}(x)$ are real-rooted polynomials with $f_{0}(x)\preccurlyeq f_{1}(x)$.
		
		\item[{\rm (b)}] There exist polynomials $a_{n}(x), b_{n}(x), c_{n}(x)$ with real coefficients such that
		\[f_{n}(x)=a_{n}(x)f_{n-1}(x)+b_{n}(x)f'_{n-1}(x)+c_{n}(x)f_{n-2}(x).\]
	\end{itemize}
	If for all $x\leq 0$, we have $b_{n}(x)\leq 0$ and $c_{n}(x)\leq 0$, then $\{f_{n}(x)\}_{n\geq 0}$ forms a generalized Sturm sequence.
\end{lem}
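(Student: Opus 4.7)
I plan to proceed by induction on $n$, maintaining at step $n$ the claim that $f_n(x)$ is real-rooted, standard, and satisfies $f_{n-1}(x)\preccurlyeq f_n(x)$. The base case $n=1$ is exactly hypothesis~(a). For the inductive step, assume $f_{n-2}\preccurlyeq f_{n-1}$ with both polynomials real-rooted, standard, and with nonnegative coefficients. Since the coefficients of $f_{n-1}$ are nonnegative and its leading coefficient is positive, every real root of $f_{n-1}$ lies in $(-\infty,0]$, which is exactly where the sign hypotheses on $b_n$ and $c_n$ apply.

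List the (necessarily simple) roots of $f_{n-1}$ as $r_1>r_2>\cdots>r_m$. Evaluating the three-term recurrence at $r_i$ kills the $a_n$ term and leaves
\[f_n(r_i)=b_n(r_i)\,f'_{n-1}(r_i)+c_n(r_i)\,f_{n-2}(r_i).\]
A direct computation from $f_{n-1}(x)=c\prod_j(x-r_j)$ with $c>0$ gives $\mathrm{sign}(f'_{n-1}(r_i))=(-1)^{i-1}$, and the interlacing $f_{n-2}\preccurlyeq f_{n-1}$ combined with the positivity of the leading coefficient of $f_{n-2}$ forces $\mathrm{sign}(f_{n-2}(r_i))=(-1)^{i-1}$ as well (with potential zeros at interlacing coincidences). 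Since $r_i\le 0$ we have $b_n(r_i),c_n(r_i)\le 0$, so both summands carry sign $(-1)^i$ and hence $\mathrm{sign}(f_n(r_i))=(-1)^i$. The intermediate value theorem then supplies a root of $f_n$ in each of the $m-1$ intervals $(r_{i+1},r_i)$, already interlacing the interior of the root set of $f_{n-1}$.

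The remaining one or two roots of $f_n$ are located by sign analysis at the endpoints. The behaviour of $f_n$ as $x\to\pm\infty$ is controlled by $\deg f_n$ and the positivity of its leading coefficient (inherited from standardness of $f_{n-1}$ and $f_{n-2}$); combined with $\mathrm{sign}(f_n(r_1))\le 0$ and the fact that $f_n$ has nonnegative coefficients (so every real root is $\le 0$), this pins down an extra root in $(r_1,0]$ when $\deg f_n=m$, and an additional root in $(-\infty,r_m)$ when $\deg f_n=m+1$. Assembling these root locations produces $f_{n-1}\preccurlyeq f_n$ in both degree regimes, which closes the induction.

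The main obstacle I foresee is not the sign analysis itself but the uniform bookkeeping needed to handle the two possible degree regimes $\deg f_n\in\{\deg f_{n-1},\deg f_{n-1}+1\}$ together with the degenerate conventions $a\preccurlyeq bx+c$ and $0\preccurlyeq f$, plus the possibility that some of the interlacing inequalities for the roots of $f_{n-2}$ are equalities, which makes certain $f_n(r_i)$ vanish identically. Verifying that such coincidences still propagate as weak interlacing, rather than collapsing into a multiple root that would spoil real-rootedness at the next step, is the delicate point of Liu and Wang's argument.
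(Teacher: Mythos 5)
The paper does not prove this lemma at all: it is imported verbatim from Liu and Wang (\cite{Liu-Wang-2006}, Corollary~2.4) and used as a black box in the proof of Theorem~\ref{thm-Rn(x)}, so there is no internal proof to compare your attempt with. Judged on its own, your plan follows the standard route (and essentially the one Liu and Wang themselves take): induct on $n$, evaluate the recurrence at the roots of $f_{n-1}$ so the $a_n$ term drops out, read off the signs of $b_n f'_{n-1}+c_n f_{n-2}$ using $b_n,c_n\leq 0$ on $(-\infty,0]$, where all roots lie, and place roots of $f_n$ by the intermediate value theorem.

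However, as written the argument has genuine gaps, and they are precisely the points you defer at the end. First, the roots of $f_{n-1}$ are not ``necessarily simple'': a real-rooted polynomial with nonnegative coefficients can have multiple roots (e.g.\ $(x+1)^2$), and interlacing $f_{n-2}\preccurlyeq f_{n-1}$ is weak, so at a repeated root $r_i$ both $f'_{n-1}(r_i)$ and $f_{n-2}(r_i)$ may vanish; your sign computation $\mathrm{sign}(f'_{n-1}(r_i))=(-1)^{i-1}$ and the alternation of $f_{n-2}$ then fail, and $f_n(r_i)$ can be $0$ without any information. Second, even in the simple-root case all the signs you obtain are only weak inequalities ($f_n(r_i)$ of sign $(-1)^i$ \emph{or zero}), so the intermediate value theorem does not by itself produce a root in each open interval $(r_{i+1},r_i)$, nor does the endpoint analysis pin down the extra root(s); one needs either a perturbation/limiting argument, or to factor out common roots, or a careful case analysis of where the zeros of $f_n$ coincide with those of $f_{n-1}$ --- this bookkeeping is the actual content of Liu and Wang's proof. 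So your proposal correctly identifies the strategy but leaves the decisive step unproved; to make it complete you would have to carry out the degenerate-case analysis rather than flag it as ``the delicate point.''
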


\begin{thm}\label{thm-Rn(x)}
	The polynomial sequence $\{\mathcal{R}_{n,\,*}(y)\}_{n\ge 0}$ is a generalized Sturm sequence. In particular, the polynomials $\mathcal{R}_{n,\,*}(y)$ are real-rooted.
\end{thm}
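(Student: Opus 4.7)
The plan is to invoke Liu and Wang's criterion (Lemma~\ref{thm-lw}) with $f_n(y) := \mathcal{R}_{n,\,*}(y)$. The first task is to translate the combinatorial recurrence~\eqref{eq-rec-R(n,k)} into a differential recurrence for the generating polynomials. Multiplying~\eqref{eq-rec-R(n,k)} by $y^k$ and summing over $k$, the contribution from $(2k+1)R_{n-1,\,k}$ assembles into $f_{n-1}(y)+2y f'_{n-1}(y)$, while the contribution from $(n-2k+1)R_{n-1,\,k-1}$, after the index shift $k\mapsto k+1$, becomes $(n-1)y f_{n-1}(y)-2y^2 f'_{n-1}(y)$. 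Adding these yields
\[
f_n(y) \;=\; \bigl[1+(n-1)y\bigr] f_{n-1}(y) \;+\; 2y(1-y)\, f'_{n-1}(y),
\]
which matches the three-term recurrence required by Lemma~\ref{thm-lw} with $a_n(y)=1+(n-1)y$, $b_n(y)=2y(1-y)$, and $c_n(y)\equiv 0$.

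The next step is to verify the remaining hypotheses of Lemma~\ref{thm-lw}. Each $f_n(y)$ has nonnegative coefficients since $R_{n,\,k}\ge 0$. For $y\le 0$ one has $1-y\ge 1>0$, hence $b_n(y)=2y(1-y)\le 0$, and $c_n(y)\equiv 0$ trivially satisfies the sign condition. The base cases $f_0(y)=f_1(y)=1$ are constants, hence real-rooted, and $f_0\preccurlyeq f_1$ holds by the interlacing conventions recalled before Lemma~\ref{thm-lw}. For the degree condition, observe that if $\deg f_{n-1}=k$ with leading coefficient $\ell$, then the coefficient of $y^{k+1}$ in $f_n(y)$ is exactly $(n-1-2k)\ell$; an easy induction then shows that $\deg f_n=\lfloor n/2\rfloor$, so $\deg f_n$ equals either $\deg f_{n-1}$ or $\deg f_{n-1}+1$.

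With all hypotheses verified, Lemma~\ref{thm-lw} immediately implies that $\{f_n(y)\}_{n\ge 0}$ forms a generalized Sturm sequence, and in particular each $\mathcal{R}_{n,\,*}(y)$ is real-rooted. The only nontrivial step is the clean derivation of the differential recurrence from~\eqref{eq-rec-R(n,k)}, and specifically the correct index shift on the $R_{n-1,\,k-1}$ term; once that recurrence is in place, the sign conditions on $b_n$ and $c_n$ are transparent and the degree bookkeeping is mechanical, so I do not expect any substantive obstacle beyond this initial manipulation.
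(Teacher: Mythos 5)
Your proposal is correct and takes essentially the same route as the paper: you derive the identical differential recurrence $\mathcal{R}_{n,\,*}(y)=\bigl((n-1)y+1\bigr)\mathcal{R}_{n-1,\,*}(y)+2y(1-y)\mathcal{R}_{n-1,\,*}'(y)$ and invoke Lemma~\ref{thm-lw} with the same $a_n$, $b_n$, $c_n$, checking the degree hypothesis by leading-coefficient bookkeeping, where the paper instead verifies $R_{n,\,k}>0$ if and only if $0\le k\le\lceil\frac{n-1}{2}\rceil$ (your value $\deg \mathcal{R}_{n,\,*}(y)=\lfloor n/2\rfloor=\lceil\frac{n-1}{2}\rceil$ is the correct one). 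The only point to make explicit in your ``easy induction'' is the odd-$n$ case, where the coefficient of $y^{k+1}$ you computed vanishes, so one should also note that the coefficient of $y^{k}$ in $f_n$ is positive, which is immediate from $R_{n,\,k}\ge(2k+1)R_{n-1,\,k}>0$ for $k=\deg f_{n-1}$ since both terms of the recurrence are nonnegative there.
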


\begin{proof}

Firstly, from Equation~\eqref{eq-rec-R(n,k)}, we can derive the following recurrence relation for $\mathcal{R}_{n,\,*}(y)$:
\begin{equation}\label{R{n,}(y)}
	\mathcal{R}_{n,\,*}(y) = \left((n-1)y + 1\right)\mathcal{R}_{n-1,\,*}(y) - 2y(y-1)\mathcal{R}_{n-1,}'(y), \quad \text{for } n \geq 1,
\end{equation}
and the initial conditions are $\mathcal{R}_{0,\,*}(y) = \mathcal{R}_{1,\,*}(y) = 1$. Now by Lemma~\ref{thm-lw}, we only need to show that $\deg \mathcal{R}_{n,\,*}(y) = \left\lfloor \frac{n-1}{2} \right\rfloor$ by verifying that $R_{n,\,k}>0$ if and only if $0\leq k \leq \lceil \frac{n-1}{2}\rceil$.

We proceed by induction on $n\ge 1$. The base case is trivial since $\mathcal{R}_{1,\,*}(y)=1$.
For $n>1$, assume that $R_{n-1,\,k}>0$ if and only if $0\leq k \leq \lceil \frac{n-2}{2}\rceil$. By Equation~\eqref{eq-rec-R(n,k)}, we have $R_{n,\,k}\geq (n-2k+1)R_{n-1,\,k-1}>0$ for $0\leq k\leq  \lceil \frac{n-1}{2}\rceil$. If $k>\lceil \frac{n-1}{2}\rceil+1$, then $k>k-1>\lceil \frac{n-1}{2}\rceil\geq\lceil \frac{n-2}{2}\rceil$, so $R_{n-1,\,k}=R_{n-1,\,k-1}=0$, implying that $R_{n,\,k}=0$. Now, assume that $k=\lceil \frac{n-1}{2}\rceil+1$. Then $R_{n-1,\,k}=0$. If $n$ is odd, then $k=\frac{n-1}{2}+1$, and thus $n-2k+1=0$. If $n$ is even, then $k=\frac{n}{2}+1$, so $k-1=\frac{n}{2}>\frac{n-2}{2}=\lfloor \frac{n-2}{2}\rfloor$, implying that $R_{n-1,\,k-1}=0$. Therefore, $R_{n,\,\lceil \frac{n-1}{2}\rceil+1}=0$, and this completes the proof.
\end{proof}
Note that Ma \cite{MA2012} also presented Equation \eqref{R{n,}(y)} and obtained the real-rootedness of the relevant polynomials.

\subsection{Weakly increasing permutations in $S_n^3$}\label{WI-perms}
This subsection is devoted to presenting some enumeration results for weakly increasing permutations in $S_n^3$. Notably, we present two proofs to show that these permutations can be counted by the Springer numbers.

We begin by providing the relevant definitions.
A {\em weakly increasing permutation} is defined as a permutation that avoids the pattern $21$  (equivalently, as a permutation that avoids the pattern  $\underline{21}$).
Let
$$\WI_n:=|S^3_n(21)|=|\{\Pi\in S^3_n :\Pi \mbox{ is weakly increasing}\}|.$$
We next present the following theorem, which establishes a connection between weakly increasing permutations and permutations in $S_n^3$ with $k$ levels repeated.

\begin{thm}\label{WI-rec-relations}
	For $n\geq0$, we have
	\begin{equation}\label{relation-WInk-Rnk}
		\WI_{n}=\sum_{k\ge 0}2^k R_{n,\,k}.
	\end{equation}
\end{thm}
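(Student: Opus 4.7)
The plan is to construct a map $\phi \colon S_n^3(21) \to \Ca(S_n^3)$ from weakly increasing permutations to canonical permutations, and to prove that the fiber over any $\Pi' \in \Ca(S_n^3)$ with exactly $k$ repeated levels has cardinality $2^k$; summing fiber sizes then immediately yields $\WI_n = \sum_{k \ge 0} 2^k R_{n,\,k}$.

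To define $\phi$, given a weakly increasing $\Pi = (\pi^2, \pi^3)$, I would let $\sigma \in S_n$ be the unique permutation with $\pi^2_{\sigma(i)} = i-1$ for all $i$, and set $\phi(\Pi) := (\Pi_{\sigma(1)}, \ldots, \Pi_{\sigma(n)})$. Permuting columns preserves the property that each row is a permutation, and the second row of $\phi(\Pi)$ is $01\cdots(n-1)$ by construction, so $\phi(\Pi)$ indeed lies in $\Ca(S_n^3)$. A quick unwinding shows that $\phi(\Pi) = \Pi'$ if and only if $\Pi$ is a column-reordering of $\Pi'$, that is, $\Pi = (\Pi'_{\rho(1)}, \ldots, \Pi'_{\rho(n)})$ for some $\rho \in S_n$ (namely $\rho = \sigma^{-1}$).

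Next, for a fixed $\Pi' \in \Ca(S_n^3)$ with column levels $L_i := \levMax(\Pi'_i)$, the fiber $\phi^{-1}(\Pi') \cap S_n^3(21)$ consists of those column-reorderings of $\Pi'$ that are themselves weakly increasing. This condition translates exactly into requiring $\rho$ to sort the level sequence non-decreasingly, so the fiber size equals the standard multinomial count $\prod_{\ell} m_\ell!$, where $m_\ell := |\{i : L_i = \ell\}|$.

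The key structural ingredient is then that every level of a canonical permutation is attained by at most two columns, i.e.\ $m_\ell \in \{0, 1, 2\}$. Indeed, the equation $\max(i-1, \pi^3_i) = \ell$ forces either $i-1 = \ell$ (pinning $i = \ell+1$) or $\pi^3_i = \ell$ (pinning $i$ to the unique position of $\ell$ in $\pi^3$), so at most two admissible columns exist. Consequently $\prod_\ell m_\ell! = 2^{|\{\ell : m_\ell = 2\}|} = 2^{k(\Pi')}$, and summing over $\Pi' \in \Ca(S_n^3)$ gives the desired identity. The only delicate point is the $m_\ell \le 2$ observation; everything else is routine bookkeeping about column permutations and sortings of multisets.
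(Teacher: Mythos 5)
Your proof is correct and is essentially the paper's argument read in the opposite direction: the paper constructs each weakly increasing permutation from a canonical one by choosing the order within each of the $k$ repeated-level pairs ($2^k$ ways) and then sorting all columns by level, while you canonicalize by sorting columns along the $\pi^2$-row and count the fiber over each canonical permutation as $\prod_{\ell} m_\ell! = 2^k$. Your explicit check that each level is attained by at most two columns is a detail the paper leaves implicit (it underlies the notion of ``pairs'' in the definition of $R_{n,\,k}$), so your write-up is just a more detailed rendering of the same bijective correspondence.
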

\begin{proof}
	It is clear that any weakly increasing permutation can be obtained from a canonical permutation counted by $R_{n,\,k}$ by fixing the order of elements at the same level in $2^k$ ways and then arranging all elements in weakly increasing order. Summing over all possible $k$, we obtain Equation~(\ref{relation-WInk-Rnk}), as desired.
\end{proof}

	Let
	\[\mathcal{W}(x):=\sum_{n\ge 0}\frac{\WI_n}{n!}x^n\]
	be the exponential generating function for weakly increasing permutations. We can derive an explicit formula for $\mathcal{W}(x)$ as follows.
	
	\begin{thm}\label{thm-W}
		We have
		\begin{align}\label{eq-W}
			\mathcal{W}(x)=\frac{1}{\cos(x)-\sin(x)}.
		\end{align}
	\end{thm}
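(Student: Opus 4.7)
The plan is to reduce the claim to a single substitution in the bivariate generating function $\mathcal{R}(x,y)$ whose closed form was already established in Theorem~\ref{thm-R(x,y)}. Specifically, I will show that $\mathcal{W}(x) = \mathcal{R}(x,2)$ and then evaluate the right-hand side of \eqref{eq-R(x,y)} at $y = 2$, where the square roots collapse to $1$ and the formula simplifies immediately to $1/(\cos(x) - \sin(x))$.

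Concretely, I would proceed as follows. First, I invoke Theorem~\ref{WI-rec-relations}, which gives $\WI_n = \sum_{k \ge 0} 2^k R_{n,k}$. Dividing by $n!$ and summing over $n$, and interchanging the two sums (justified because we are working with formal power series in $x$ with the inner sum over $k$ finite for each fixed $n$), yields
\begin{equation*}
\mathcal{W}(x) = \sum_{n \ge 0} \frac{x^n}{n!} \sum_{k \ge 0} 2^k R_{n,k} = \sum_{n,k \ge 0} \frac{R_{n,k}}{n!} x^n \cdot 2^k = \mathcal{R}(x,2).
\end{equation*}
Next, I substitute $y = 2$ in the explicit formula from Theorem~\ref{thm-R(x,y)}. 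Since $\sqrt{y-1} = 1$ at $y=2$, the formula becomes
\begin{equation*}
\mathcal{R}(x,2) = \frac{1}{\cos(x) - \sin(x)},
\end{equation*}
which is exactly \eqref{eq-W}.

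There is essentially no obstacle here: the only thing to be careful about is the legitimacy of specializing the formal power series $\mathcal{R}(x,y)$ at $y = 2$, but since $\mathcal{R}(x,y) \in \mathbb{Q}[[x,y]]$ is a power series in which the coefficient of each $x^n$ is a polynomial in $y$ (of degree $\lfloor (n-1)/2 \rfloor$ by the argument in the proof of Theorem~\ref{thm-Rn(x)}), evaluation at any real $y$ is well-defined and gives the expected formal power series in $x$. Thus the substitution is valid and the proof is complete.
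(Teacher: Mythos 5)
Your proposal is correct and follows essentially the same route as the paper: use Theorem~\ref{WI-rec-relations} to identify $\mathcal{W}(x)=\mathcal{R}(x,2)$ and then specialize the closed form from Theorem~\ref{thm-R(x,y)} at $y=2$, where $\sqrt{y-1}=1$. Your extra remark justifying the substitution $y=2$ via the polynomiality (in $y$) of each coefficient of $x^n$ is a sensible addition, but the argument is otherwise identical to the paper's proof.
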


	\begin{proof}
		By Equation~\eqref{relation-WInk-Rnk}, we have
		\[\mathcal{W}(x)=\sum_{n\ge 0}\sum_{k\geq 0}\frac{2^kR_{n,\,k}}{n!}x^n=\mathcal{R}(x,2),\]
		which yields Equation~\eqref{eq-W} by using Equation~\eqref{eq-R(x,y)}.
	\end{proof}

The exponential generating function~\eqref{eq-W} of the numbers $\WI_n$ corresponds to that for sequence A001586 in~\cite{oeis}. This shows that weakly increasing permutations are counted by the {\em Springer numbers}~$\mathcal{S}_n$~\cite{Gla1898,Gla1914,Springer1971}, thus providing another combinatorial interpretation for these numbers.

Most recently, Han et al.~\cite{HKZ2024} proved a conjecture by Callan that the number of up-down permutations of even length fixed under reverse and complement are given by the Springer numbers. Examples of such permutations are 1302, 251403 and 46570213. This combinatorial interpretation can be seen as one of the easiest to define. The approach taken to prove Callan's conjecture in \cite{HKZ2024} involved deriving the following relationship between the Springer numbers and the {\em Euler numbers}:
\begin{equation}\label{Springer-Euler}
	\mathcal{S}_n=\sum_{k=0}^{n-1}{n-1\choose k}2^kE_k\mathcal{S}_{n-k-1},
\end{equation}
where the Euler numbers $E_n$ are defined by the exponential generating function
\begin{equation}\label{E_n}
	E(x):=\sum_{n\ge 0}E_n\frac{x^n}{n!}=\sec x+\tan x.
\end{equation}

In the rest of this subsection, we provide another proof of Theorem~\ref{thm-W} by deriving the recurrence relation~(\ref{eq-rec-WI}), similar to the relation~(\ref{Springer-Euler}). For this purpose, we introduce the following definitions. Given a permutation $\pi=\pi_1\pi_2\cdots \pi_n \in S_n^2$, a {\it peak} in $\pi$ is an index $i$ such that $\pi_{i-1}<\pi_i$ and $\pi_{i+1}<\pi_i$, for $2\le i \le n-1$. Let
\[P_{n,\,k}=|\{\pi\in S_n^2 : \pi \text{ has exactly $k$ peaks}\}|,  \]
and let
\[\mathcal{P}(x,y):=\sum_{n,\,k\ge 0}\frac{P_{n,\,k}}{n!}x^ny^k. \]
After simplifying the original expression for $\mathcal{P}(x,y)$ in \cite[Corollary 23]{Sergey-2006}, we obtain the following lemma.
\begin{lem}\label{lem-gf-peak}{\upshape{(\cite{Sergey-2006})}}
We have
\begin{align}\label{eq-P(x,y)}
\mathcal{P}(x,y)=\frac{\sqrt{y-1}}{\sqrt{y-1}-\tan(x\sqrt{y-1})}.
\end{align}
\end{lem}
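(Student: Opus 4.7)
The plan is to invoke Corollary~23 of~\cite{Sergey-2006}, which supplies an explicit but less compact closed form for $\mathcal{P}(x,y)$, and then to reduce that expression by trigonometric identities to the stated form $\frac{\sqrt{y-1}}{\sqrt{y-1}-\tan(x\sqrt{y-1})}$. Under the substitution $z = x\sqrt{y-1}$ the target becomes $z/(z-\tan z)$ (up to a common factor of $\sqrt{y-1}$), so the task reduces to recognizing this closed form hidden inside the cited expression.

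First, I would rewrite the formula from~\cite{Sergey-2006} in the variable $z = x\sqrt{y-1}$, expressing every occurrence of $\sin$, $\cos$, $e^{iz}$, $e^{-iz}$ there in a single common basis (either all exponential or all trigonometric). The decisive step is to divide numerator and denominator of the resulting rational expression by $\cos z$, which introduces $\tan z$ explicitly in the denominator; the target form should then emerge by straightforward cancellation. Throughout the manipulation I treat everything as a formal power series in $x$ and $y$: this is legitimate because $\tan z/z$ and $1/\cos z$ are both even in $z$, so $\tan(x\sqrt{y-1})/\sqrt{y-1}$ is a genuine element of $\mathbb{Q}[[x,y]]$ and the choice of branch of $\sqrt{y-1}$ is irrelevant.

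As consistency checks I would evaluate the proposed formula at two boundary values of $y$. Setting $y\to 1$ gives $\tan(x\sqrt{y-1})/\sqrt{y-1}\to x$, hence $\mathcal{P}(x,1) = 1/(1-x) = \sum_{n\ge 0} n!\,x^n/n!$, which correctly counts all permutations when peaks are not weighted. Setting $y=0$ yields $\sqrt{y-1} = i$ and $\tan(ix) = i\tanh x$, so the formula collapses to $1/(1-\tanh x)$, the well-known exponential generating function for peakless permutations. Both checks confirm that the compact form is the intended simplification.

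The only real obstacle is bookkeeping: one must carefully track the factors of $\sqrt{y-1}$ when converting between the exponential and trigonometric representations in the cited formula, and avoid introducing spurious sign errors when passing from $\sin/\cos$ to $\tan$. No new combinatorial input is needed — the entire proof is an algebraic rewriting of a known generating function — so the lemma should be presented as a short computation immediately following the citation.
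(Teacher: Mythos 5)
Your proposal takes essentially the same route as the paper: the paper's entire justification is likewise to cite Corollary~23 of the referenced work and state that the compact form \eqref{eq-P(x,y)} follows by simplifying that expression, which is exactly the algebraic rewriting you outline, and your boundary checks at $y=1$ (giving $1/(1-x)$) and $y=0$ (giving $1/(1-\tanh x)=(1+e^{2x})/2$, the EGF for peakless permutations) are both correct. One minor slip worth fixing: under $z=x\sqrt{y-1}$ the target is $\frac{1}{1-\tan z/\sqrt{y-1}}$, i.e.\ $z/(z-x\tan z)$ after clearing denominators, not $z/(z-\tan z)$, but this is only a motivational remark and does not affect the argument.
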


By applying the above lemma, we derive the following recurrence relation for the numbers $\WI_n$. Considering the relation~(\ref{Springer-Euler}), this provides an alternative proof of Theorem~\ref{thm-W}.

\begin{thm}
For $n\ge 1$, we have
\begin{equation}\label{eq-rec-WI}
\WI_n=\sum_{k=0}^{n-1}\binom{n-1}{k}2^kE_k\WI_{n-k-1}.
\end{equation}
\end{thm}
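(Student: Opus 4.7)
The plan is to translate the target recurrence (3.24) into a first-order linear ODE for $\mathcal{W}(x)$, verify it using Lemma~\ref{lem-gf-peak} together with Theorem~\ref{thm-W} via trigonometric identities, and then combine with (\ref{Springer-Euler}) to conclude.

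First, multiplying both sides of (3.24) by $x^{n-1}/(n-1)!$ and summing over $n \geq 1$, the left-hand side becomes $\mathcal{W}'(x)$ while the right-hand side is the EGF convolution of $E(2x)$ with $\mathcal{W}(x)$, so the recurrence is equivalent to
\[
\mathcal{W}'(x) = E(2x)\,\mathcal{W}(x).
\]
Using double-angle identities, I would then rewrite
\[
E(2x) = \sec 2x + \tan 2x = \frac{1+\sin 2x}{\cos 2x} = \frac{(\cos x + \sin x)^2}{(\cos x - \sin x)(\cos x + \sin x)} = \frac{\cos x + \sin x}{\cos x - \sin x},
\]
which upon dividing numerator and denominator by $\cos x$ equals $(1+\tan x)/(1-\tan x) = (1+\tan x)\,\mathcal{P}(x,2)$, invoking Lemma~\ref{lem-gf-peak} at $y=2$.

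To verify the ODE, I would apply Theorem~\ref{thm-W}: $\mathcal{W}(x) = 1/(\cos x - \sin x)$ yields $\mathcal{W}'(x)/\mathcal{W}(x) = (\cos x + \sin x)/(\cos x - \sin x)$, which equals $E(2x)$ by the identity above. Extracting coefficients of $x^{n-1}/(n-1)!$ then recovers (3.24). Since (3.24) matches (\ref{Springer-Euler}) and $\WI_0 = \mathcal{S}_0 = 1$, induction gives $\WI_n = \mathcal{S}_n$ for all $n$; combined with the standard closed form $1/(\cos x - \sin x)$ for the Springer EGF, this yields an alternative proof of Theorem~\ref{thm-W}.

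The main obstacle is genuine logical independence: the verification step as above invokes Theorem~\ref{thm-W}, so to make the implication ``(3.24) plus (\ref{Springer-Euler}) $\Rightarrow$ Theorem~\ref{thm-W}'' non-circular one must prove (3.24) without assuming the closed form. The cleanest route is a combinatorial decomposition of weakly increasing 3-dimensional permutations paralleling the Han--Kitaev--Zhang argument for (\ref{Springer-Euler}): mark a distinguished initial block of $k$ positions carrying an alternating-type structure (contributing the $E_k$ factor together with the $2^k$ coming from the two rows below $\pi^1$), while the remaining $n-k-1$ positions form a smaller weakly increasing 3-dimensional permutation. Alternatively, one can derive the ODE analytically by specializing the PDE (\ref{eq-R(x,y)-recurrence}) for $\mathcal{R}(x,y)$ to $y=2$ and using the identity $\mathcal{W}(x) = \mathcal{R}(x,2)$ together with an auxiliary series to eliminate the $\partial_y$ term; this is the most delicate step since a single specialization of the PDE produces a relation that still involves $\partial_y\mathcal{R}|_{y=2}$.
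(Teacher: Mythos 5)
Your executed argument does prove the displayed recurrence: the reduction of \eqref{eq-rec-WI} to the ODE $\mathcal{W}'(x)=E(2x)\mathcal{W}(x)$ is a correct EGF translation, the identity $E(2x)=\frac{\cos x+\sin x}{\cos x-\sin x}$ is right, and since Theorem~\ref{thm-W} is already established earlier in the paper (via $\mathcal{W}(x)=\mathcal{R}(x,2)$ and Theorem~\ref{thm-R(x,y)}), invoking it is not formally circular for the bare statement. But this inverts the logic of the paper: the whole point of this theorem, as stated in the text, is to give a second, independent proof of Theorem~\ref{thm-W} by comparing \eqref{eq-rec-WI} with \eqref{Springer-Euler}, so the paper's proof deliberately avoids the closed form. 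You correctly flag this tension yourself, but the non-circular route you sketch is where the genuine gap lies. Your analytic fallback (specializing the PDE \eqref{eq-R(x,y)-recurrence} at $y=2$) is acknowledged to be stuck on the surviving $\partial_y\mathcal{R}|_{y=2}$ term, and your combinatorial sketch does not match the structure that actually works: the distinguished part of a weakly increasing $\Pi\in S_n^3(21)$ is not an ``initial block of $k$ positions,'' and the factor $2^kE_k$ does not come directly from an alternating structure on $k$ positions with a $2^k$ from the two rows.

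What the paper does instead: it splits $\Pi$ by \emph{element membership} (not by position) into the chain $(n-1,a_1)^T,(a_1,a_2)^T,\ldots,(a_k,n-1)^T$ through level $n-1$ --- whose elements are interleaved with the rest in the weakly increasing order --- and the complementary elements, which form a smaller weakly increasing permutation counted by $\WI_{n-k-1}$ after relabeling; uniqueness uses that elements at a common level cannot be separated. The nontrivial count is that the number of such chains on a fixed $k$-set $\{a_1,\ldots,a_k\}$ is $\sum_{\ell}P_{k,\ell}2^{\ell+1}$, where $\ell$ is the number of peaks of the word $a_1a_2\cdots a_k$ (each peak, plus the level-$(n-1)$ pair, creates a tie whose internal order can be chosen in $2$ ways), and then one needs the identity $\sum_{\ell=0}^{k}P_{k,\ell}2^{\ell+1}=2^kE_k$, proved exactly by the double-angle computation you perform together with Lemma~\ref{lem-gf-peak} at $y=2$, i.e.\ $\mathcal{P}(x,2)=\frac{1}{1-\tan x}$. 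So your trigonometric ingredients are the right ones, but they must be deployed to count chains via peaks, not to verify an ODE against the closed form you are trying to re-derive; without the chain decomposition and the peak identity, the proposal does not yield the recurrence independently of Theorem~\ref{thm-W}.
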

\begin{proof}
	
	For each permutation $\Pi \in S_n^3(21)$, we can decompose it uniquely into two fragments. One fragment is of the following form:
	\begin{align}\label{eq-collection-a_i}
		(n-1,a_1)^T,\,(a_1,a_2)^T,\,(a_2,a_3)^T,\ldots,(a_{k-1},a_k)^T,\,(a_k,n-1)^T,
	\end{align}
	where the $a_i$'s are distinct numbers. If $k=0$, then this fragment consists of exactly one element: $(n-1,n-1)^T$.
	The other fragment consists of the remaining elements of $\Pi$.
	Both fragments are in the same order as they appear in the  original permutation $\Pi$. Note that the decomposition is unique and reversible since no two elements on the same level can be in different fragments. We need to prove the following claim:

\noindent
{\bf Claim:}
	Given a positive integer $k$, the number of possible
	fragments of the form \eqref{eq-collection-a_i} is
	\[\sum_{\ell=0}^k P_{k,\,\ell}2^{\ell+1}. \]

	{\noindent \bf Proof of Claim.} We identify the fragment in \eqref{eq-collection-a_i} with a permutation $\pi=a_1a_2\cdots a_k \in S_k^2$. Then, for $2\le i\le k-1$, the collection in \eqref{eq-collection-a_i} contains a pair on level $a_i$ if and only if $a_{i-1}<a_i$ and $a_{i+1}<a_i$, which indicates that $i$ is a peak in $\pi$.
	Thus, if $\pi$ has $\ell$ peaks, then the fragment in \eqref{eq-collection-a_i} contains exactly $\ell+1$ pairs of elements at the same level. Moreover, there are $2^{\ell +1}$ ways to determine the internal sorting of these elements. Therefore, the number of ways to form a fragment of the form \eqref{eq-collection-a_i} is $\sum_{\ell=0}^kP_{k,\,\ell}2^{\ell+1}$, as desired.

In the recurrence relation~\eqref{eq-rec-WI}, $k$ denotes the number of $a_i$'s in the first fragment, and those entries $a_1,\,a_2,\ldots,a_k$ can be chosen in $\binom{n-1}{k}$ ways. For the second fragment, there are $\WI_{n-k-1}$ choices,  since we can first choose a weakly increasing permutation of length $n-k-1$ and then replace $i$ in this permutation, $0\leq i\leq n-k-2$, with the $i$-th smallest element in $\{0,1,\ldots,n-2\}\backslash\{a_1,\ldots,a_k\}$. Thus, it remains to show that for $k\geq1$,
	\[\sum_{\ell=0}^k P_{k,\,\ell}2^{\ell+1}=2^k E_k, \]
	by using the above claim.

	On the one hand, by substituting $t=2x$ in Equation~\eqref{E_n}, we obtain that
	\begin{align*}
		\sum_{k\ge 1}E_k2^k\frac{x^k}{k!}&=\tan(2x)+\sec(2x)-1\\[-5pt]
        &=\frac{2\tan(x)}{1-\tan^2(x)}+\frac{1-\cos^2(x)+\sin^2(x)}{\cos^2(x)-\sin^2(x)}\\
		&=\frac{2\tan(x)}{1-\tan^2(x)}+\frac{2\tan^2(x)}{1-\tan^2(x)}\\
		&=\frac{2\tan(x)}{1-\tan(x)}.
	\end{align*}

On the other hand, by applying Lemma \ref{lem-gf-peak}, we get
	\begin{align*}
       \sum_{k\ge 1}\left(\sum_{\ell=0}^kP_{k,\,\ell}2^{\ell +1}\right)\frac{x^k}{k!}=2(\mathcal{P}(x,2)-1)=\frac{2}{1-\tan(x)}-2=\frac{2\tan(x)}{1-\tan(x)},
	\end{align*}
	which coincides with the exponential generating function for $E_k2^k$ and completes the proof.
\end{proof}

\subsection{Unimodal permutations in $S_n^3$}\label{unimodal-sec}

This subsection is devoted to the study of the total number of unimodal permutations in $S_n^3$. More preciously, we establish a connection between this number and $R_{n,\,k}$, and further derive the precise formula for its exponential generating function using the results in Subsection~\ref{k-lev-rep-sec}.

A {\em unimodal permutation} is a permutation whose vector of levels begins with several (possibly none) ascents, followed by several (possibly none) descents. Examples of unimodal permutations include:
	$$\left(\begin{array}{ccccc}3&4&2&0&1\\ 1&4&3&2&0\\ \end{array}\right),\  \left(\begin{array}{ccccc}0&1&2&3&4\\ 0&1&2&3&4\\ \end{array}\right)
	\mbox{ and }\left(\begin{array}{ccccccc}1&4&5&6&3&2&0\\ 0&2&3&6&5&4&1\\ \end{array}\right).$$
	
Let
	\[ U_n=|\{\Pi\in S_n^3 : \Pi \text{ is unimodal} \}|,\]
	and set $U_0=0$. Then $U_n$ can be expressed in terms of the numbers $R(n,k)$, as stated in the next theorem.
	\begin{thm}
		For $n\geq 1$, we have
		\begin{equation}\label{eq-U(n)}
			U_n=\sum_{k\geq0} 2^{n-k-1}R_{n-1,\,k}.
		\end{equation}
\end{thm}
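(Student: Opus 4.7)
The plan is to prove the identity by exhibiting a bijection between unimodal permutations in $S_n^3$ and pairs consisting of a canonical permutation in $\Ca(S_{n-1}^3)$ together with an admissible side-assignment for its columns. The starting point is the observation that any unimodal $\Pi\in S_n^3$ has a unique peak column equal to $(n-1,n-1)^T$: the maximum possible level is $n-1$, achieved by any column containing the value $n-1$ in either $\pi^2$ or $\pi^3$, and uniqueness of the peak in a unimodal level sequence forces both occurrences of $n-1$ to coincide in that column.

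With the peak column removed, the remaining object $\Pi'$ has length $n-1$, both of its rows are permutations of $\{0,1,\ldots,n-2\}$, and its level sequence is strictly increasing then strictly decreasing (all levels at most $n-2$). Sorting the columns of $\Pi'$ by their top entry yields a canonical permutation $\tilde\Pi\in\Ca(S_{n-1}^3)$ with the same multiset of columns; conversely, from $\tilde\Pi$ together with a choice of ``left'' or ``right'' side for each of its columns, one reconstructs $\Pi'$ by placing the left-chosen columns in increasing order of level, followed by the right-chosen columns in decreasing order of level, and then inserting $(n-1,n-1)^T$ between the two blocks. A side-assignment is admissible when no level appears twice on the same side, since otherwise that side would fail to be strictly monotone.

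The counting step is to show that a fixed $\tilde\Pi\in\Ca(S_{n-1}^3)$ with exactly $k$ repeated levels admits $2^{n-k-1}$ such admissible side-assignments: each of the $n-1-2k$ columns at a unique level is assigned a side freely, contributing a factor of $2^{n-1-2k}$, while the two columns comprising each repeated-level pair must be split between the sides with a free binary choice of which column goes left, contributing a factor of $2^k$. The product is $2^{n-1-2k}\cdot 2^k=2^{n-k-1}$, and summing over all $\tilde\Pi$ grouped by $k$ gives the asserted identity. The main delicate point is to verify that the side-splitting constraint makes the correspondence both well-defined and invertible; this rests on the structural fact that a repeated level $\ell$ in a canonical permutation decomposes into one column of the form $(\ell,y)^T$ with $y<\ell$ and one column of the form $(x,\ell)^T$ with $x<\ell$, so the two columns of any repeated-level pair are always distinguishable and the binary choice is genuine.
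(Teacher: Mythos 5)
Your proposal is correct and is essentially the paper's own argument: both identify the forced peak column $(n-1,n-1)^T$, reduce the remaining $n-1$ columns to a canonical permutation counted by $R_{n-1,\,k}$, and count placements as $2^k$ (each repeated-level pair split across the two sides) times $2^{n-1-2k}$ (free side choice for unique-level columns). Your version merely makes the bijection and its admissibility condition explicit, which is a more careful write-up of the same decomposition; the only (harmless) slip is calling the level sequence of $\Pi'$ strictly increasing then strictly decreasing, since a plateau can occur at the junction after the peak is removed, but your argument never uses that claim.
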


\begin{proof}
Note that any unimodal permutation in $S_n^3$ must include the element $\Pi_i=(n-1,n-1)^T$, for some $i$, $1\le i\le n$. As in the proof of Theorem \ref{thm-rec-R(n,k)}, we treat two elements at the same level as a pair. To determine the number of unimodal permutations $\Pi$ in $S_n^3$, we count the possible arrangements of $n-1$ elements to the left and to the right of $\Pi_i$.

	Suppose that the number of pairs of elements at the same level is $k$, with $k\ge 0$.
	Then, we need to arrange the two elements of each pair on opposite sides of the element $\Pi_i$, providing us with $2^{k}$ choices. Following this, we need to decide how to arrange the remaining $n-1-2k$ elements, which gives us $2^{n-1-2k}$ choices for their placements. In this way, to obtain $n-1$ elements with exactly $k$ pairs, we first select a set of elements with $k$ pairs that can form a permutation $\Pi\in S_{n-1}^3$ in $R_{n-1,\,k}$ ways. Thus, the total count becomes $2^{n-k-1} R_{n-1,\,k}$.
	
	Summing over all possible $k$, we obtain Equation~\eqref{eq-U(n)}.
\end{proof}

	Let
	\[U(x):= \sum_{n\ge0}\frac{U_n}{n!}x^n.\]
Then we can derive an explicit formula for $U(x)$ by applying the above theorem together with Theorem \ref{thm-R(x,y)}.
	
	\begin{thm}\label{thm-U(x)}
		We have
		\begin{align}\label{eq-U(x)}
			U(x)=\frac{1}{\sqrt{2}}\left({\rm arcsinh}\left(\frac{1}{\sinh\left({\rm arcsinh}(1)-\sqrt{2}x\right)}\right) -{\rm arcsinh}(1)\right).
		\end{align}
	\end{thm}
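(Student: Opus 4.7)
The plan is to differentiate $U(x)$, use Equation~\eqref{eq-U(n)} to identify the derivative as a specialization of the known $\mathcal{R}(x,y)$, then integrate back.

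First I would form the exponential generating function of the sequence $(U_{m+1})_{m\geq 0}$: since
\[
U'(x)=\sum_{m\ge 0}\frac{U_{m+1}}{m!}x^m,
\]
substituting $U_{m+1}=\sum_{k\ge 0}2^{m-k}R_{m,\,k}$ from Equation~\eqref{eq-U(n)} and splitting the factor $2^{m-k}=2^m\cdot 2^{-k}$ gives
\[
U'(x)=\sum_{m,k\ge 0}\frac{R_{m,\,k}}{m!}(2x)^m\Bigl(\tfrac12\Bigr)^k=\mathcal{R}\bigl(2x,\tfrac12\bigr).
\]
Thus $U$ is the antiderivative of $\mathcal{R}(2x,1/2)$ vanishing at $0$.

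Next I would specialize Theorem~\ref{thm-R(x,y)} at $y=1/2$. Here $\sqrt{y-1}=i/\sqrt{2}$, and using $\cos(iz)=\cosh(z)$, $\sin(iz)=i\sinh(z)$ the imaginary factors cancel and yield
\[
U'(x)=\mathcal{R}\bigl(2x,\tfrac12\bigr)=\frac{1}{\cosh(\sqrt{2}\,x)-\sqrt{2}\sinh(\sqrt{2}\,x)}.
\]
The key algebraic observation is that, since $\sinh(\operatorname{arcsinh}(1))=1$ and $\cosh(\operatorname{arcsinh}(1))=\sqrt{2}$, the subtraction formula for $\sinh$ gives
\[
\sinh\bigl(\operatorname{arcsinh}(1)-\sqrt{2}\,x\bigr)=\cosh(\sqrt{2}\,x)-\sqrt{2}\sinh(\sqrt{2}\,x),
\]
so that $U'(x)=1/\sinh\bigl(\operatorname{arcsinh}(1)-\sqrt{2}\,x\bigr)$.

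Finally I would integrate. Setting $u=\operatorname{arcsinh}(1)-\sqrt{2}\,x$ and using the antiderivative $\int\operatorname{csch}(u)\,du=-\operatorname{arcsinh}(1/\sinh u)+C$ (which is checked in one line by differentiating the right-hand side), we obtain
\[
U(x)=\frac{1}{\sqrt{2}}\operatorname{arcsinh}\!\left(\frac{1}{\sinh\bigl(\operatorname{arcsinh}(1)-\sqrt{2}\,x\bigr)}\right)+C.
\]
The constant is fixed by $U(0)=0$, giving $C=-\operatorname{arcsinh}(1)/\sqrt{2}$, which matches Equation~\eqref{eq-U(x)}.

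There is no serious obstacle; the only step requiring a moment of care is recognizing the identity $\cosh(\sqrt{2}x)-\sqrt{2}\sinh(\sqrt{2}x)=\sinh(\operatorname{arcsinh}(1)-\sqrt{2}x)$, which puts the rather ungainly expression from the specialization of $\mathcal{R}$ into the clean form needed to integrate to an $\operatorname{arcsinh}$. Everything else is bookkeeping with exponential generating functions and standard hyperbolic calculus.
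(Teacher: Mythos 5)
Your proposal is correct and follows essentially the same route as the paper: both express $U$ via the specialization $\mathcal{R}(2x,\tfrac12)$ using Equation~\eqref{eq-U(n)}, convert the trigonometric form of Theorem~\ref{thm-R(x,y)} to hyperbolic functions, collapse the denominator to $\sinh\bigl(\operatorname{arcsinh}(1)-\sqrt{2}x\bigr)$ via the addition formula, and then obtain the closed form by an antiderivative checked through differentiation together with the initial value $U(0)=0$. The only cosmetic difference is that you work with $U'(x)$ directly while the paper writes $U(x)=\int_0^x\mathcal{R}(2t,\tfrac12)\,\mathrm{d}t$, which is the same computation.
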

	
	\begin{proof}
		Using Equation~(\ref{eq-U(n)}), we have
		\begin{align*}
			U(x)&=\sum_{n\ge 1}\frac{1}{n!}\sum_{k\ge0}2^{n-k-1}R_{n-1,\,k}x^n\\[5pt]
			&=\sum_{n\ge 1}\sum_{k\ge 0}\frac{1}{2 n!}R_{n-1,\,k}(2x)^n\left( \frac{1}{2}\right)^k\\[5pt]
			&=\int_0^x \mathcal{R}\left(2t,\frac{1}{2}\right){\rm d}t.
		\end{align*}
		Using the expression for $\mathcal{R}(x,y)$ in Equation~\eqref{eq-R(x,y)}, we have
		\begin{align*}
			U(x)&=\int_0^x \frac{i}{i\cos (\sqrt{2}t i)-\sqrt{2}\sin(\sqrt{2}t i)}{\rm d}t\\[5pt]
			&=\int_0^x \frac{1}{\cos (\sqrt{2}t i)+\sqrt{2}\,i\sin(\sqrt{2}t i)}{\rm d}t\\[5pt]
			&=\int_0^x \frac{1}{\cosh (\sqrt{2}t )-\sqrt{2}\sinh(\sqrt{2}t)}{\rm d}t\\[5pt]
			&=\int_0^x \frac{1}{\sinh({\rm arcsinh}(1)-\sqrt{2}t)}{\rm d}t,
		\end{align*}
		where the third and fourth equalities hold since
		\[\cos(it)=\cosh(t),\quad \sin(it)=i\sinh(t), \text{ and }\sinh(t+s)=\sinh(t)\cosh(s)+\cosh(t)\sinh(s).\]
		Let $T={\rm arcsinh}(1)-\sqrt{2}t$, then we have
		\begin{align*}
			U(x)&=-\frac{1}{\sqrt{2}}\int_0^x \frac{\cosh(T)}{\sinh^2(T)\sqrt{\cosh^2(T)/\sinh^2(T)}}{\rm d}\,T\\
			&=-\frac{1}{\sqrt{2}}\int_0^x \frac{\cosh(T)}{\sinh^2(T)\sqrt{1/\sinh^2(T)+1}}{\rm d}\,T.
		\end{align*}
		On the other hand, if we let
		\[F(x)=\frac{1}{\sqrt{2}}\left({\rm arcsinh}\left(\frac{1}{\sinh\left({\rm arcsinh}(1)-\sqrt{2}x\right)}\right) -{\rm arcsinh}(1)\right), \]
		then by differentiating $F(x)$, we obtain
		\[F'(x)=\frac{\cosh\left({\rm arcsin}(1)-\sqrt{2}x\right)}{\sinh^2\left({\rm arcsin}(1)-\sqrt{2}x\right)\sqrt{1/\sinh^2\left({\rm arcsin}(1)-\sqrt{2}x\right)+1}}.\]
		From the above derivation and verification of the initial values (i.e., $F(0)=U(0)$), we conclude that $U(x)=F(x)$. This completes the proof.
	\end{proof}
	By Theorem \ref{thm-U(x)}, we can deduce that the numbers $U_n$ correspond to sequence A104018 in \cite{oeis}, thereby providing the first combinatorial interpretation for this sequence.

\subsection{Hoe permutations in $S_n^d$}\label{hoe-section}
In this subsection we consider a subclass of $d$-dimensional unimodal permutations, which we call {\em hoe permutations}.

We define $\Pi \in S_n^d$ as a \emph{hoe permutation} if $\Pi$ is unimodal and has exactly one descent at position $n-1$. For instance, the permutations
\[0124563\in S^2_7,\ \left(\begin{array}{ccccc}1&0&2&4&3\\ 0&2&3&4&1\\ \end{array}\right)\in S_5^3\mbox{ and }\left(\begin{array}{cccc}1&0&3&2\\ 0&2&3&1\\1&2&3&0 \end{array}\right)\in S_4^4
\]
are all hoe permutations. Note that any such $\Pi$ must contain the element $(n-1,\ldots,n-1)^T$.
Let
$$H_{n,\,d}= |\{ \Pi\in S_n^d : \Pi \text{ is a hoe permutation} \}|.$$
We can then derive the following explicit formula for $H_{n,\, d}$ in a combinatorial way.
\begin{thm}
	For $n\geq 2$ and $d\geq 2$, we have
	\begin{equation}\label{eq-H(n,d)}
		H_{n,\,d}=\sum_{k=0}^{n-2}\left(2^{d-1}-1 \right)^k .
	\end{equation}
\end{thm}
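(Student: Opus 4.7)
The plan is to prove the formula by induction on $n$ via the recurrence
$$H_{n,d} = H_{n-1,d} + (2^{d-1}-1)^{n-2},$$
starting from the base case $H_{2,d}=1$ (whose unique hoe permutation is $\Pi_1=(1,\ldots,1)^T$, $\Pi_2=(0,\ldots,0)^T$).

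First I would show that any hoe permutation $\Pi\in S_n^d$ must satisfy $\Pi_{n-1}=(n-1,\ldots,n-1)^T$. Since the level sequence strictly increases up to position $n-1$ and then undergoes a strict descent, $\level(\Pi_{n-1})$ attains the maximum possible value $n-1$; moreover, in each row, the value $n-1$ cannot lie at any column $i<n-1$ (otherwise $\level(\Pi_i)\ge n-1$ would violate strict increase) nor at column $n$ (otherwise $\level(\Pi_n)\ge n-1$ would destroy the descent), so every row places $n-1$ at column $n-1$. To prove the recurrence I would then split on $\level(\Pi_1)$. If $\level(\Pi_1)=0$ then $\Pi_1=(0,\ldots,0)^T$, and deleting this column while subtracting $1$ from every remaining entry is a bijection onto hoe permutations in $S_{n-1}^d$ (the new final level $\level(\Pi_n)-1\ge 0$ because value $0$ is already used at column~1 in every row), contributing $H_{n-1,d}$. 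Otherwise $\level(\Pi_1)\ge 1$, in which case the strictly increasing chain of $n-1$ levels ending at $n-1$ with first term at least $1$ is forced to be $1,2,\ldots,n-1$, so $\level(\Pi_i)=i$ for every $1\le i\le n-1$.

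To count this second case, I would introduce the auxiliary quantity $f(m)$: the number of $(d-1)\times m$ matrices whose rows are permutations of $\{0,\ldots,m-1\}$ with $\level(\text{col }i)=i$ for $1\le i\le m-1$ and column~$m$ unconstrained. Upon removing the peak column the Case~B count equals $f(n-1)$, and I would prove $f(m)=(2^{d-1}-1)^{m-1}$ by induction on $m$, with the trivial base $f(1)=1$. For $m\ge 2$, value $m-1$ in any row can only lie in column $m-1$ or column $m$ (as every earlier column has a smaller level), and the non-empty set $A\subseteq\{1,\ldots,d-1\}$ of rows placing $m-1$ in column $m-1$ admits exactly $2^{d-1}-1$ choices. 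For each fixed $A$ I would merge columns $m-1$ and $m$ into a single column by keeping, in each row, the unique entry of that pair which differs from $m-1$; this produces a matrix satisfying the reduced problem of size $m-1$, and unmerging via $A$ inverts the construction. The main obstacle is checking that this merge/unmerge is well-defined and respects all level constraints in both directions; granting this, the multiplicative step yields $f(m)=(2^{d-1}-1)\,f(m-1)=(2^{d-1}-1)^{m-1}$, so Case~B contributes $(2^{d-1}-1)^{n-2}$ and the recurrence is proved.
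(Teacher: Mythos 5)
Your argument is correct, and it works, but it proceeds along a genuinely different route from the paper. The paper deletes/inserts the forced peak column $(n-1,\ldots,n-1)^T$ and classifies the permutation of length $n-1$ that remains according to the pattern (ascent, descent, or plateau) formed by its two rightmost columns; this yields the recurrence $H_{n,d}=1+(2^{d-1}-1)H_{n-1,d}$, which is then solved as a geometric progression — with $d=2$ treated separately, since the closed-form manipulation there divides by $2^{d-1}-2$. You instead split on whether the first column is the all-zero column, obtaining $H_{n,d}=H_{n-1,d}+(2^{d-1}-1)^{n-2}$, and count the second case directly through the auxiliary quantity $f(m)$, peeling off the value $m-1$ (which can only sit in the last two columns) to get $f(m)=(2^{d-1}-1)f(m-1)$; the merge/unmerge step you flag as the main obstacle does go through, since in every row exactly one of the two rightmost entries equals $m-1$, the level constraints on columns $1,\ldots,m-2$ are untouched, and the nonemptiness of $A$ is exactly the condition $\level(\text{col } m-1)=m-1$, so the pair $(A,N)$ is recovered from the unmerged matrix. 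Your approach buys a uniform treatment of all $d\geq 2$ and produces each term $(2^{d-1}-1)^{k}$ of the sum as the count of a concrete subclass (hoe permutations whose first column has level exactly $n-1-k$ being forced down the chain), i.e., a refinement of the identity; the paper's approach is shorter once the insertion idea is in hand, since a single three-way case analysis on the last two columns gives the whole recurrence at once.
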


\begin{proof}
	We first consider the case of $d=2$, where a $2$-dimensional hoe permutation of length $n$ is just an ordinary permutation with exactly one descent at position $n-1$. There are $n-1$ choices to obtain this, coinciding with Equation~\eqref{eq-H(n,d)}.
	
	Next, for $d \ge 3$, we claim that the numbers $H_{n,\,d}$ satisfy the following recurrence relation:
	\begin{equation}\label{eq-recurrence-H(n,d)}
		H_{n,\,d}=1+(2^{d-1}-1)H_{n-1,\,d}.
	\end{equation}
	In fact, to obtain a hoe permutation $\Pi$ in $S_n^d$, we can insert the element $C=(n-1,\ldots,n-1)^T$ between the two rightmost elements of a permutation $\Pi'$ in $S_{n-1}^d$, where the first $n-2$ elements are strictly increasing. It is now sufficient to count the number of such $\Pi'$'s in $S_{n-1}^d$. Note that there are three cases for the patterns formed by the two rightmost elements in $\Pi'$'s. If they form an ascent, then $\Pi'$ is strictly increasing, which counts as $1$. If they form a descent, then $\Pi'$ is a hoe permutation, counted by $H_{n-1,\,d}$. Otherwise, these elements must form a plateau. Since all occurrences of ``$n-2$" must be in the two rightmost elements of $\Pi'$, such a permutation can be obtained from a hoe permutation by freely swapping the two entries of the two rightmost elements within the same rows (excluding the cases of swapping the entries in all rows or not swapping at all). Thus there are $(2^{d-1}-2)H_{n-1,\,d}$ such $\Pi'$'s in this case. Combining these three cases, we obtain Equation~\eqref{eq-recurrence-H(n,d)}.

	Rewriting Equation~\eqref{eq-recurrence-H(n,d)} as
	\begin{equation}\label{eq-recurrence-H(n,d)-2}
		H_{n,\,d}+\frac{1}{2^{d-1}-2}=(2^{d-1}-1)\left( H_{n-1,\,d}+\frac{1}{2^{d-1}-2}\right),
	\end{equation}
	then $\{H_{n,\,d}+1/(2^{d-1}-2)\}_{n\geq2}$ is a geometric progression with the initial value $H_{2,d}+1/(2^{d-1}-2)=1+1/(2^{d-1}-2)$. After simple computation, we have
	\begin{align*}
		H_{n,\,d}
		=\frac{(2^{d-1}-1)^{n-1}-1}{\left(2^{d-1}-1 \right)-1},
	\end{align*}
	which coincides with Equation~\eqref{eq-H(n,d)}, and this completes the proof.
\end{proof}

The following special cases match sequences in the OEIS \cite{oeis}:
\begin{equation}
	\begin{matrix}
		d=3: & H_{n,3}=\frac{3^{n-1}-1}{2}   & \cite[A003462]{oeis} \\[6pt]
		d=4: & H_{n,4}=\frac{7^{n-1}-1}{6}   & \cite[A023000]{oeis} \\[6pt]
		d=5: & H_{n,5}=\frac{15^{n-1}-1}{14} & \cite[A135518]{oeis} \\[6pt]
		d=6: & H_{n,6}=\frac{31^{n-1}-1}{30} & \cite[A218734]{oeis}
	\end{matrix}		
\end{equation}
Notably, for $d=4,5,6$, we provide the first known combinatorial interpretations for the respective sequences. Additionally, one of the known combinatorial interpretations in A003462 is related to the number of triangles in {\em Sierpinski's triangle}, while in Subsection \ref{c-bounded-perms-sec}, we will introduce the numbers $|M^{+3}_{3,n}|$, which is linked in \cite[A002023]{oeis} to Sierpinski's tetrahedron (i.e., to the number of edges in the $(n-2)$-Sierpinski tetrahedron graph).

\subsection{$(d-1)$-plateaux among permutations in $S_n^d$}\label{sec-plaeatu-levMax}

In various parts of the permutation patterns literature, researchers are interested in the total number of occurrences of a specific pattern in all permutations (often under certain restrictions) in  $S_n$. For example, see \cite{BurEli, FriMan} and references therein. In this subsection, we derive the explicit formula for the total number of $(d-1)$-plateaux among all permutations in $S_n^d$.

\begin{thm}\label{distr-k-plateau-all-2}
	Let $n,\,d\geq2$. If $n\geq d-1$, then the total number of $(d-1)$-plateaux (occurrences of $\underline{11\ldots1}$ with $d-1$ $1$'s) among all permutations in $S^d_n$ is given by
	\begin{equation}\label{total-occ-k-plateau-1}
		(n-d+2)(d-1)!\left((n-d+1)!\right)^{d-1}\sum_{\ell=0}^{n-1} \left(A(\ell,d-2) \right)^{d-1},
	\end{equation}
	where $A(m,k)$ represents the {\em arrangement number}: $$A(m,k)=\left\{\begin{array}{ll}
	\frac{m!}{(m-k)!},& \mbox{if }0\leq k\leq m;\\
	0, & \text{otherwise.}\\
	\end{array}\right.$$
\end{thm}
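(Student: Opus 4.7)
The plan is to apply linearity of counting: the total number of $(d-1)$-plateaux across all $\Pi\in S_n^d$ equals $\sum_{i=1}^{n-d+2}N_i$, where $N_i$ is the number of permutations $\Pi$ for which $\levMax(\Pi_i)=\levMax(\Pi_{i+1})=\cdots=\levMax(\Pi_{i+d-2})$. I will prove that $N_i$ is independent of $i$, which together with the outer sum accounts for the factor $(n-d+2)$ in the stated formula.

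To compute $N_i$, I stratify over the common value $\ell\in\{0,1,\ldots,n-1\}$ shared by the $d-1$ window columns. The crucial step is the following counting argument on the free rows $\pi^2,\ldots,\pi^d$. Since each $\pi^r$ is a permutation of $\{0,1,\ldots,n-1\}$, the value $\ell$ occurs exactly once per free row, and hence exactly $d-1$ times in the array of free entries. For the maximum of each of the $d-1$ window columns to equal $\ell$, the value $\ell$ must appear in every one of those columns; and since no free row can contribute more than one copy of $\ell$, the $d-1$ occurrences of $\ell$ must realize a bijection from the $d-1$ free rows onto the $d-1$ window columns. This yields $(d-1)!$ admissible placements of $\ell$ inside the window.

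With these placements fixed, each free row must fill its remaining $d-2$ positions inside the window with values that are distinct and strictly less than $\ell$ (they cannot equal $\ell$, already used in that row, nor exceed $\ell$, which is the column maximum). An ordered choice of $d-2$ such values from $\{0,1,\ldots,\ell-1\}$ is counted by $A(\ell,d-2)$, giving $A(\ell,d-2)^{d-1}$ overall. The remaining $n-d+1$ entries of each free row outside the window then carry the unused $n-d+1$ values in an arbitrary order, contributing $\bigl((n-d+1)!\bigr)^{d-1}$.

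Summing over $\ell$ (with the convention $A(\ell,d-2)=0$ when $\ell<d-2$, so only terms with $\ell\ge d-2$ survive) yields
\[N_i=(d-1)!\bigl((n-d+1)!\bigr)^{d-1}\sum_{\ell=0}^{n-1}A(\ell,d-2)^{d-1},\]
visibly independent of $i$. Multiplying by the $n-d+2$ valid starting positions produces the announced formula. The principal obstacle is the bijection argument in the second paragraph, which pins down exactly how the $d-1$ copies of $\ell$ must be distributed across the window; once that is in hand, the remaining counts are routine bookkeeping.
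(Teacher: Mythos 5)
Your proposal is correct and follows essentially the same argument as the paper's proof: stratify by the common level $\ell$, observe that the $d-1$ copies of $\ell$ must form a bijection between rows and window columns (giving $(d-1)!$), count the remaining window entries per row as ordered selections from $\{0,\ldots,\ell-1\}$ (giving $A(\ell,d-2)^{d-1}$), and fill the outside positions in $\left((n-d+1)!\right)^{d-1}$ ways. The only cosmetic difference is that you fix the window position first and sum over the $n-d+2$ positions, whereas the paper builds the plateau block and inserts it at the end; the counts are identical.
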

\begin{proof}
	Suppose that the elements of a $(d-1)$-plateau are on level $\ell$ for some $0\le \ell \le n-1$. It is clear that for this $(d-1)$-plateau, there is exactly one $\ell$ in each row, and there are $(d-1)!$ choices to decide the internal ordering of these $\ell$'s.
	After that, we need to choose the remaining $d-2$ entries from the set $\{ 0,1,\ldots, \ell -1\}$ and this contributes to $(A(\ell,d-2))^{d-1}$. Summing over all possible $\ell$, we can obtain that there are $(d-1)!\sum_{\ell=0}^{n-1} \left(A(\ell,d-2) \right)^{d-1}$ choices for such a $(d-1)$-plateau.
	
	To complete the construction of a $(d,n)$-permutation, we still need to arrange the remaining elements with $\left((n-(d-1))! \right)^{d-1}$ choices, and then insert the $(d-1)$-plateau into these elements in $(n-(d-1)+1)$ ways. Consequently, we obtain Equation~\eqref{total-occ-k-plateau-1}.
\end{proof}
In the three-dimensional case, it is straightforward to calculate the total number of plateaux (occurrences of $\underline{11}$) among all permutations. Moreover, we can obtain the total number of ascents and descents in this case.
\begin{cor}\label{cor-plateau-S_n^3-levMax}
	Let $n\geq2$. The total number of plateaux among all permutations in $S^3_n$ is given by
	\begin{equation}\label{total-occ-plateau-2}
		\frac{(2n-1)n!(n-1)!}{3}.
	\end{equation}
	The total number of ascents (the same as descents) in $S^3_n$ is given by
	\begin{equation}\label{total-occ-asc} \frac{(3n^2-5n+1)n!(n-1)!}{6}.\end{equation}
\end{cor}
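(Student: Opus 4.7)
The plan is to deduce both formulas from Theorem~\ref{distr-k-plateau-all-2} together with a short sum-counting argument. For the first formula, I would simply specialize Theorem~\ref{distr-k-plateau-all-2} to $d=3$. Since $A(\ell,1)=\ell$, the inner sum becomes $\sum_{\ell=0}^{n-1}\ell^2 = \frac{(n-1)n(2n-1)}{6}$, and the outer factor equals $(n-1)\cdot 2\cdot((n-2)!)^2$. Multiplying these and using the identity $n!(n-1)! = n(n-1)^2((n-2)!)^2$ immediately yields $\frac{(2n-1)n!(n-1)!}{3}$, establishing (\ref{total-occ-plateau-2}).

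For the second formula, the key observation is that for each $\Pi\in S_n^3$, every one of its $n-1$ consecutive pairs of columns is, under $\levMax$, either an ascent, a descent, or a plateau, so $\asc(\Pi)+\des(\Pi)+\underline{11}(\Pi)=n-1$. Summing over all $(n!)^2$ elements of $S_n^3$ gives
\[\text{(total asc)}+\text{(total des)}+\text{(total plateaux)}=(n-1)(n!)^2.\]
To equate the first two totals, I would use the column-reversal involution $(\pi^2,\pi^3)\mapsto((\pi^2)^R,(\pi^3)^R)$ on $S_n^3$, where $\pi^R$ denotes the reverse of the one-line word. Because $\levMax(\Pi^R_i)=\levMax(\Pi_{n+1-i})$, this bijection swaps ascents with descents while preserving plateaux, forcing total ascents to equal total descents.

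Combining the two displays with the plateau count from (\ref{total-occ-plateau-2}), I would then compute
\[\text{total asc}=\frac{1}{2}\left[(n-1)(n!)^2-\frac{(2n-1)n!(n-1)!}{3}\right].\]
Writing $(n!)^2 = n\cdot n!(n-1)!$ and factoring out $n!(n-1)!/6$ reduces the bracket to $3n(n-1)-(2n-1) = 3n^2-5n+1$, recovering (\ref{total-occ-asc}). I do not anticipate any real obstacle: the whole argument reduces to plugging into Theorem~\ref{distr-k-plateau-all-2}, applying the three-way classification of consecutive pairs, and invoking the column-reversal symmetry; the only step worth double-checking is that the reversal is well-defined as a bijection on $S_n^3$, which holds because reversing each of $\pi^2,\pi^3\in S_n$ again yields elements of $S_n$ and the value of $\levMax$ on a column depends only on the pair $(\pi^2_i,\pi^3_i)$, not on its position.
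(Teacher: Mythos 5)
Your proposal is correct and follows essentially the same route as the paper: specializing Theorem~\ref{distr-k-plateau-all-2} to $d=3$ (with $\sum_{\ell}\ell^2=\frac{(n-1)n(2n-1)}{6}$) for the plateau count, then using the reversal symmetry to equate total ascents with total descents and the three-way classification of the $(n-1)(n!)^2$ consecutive pairs to solve for the ascent total. The algebraic simplifications you carry out match the intended computation, so no gap remains.
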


\begin{proof}
	Set $d=3$ in Equation~\eqref{total-occ-k-plateau-1} to obtain
	\[ (n-1)((n-2)!)^2 \sum_{\ell=0}^{n-1}2!\ell^2,\]
	which coincides with Equation~\eqref{total-occ-plateau-2} by substituting
	\[\sum_{\ell=1}^{n-1}\ell^2=\frac{(n-1)n(2n-1)}{6}.\]
	For the second assertion, considering the reversal of all permutations in $S^3_n$, we observe that the total number of ascents equals the total number of descents. Therefore, twice the total number of ascents plus the total number of plateau equals $(n-1)(n!)^{2}$, which represents the total number of consecutive pairs of elements in $S^3_n$.
	Thus, the total number of ascents is given by
	$$\frac{1}{2}\left((n-1)(n!)^{2}-\frac{(2n-1)n!(n-1)!}{3}\right).$$
	This completes the proof.
\end{proof}


\section{$\levSum$: levels defined by sums}\label{levSum-sec}

In this section, we introduce the definitions of minimal, maximal, minimax, and $c$-bounded permutations. These concepts are mostly trivial under $\levMax$, while they lead to several interesting results under $\levSum$. The common combinatorial objects in these two definitions of levels are the occurrences of patterns across all permutations.

\subsection{Minimal permutations}\label{min-perm-sec}
{In this subsection, we examine the minimal permutations and explicitly determine their maximum levels by presenting a construction of specific examples.}

We first provide the definition of minimal permutations for any level definition. For any $d\geq 2$, we let $m_{d,n}=\min\{\level_{\max}(\Pi):\Pi\in S_n^d\}$ and $M_{d,n}=\{\Pi\in S_n^d:\level_{\max}(\Pi)=m_{d,n}\}$. The permutations in $M_{d,n}$ are called {\em minimal permutations} or {\em minimal $(d,n)$-permutations} when we want to emphases the dimension and length of minimal permutations.

The notion of a minimal permutation is trivial for $\levMax$, as in this case $M_{d,n}=S^d_n$. Moreover, the notion of a minimal permutation is trivial for $\levSum$ when $d=2$ because $m_{2,n}=n-1$, meaning that each permutation is minimal. Therefore, $M_{2,n}=S_n^2=S_n$ in this case, with $n!$ such permutations. On the other hand, for $\levSum$ and $d=3$, it is easy to see that each minimal $(3,n)$-permutation must be a permutation of the columns of
\begin{equation}\label{minimal-sol-d=3} X_{3,n}=\left(\begin{array}{cccc} 0 & 1 & \dots & n-1 \\ n-1 & n-2 & \dots & 0  \end{array}\right)\end{equation}
and hence the number of minimal $(3,n)$-permutations in this case is again $n!$. The situation becomes more complicated for $\levSum$ when $d\geq 4$. We are able to determine $m_{d,n}$ for $\levSum$ in the general case by constructing classes of minimal permutations, but we have been unable to find the exact cardinality of $M_{d,n}$ in the general case.

{A special case is that} the size of $\Ca(M_{4,2k+1})$ corresponds to sequence A002047 in \cite{oeis}, which has several combinatorial interpretations, including essentially our minimal permutations (by subtracting $k$ from each entry in our permutation, one obtains an object in \cite[A002047]{oeis}). No formula is listed in \cite[A002047]{oeis}, suggesting that enumerating minimal permutations might be challenging.

In the case of $d\geq 3$, we derive the explicit formula for $m_{d,n}$ under $\levSum$, as stated in the following theorem.

\begin{thm}\label{minimal-thm}
	We have $m_{d,n}=\left\lceil \frac{(d-1)(n-1)}{2} \right\rceil$ for all $n\geq 1$ and $d\geq 3$.
\end{thm}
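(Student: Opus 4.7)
The plan is to establish matching lower and upper bounds on $m_{d,n}$. The lower bound is a short averaging argument, while the upper bound is realized by an explicit construction that splits by the parity of $d-1$, and when $d-1$ is odd, further splits by the parity of $n$.

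First I would prove the lower bound. Since each of the $d-1$ non-natural rows of $\Pi$ is a permutation of $\{0,1,\ldots,n-1\}$, summing $\levSum(\Pi_i)$ over the $n$ columns gives $(d-1)n(n-1)/2$, so the columnwise average is $(d-1)(n-1)/2$. Consequently, at least one column must satisfy $\levSum(\Pi_i)\geq\lceil(d-1)(n-1)/2\rceil$, which gives $m_{d,n}\geq\lceil(d-1)(n-1)/2\rceil$.

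For the matching upper bound I would construct $\Pi$ explicitly. When $d-1$ is even, take $(d-1)/2$ complementary pairs of rows, each pair consisting of $\sigma=0,1,\ldots,n-1$ together with its reverse $n-1,n-2,\ldots,0$; every pair contributes $n-1$ per column, so every column sum equals the average. When $d-1$ is odd (so $d\geq 4$), I would use $(d-4)/2$ complementary pairs together with a $3\times n$ ``near-magic'' block built from the rows $0,1,\ldots,n-1$, a cyclic shift by $\lfloor n/2\rfloor$, and a carefully chosen third row. For $n$ odd I would set the third row to be $3(n-1)/2$ minus the pointwise sum of the first two rows; a direct computation shows this yields a valid permutation that places the even values of $\{0,\ldots,n-1\}$ in one block of positions and the odd values in the other, with every three-row column sum equal to $3(n-1)/2$. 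For $n=2m$ even, exact balance is impossible, and I would instead fill the third row by putting the odd values $2m-1,2m-3,\ldots,1$ in the first $m$ columns and the even values $2m-2,2m-4,\ldots,0$ in the last $m$ columns, so that each three-row column sum is either $3m-1$ or $3m-2$. Summing the pair and block contributions then gives $\level_{\max}(\Pi)=\lceil(d-1)(n-1)/2\rceil$ in both sub-cases.

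I expect the main obstacle to be the sub-case $d-1$ odd, $n$ even, where the non-integrality of $(d-1)(n-1)/2$ rules out any exactly-balanced $3\times n$ block, so the near-magic design must be calibrated so that its deficit is exactly $1/2$. The parity-split recipe above is designed precisely to achieve this; once the third row is correctly defined, checking that each candidate third row is truly a permutation of $\{0,\ldots,n-1\}$ and that the column-sum arithmetic comes out as claimed is routine.
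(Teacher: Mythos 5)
Your proposal is correct and follows essentially the same route as the paper: the identical averaging lower bound, plus an upper-bound construction obtained by stacking complementary pairs (the paper's $X_{3,n}$) with a $3\times n$ near-balanced block (the paper's $Y_{4,n}$), split by the parities of $d-1$ and $n$. The only differences are cosmetic choices of the cyclic shift and the ordering within the third row of the even-$n$ block.
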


\begin{proof}
	We begin by finding a lower bound for $m_{d,n}$. Note that all entries in each row of a permutation $\Pi\in M_{d,n}$ sum up to $\frac{n(n-1)}{2}$, hence the total sum of all entries in $\Pi$ is
	\[\frac{n(n-1)(d-1)}{2} = \levSum(\Pi_1)+\levSum(\Pi_2)+\cdots+\levSum(\Pi_n),\]
	which is less than or equal to $n\cdot m_{d,n}$.
	Thus, we have
	\begin{equation}\label{lower-bound-on-mdn} m_{d,n}\geq  \frac{(d-1)(n-1)}{2}. \end{equation}
	
	Next it suffices to show that there exists a $(d,n)$-permutation $\Pi$ such that $\levSum_{\max}(\Pi)=\left\lceil \frac{(d-1)(n-1)}{2} \right\rceil$ for all $n\geq 1$ and $d\geq 3$.
	
	We have already presented the permutation $X_{3,n}$ in (\ref{minimal-sol-d=3}), whose elements are all at the same level $\left\lceil \frac{(d-1)(n-1)}{2} \right\rceil$ and the case of $n=1$ is trivial, so we assume that $d\geq 4$ and $n\geq2$. By letting the second row be a cyclic shift of the first row to the left by $k$ entries for $k \geq 1$, and filling in the third row such that the sum of each column becomes $3k$, we obtain a proof of the claim that $m_{4, 2k+1} = \left\lceil \frac{(4-1)((2k+1)-1)}{2} \right\rceil = 3k$:
$$Y_{4,2k+1}=\left(\begin{array}{ccccccccc} 0 & 1 & \dots & k-1 & k & k+1 & \dots & 2k-1 & 2k \\ k & k+1 & \dots & 2k-1 & 2k & 0 & \dots & k-2 & k-1 \\ 2k & 2k-2 & \dots & 2 & 0 & 2k-1 & \dots & 3 & 1  \end{array}\right).$$
Using a similar construction as for $Y_{4,2k+1}$, we prove the claim that $m_{4,2k}=\left\lceil \frac{(4-1)(2k-1)}{2} \right\rceil=3k-1$:
$$Y_{4,2k}=\left(\begin{array}{ccccccccc} 0 & 1 & \dots & k-1 & k & k+1 & \dots & 2k-2 & 2k-1 \\ k-1 & k & \dots & 2k-2 & 2k-1 & 0 & \dots & k-3 & k-2 \\ 2k-1 & 2k-3 & \dots & 1 & 0 & 2k-2 & \dots & 4 & 2  \end{array}\right).$$
Now, if $d=2s+1$ for $s\geq 1$, then we can take $s$ copies of  $X_{3,n}$ from (\ref{minimal-sol-d=3}) and stack them on top of each other to obtain the desired result. On the other hand, if $d=2s$ for $s\geq 2$, and $n=2k+1$ (resp., $n=2k$), we take $Y_{4,2k+1}$ (resp., $Y_{4,2k}$) and $s-2$ copies of $X_{3,2k+1}$ (resp., $X_{3,2k}$)  and stack them on top of each other to obtain the desired result.
\end{proof}

\begin{rem}\label{minimal-perms-many}  Note that by permuting rows and columns in our constructions in Theorem~\ref{minimal-thm}, one can obtain many more permutations in $M_{d,n}$. Additionally, more such permutations  in $M_{d,2k}$ can be obtained by swapping $2k-i$ with $2k-i+1$ in the last row of $Y_{4,2k}$ for $i=3,5,\ldots,2k-1$, and then permuting rows and columns of these permutations.
\end{rem}

The set ${\rm CP}_{d,n}$ of {\em complete-plateau permutations} is defined as
$$\{\Pi\in S_n^d:\level(\Pi_i)=\ell \mbox{ for }1\leq i\leq n \mbox{ and fixed }\ell\}=\{\Pi\in S_n^d:\underline{11}(\pi)=n-1\}.$$
Under $\levSum$, using our arguments in the proof of Theorem~\ref{minimal-thm}, we deduce that $\frac{n(n-1)(d-1)}{2}=n\ell$. Therefore, $$\ell=\frac{(n-1)(d-1)}{2}$$ is uniquely determined. Since $\ell$ must be an integer, at least one of
$n$ or $d$ must be odd. Moreover, we have $\levSum(\Pi_i)=\ell=m_{d,n}$ for all $1\leq i \leq n$, which implies that ${\rm CP}_{d,n}\subseteq M_{d,n}$. In fact, the following corollary establishes that \[{\rm CP}_{d,n}=\left\{\begin{array}{ll}
M_{d,n},& \text{if either $n$ or $d$ is odd;}\\
\varnothing, & \text{otherwise.}\\
\end{array}\right.\]
\begin{cor}\label{iff_min_per}
	Let $n\geq1$, $d\geq 2$ and either $n$ or $d$ is odd. A $(d,n)$-permutation $\Pi$ is in $M_{d,n}$ if and only if $\Pi$ is a complete-plateau permutation with all levels of its elements being $m_{d,n}$.
\end{cor}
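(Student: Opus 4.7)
The plan is to dispatch the two implications separately, each by a short argument that mostly repackages what the paragraph preceding the corollary has already established.

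For the ``if'' direction, suppose $\Pi$ is a complete-plateau permutation with $\levSum(\Pi_i)=m_{d,n}$ for every $i$. Then $\levSum_{\max}(\Pi)=m_{d,n}$, so by definition of $M_{d,n}$ we have $\Pi\in M_{d,n}$. This step is immediate and requires nothing beyond unwinding the definitions.

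For the ``only if'' direction, the key is a double-counting of the total entry sum, exactly as in the lower-bound half of the proof of Theorem~\ref{minimal-thm}. Since each of the $d-1$ rows of $\Pi$ (ignoring the conventional top row) is a permutation of $\{0,1,\dots,n-1\}$, the sum of all entries of $\Pi$ equals $(d-1)\cdot\tfrac{n(n-1)}{2}$, and on the other hand equals $\sum_{i=1}^{n}\levSum(\Pi_i)$. Hence
\[
\sum_{i=1}^{n}\levSum(\Pi_i)=\frac{(d-1)n(n-1)}{2}=n\cdot\frac{(d-1)(n-1)}{2}=n\cdot m_{d,n},
\]
where we use the hypothesis that $n$ or $d$ is odd so that $m_{d,n}=(d-1)(n-1)/2$ is an integer (as derived just before the corollary). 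If $\Pi\in M_{d,n}$, then $\levSum(\Pi_i)\leq m_{d,n}$ for every $i$, and the average of the $n$ summands already equals $m_{d,n}$, so equality must hold throughout: $\levSum(\Pi_i)=m_{d,n}$ for each $i$. This is precisely the complete-plateau condition at level $m_{d,n}$.

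There is no genuine obstacle here, since the only nontrivial ingredient is the identity $m_{d,n}=(d-1)(n-1)/2$ under the parity hypothesis, which has already been justified in the text. The proof will therefore consist of two short paragraphs, one per implication, with the sum-of-entries identity carrying the forward direction.
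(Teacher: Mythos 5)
Your proposal is correct and follows essentially the same route as the paper: the sufficiency is immediate from the definition of $M_{d,n}$, and the necessity uses the same total-entry-sum identity $\sum_i \levSum(\Pi_i)=\frac{n(n-1)(d-1)}{2}=n\cdot m_{d,n}$ together with the parity hypothesis making $m_{d,n}=\frac{(d-1)(n-1)}{2}$ an integer. The only cosmetic difference is that you phrase the forcing of equality as an averaging argument, whereas the paper runs it as a one-line proof by contradiction; the content is identical.
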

\begin{proof}
	The sufficiency is clear, and we assume that $\Pi\in M_{d,n}$ for the necessity.
	
	Because either $n$ or $d$ is odd and $\Pi\in M_{d,n}$, we have $\levSum(\Pi_i)\leq m_{d,n}=\left\lceil \frac{(d-1)(n-1)}{2} \right\rceil=\frac{(d-1)(n-1)}{2}$ for $1\leq i \leq n$. If there exists some element, say $\Pi_{i_0}$, such that $\levSum(\Pi_{i_0})<\frac{(d-1)(n-1)}{2}$, then the total sum of all entries in $\Pi$ is $\frac{n(n-1)(d-1)}{2}=\levSum(\Pi_1)+\cdots+\levSum(\Pi_{i_0})+\cdots+\levSum(\Pi_n)<n\cdot \frac{(d-1)(n-1)}{2}$, a contradiction.
\end{proof}

{In general, it is challenging to construct all minimal permutations, even under the complete-plateau condition. Here, we only give some sufficient and necessary conditions for minimal $(4,2k+1)$-permutations in shift forms.}

\begin{thm}\label{shiftform}
	Let $n\geq1$, $d\geq 2$ and either $n$ or $d$ is odd. Given $s_2,s_3,\ldots,s_{d}\in\{1,2,\ldots,n-1\}$ and $\pi_{n}^2,\pi_{n}^3,\ldots,\pi_{n}^d\in\{0,1,\ldots,n-1\}$ such that $\gcd(s_i,n)=1$ for all $i$, a $(d-1)\times n$ array of numbers can be defined by \[\Pi=\left\{\pi^i_j\right\}_{\begin{subarray}{l} 2 \leq i\leq d \\ 1\le j\le n\end{subarray}},\]
where $ \pi_j^i\equiv\pi_n^i+j\cdot s_i \text{ $(\md\,n)$}$. Then $\Pi$ is a $(d,n)$-permutation (omitting the first row).
	In particular, let $d=4$ and $n=2k+1$ with $k\geq1$. Assume (w.l.o.g.) that $s_2=1$ and $\pi_n^2=n-1$.
	\begin{enumerate}
		\item[{\rm (a)}] If $s_3=1$, then $\Pi\in M_{4,n}$ if and only if $s_4=n-2$, $(\pi_n^3,\pi_n^4)=(k-1,1)$ or $(k,0)$.
		\item[{\rm (b)}] If $s_3=k$, then $\Pi\in M_{4,n}$ if and only if $s_4=k$, $(\pi_n^3,\pi_n^4)=(0,k)$ or $(k,0)$.
	\end{enumerate}		
\end{thm}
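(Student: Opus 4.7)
My plan has two parts. For the first assertion, that the array $\Pi$ defined by $\pi_j^i \equiv \pi_n^i + j s_i \pmod n$ is a $(d,n)$-permutation, I would simply observe that $\gcd(s_i, n) = 1$ makes multiplication by $s_i$ a bijection of $\mathbb{Z}/n\mathbb{Z}$, so the row $\{(\pi_n^i + j s_i) \bmod n : 1 \leq j \leq n\}$ equals $\{0, 1, \ldots, n-1\}$ and is a permutation.

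For the main equivalence with $d = 4$ and $n = 2k+1$, I would invoke Corollary \ref{iff_min_per} together with Theorem \ref{minimal-thm} to reduce $\Pi \in M_{4,n}$ to the condition that every column sums to $m_{4,n} = 3k$. Since each entry lies in $\{0, \ldots, 2k\}$, every column sum lies in $\{0, \ldots, 6k\}$, and reducing the column sum modulo $n$ gives $(1 + s_3 + s_4) j + (\pi_n^3 + \pi_n^4 - 1) \pmod{2k+1}$. For this residue to be independent of $j$ we need $1 + s_3 + s_4 \equiv 0 \pmod{2k+1}$, which forces $s_4 = n-2$ in case (a) and $s_4 = k$ in case (b); the common residue must then equal $3k \equiv k - 1 \pmod{2k+1}$, so $\pi_n^3 + \pi_n^4 \in \{k, 3k+1\}$. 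The actual column sum at $j = n$, which is $2k + \pi_n^3 + \pi_n^4$, immediately rules out $3k+1$, so $\pi_n^3 + \pi_n^4 = k$ and both entries lie in $\{0, 1, \ldots, k\}$.

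At this point every column sum belongs to $\{k-1,\, 3k,\, 5k+1\}$, and minimality demands that none equal $k-1$ or $5k+1$. For the necessary conditions in each case I would evaluate one further probing column. In case (a), at $j = 1$, no overflow occurs for $\pi_n^3 + 1$, while $\pi_1^4 \equiv \pi_n^4 - 2 \pmod{2k+1}$; if $\pi_n^4 \geq 2$ then $\pi_1^4 = \pi_n^4 - 2$ and the column sum collapses to $0 + (\pi_n^3 + 1) + (\pi_n^4 - 2) = k - 1$, which is forbidden. Hence $\pi_n^4 \in \{0, 1\}$, yielding $(\pi_n^3, \pi_n^4) \in \{(k, 0), (k-1, 1)\}$. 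In case (b), at $j = 2$ with $s_3 = s_4 = k$, each of $\pi_2^3, \pi_2^4$ equals $\pi_n^i - 1 \pmod{2k+1}$; if both $\pi_n^3, \pi_n^4 \geq 1$ the column sum is $1 + (\pi_n^3 - 1) + (\pi_n^4 - 1) = k - 1$, so one of $\pi_n^3, \pi_n^4$ must vanish, giving $(\pi_n^3, \pi_n^4) \in \{(0, k), (k, 0)\}$.

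For sufficiency I would verify directly that each of the four surviving candidates produces every column sum equal to $3k$, splitting $j$ into the natural ranges where a cyclic reduction of $\pi_n^i + j s_i$ occurs: for case (a), the ranges $1 \leq j \leq k$, $j = k+1$, and $k+2 \leq j \leq n$; for case (b), the parity classes of $j$. In each range the explicit residues are linear in $j$ and telescope to $3k$. The main obstacle is precisely this case-by-case bookkeeping, because each overflow past $2k$ contributes a correction of $-(2k+1)$ that must exactly cancel in the column sum. A helpful shortcut I would exploit is the identity $\sum_j (\pi_j^2 + \pi_j^3 + \pi_j^4) = 3kn$, which, combined with the three possible column-sum values, forces the number of columns with sum $k-1$ to equal the number with sum $5k+1$; hence it suffices to rule out just one of these two bad values across all $j$, roughly halving the verification.
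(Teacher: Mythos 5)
Your proposal is correct and follows essentially the same route as the paper: reduce membership in $M_{4,n}$ via Corollary~\ref{iff_min_per} and Theorem~\ref{minimal-thm} to all column sums equalling $m_{4,n}=3k$, deduce $1+s_3+s_4\equiv 0\ (\md\,n)$ (the paper's Claim) to force $s_4=n-2$ resp.\ $s_4=k$, pin down $(\pi_n^3,\pi_n^4)$ by probing specific columns, and verify sufficiency by direct range/parity bookkeeping. The only cosmetic difference is that you probe the column sums at $j=1$ (case (a)) and $j=2$ (case (b)), whereas the paper works with the inequalities $0\le\pi_j^3\le 3k-(j-1)$ evaluated near $j=n$; both variants are sound.
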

\begin{proof}
	We first show that the array $\Pi$ defined above is a $(d,n)$-permutation.
	Since $\gcd(s_i,n)=1$, there exists a $j\equiv s_i^{-1}(x-\pi^i_n)\text{ $(\md\,n)$}$ such that $\pi_j^i=x$ for any $x\in\{0,1,\ldots,n-1\}$. Therefore, we have $\Pi\in S^d_n$.
	
	{\noindent\bf Claim.}
	If $\Pi\in M_{d,n}$, then $\sum_{i=2}^d\pi_n^i=m_{d,n} \mbox{\ \ and\ \ }\sum_{i=2}^d s_i \equiv 0\text{ $(\md\,n)$}.$

    {\noindent \bf Proof of Claim.}
	By Corollary~\ref{iff_min_per}, we have $\sum_{i=2}^d \pi_n^{i}=\levSum(\Pi_n)=m_{d,n}$ and
	\[m_{d,n}=\levSum(\Pi_1)=\sum_{i=2}^d \pi_1^{i}\equiv\sum_{i=2}^d (\pi_n^i+s_i)=m_{d,n}+\sum_{i=2}^d s_i\text{ $(\md\,n)$},\]
	which gives the desired result.	

	Now we are in a position to prove the rest of this theorem.
	Note that for $k=1$ ($n=3$) statements (a) and (b) are consistent.

	\begin{enumerate}		
		\item[{\rm (a)}] For sufficiency, verification is straightforward. To prove necessity, assume that $\Pi\in M_{4,n}$.
		
		By the above claim, we have $n\mid (s_2+s_3+s_4)=2+s_4$, so $s_4=n-2$, which is co-prime with $n$ since $n$ is odd.
		
		Because $s_2=1$ and $\pi_n^2=n-1$, we have $\pi_j^2=j-1$ for $1\leq j\leq n$. Now, we only need to choose $(\pi_j^3,\pi_j^4)$ such that $3k=m_{4,n}=\pi_j^3+\pi_j^4+j-1$ for all $j$. Hence, $0\leq \pi_j^3\leq 3k-(j-1)$. Note that
		\begin{align*}
		\pi_j^3&=\left\{\begin{array}{ll}
		\pi_n^3+j,& 1\leq j \leq n-1-\pi_n^3;\vspace{0.15cm}\\
		\pi_n^3+j-n,& n-\pi_n^3\leq j \leq n.\\
		\end{array}\right.
		\end{align*}
		Therefore, in the case of $j=n-1-\pi_{n}^3$ and $j=n$, we have
		\begin{align*}
		\left\{\begin{array}{ll}
		\pi_{n-1-\pi_n^3}^3=\pi_n^3+n-1-\pi_n^3\leq 3k-(n-1-\pi_n^3-1);\vspace{0.15cm}\\
		\pi_n^3\leq 3k-(n-1).\\
		\end{array}\right.
		\end{align*}
		Finally, we obtain that $\pi_n^3$ equals to $k-1$ or $k$, and the item (a) is proved.
		
		\item[{\rm (b)}] For sufficiency, we only need to verify the case of $(\pi_n^3,\pi_n^4)=(0,k)$, as the other case can be obtained by swapping the last two rows of $\Pi$. In fact, if $s_3=s_4=k$ and $(\pi_n^3,\pi_n^4)=(0,k)$, then we have
		\begin{align*}
		\pi_j^3&=\left\{\begin{array}{ll}
		2k-\frac{j-2}{2},& \text{if $j$ is even;}\vspace{0.15cm}\\
		k-\frac{j-1}{2}, & \text{if $j$ is odd;}\\
		\end{array}\right. \text{\qquad and \qquad} \pi_j^4&\hspace{-1cm}=\left\{\begin{array}{ll}
		k-\frac{j}{2},& \text{if $j$ is even;}\vspace{0.15cm}\\
		2k-\frac{j-1}{2}, & \text{if $j$ is odd.}\\
		\end{array}\right.
		\end{align*}
		A straightforward verification shows that $\Pi\in M_{4,d}$.
		
		Conversely, assuming that $\Pi\in M_{4,d}$, we get $s_4=k$ and $0\leq \pi_j^3\leq 3k-(j-1)$ in the same way as in (a). In particular, $0\leq \pi_n^3\leq k$ and $0\leq \pi_{n-1}^3\leq k+1$. If $0\leq \pi_n^3\leq k-1$, then $k+1\geq\pi_{n-1}^3=\pi_n^3-k+n$. Hence, $\pi_n^3=0$ or $k$, and this completes the proof.
	\end{enumerate}	
\vspace{-0.83cm}
\end{proof}

For an illustration of the construction of the shift-form permutations in Theorem \ref{shiftform}, see the following example.
\begin{exam}
	Let $d=4$, $n=5$, $(\pi_n^2,\pi_n^3,\pi_n^4)=(4,0,2)$ and $(s_2,s_3,s_4)=(1,2,2)$. Then the array
	\begin{equation}\label{ex-min-max}
		\Pi=\left\{ \pi_j^i\right\}_{\begin{subarray}{l} 2 \leq i\leq d \\ 1\le j\le n\end{subarray}}=\left(\begin{array}{ccccc} 0 & 1 & 2 & 3 & 4 \\ 2 & 4 & 1 & 3 & 0\\ 4 & 1 & 3 & 0 & 2 \end{array}\right),\end{equation}
	where $ \pi_j^i\equiv\pi_n^i+j\cdot s_i \text{ $(\md\,n)$}$, is indeed a minimal $(4,5)$-permutation.
\end{exam}

\begin{rem}
	In the case of $d=4$, computer experiments show that all elements in $M_{4,3}$ and $M_{4,5}$ can be constructed using the method given in Theorem~\ref{shiftform}. However, there exist other permutations in $M_{4,7}$, for example,
	\[\Pi=\left(\begin{array}{ccccccc}
	0&1&2&3&4&5&6\\
	6& 4& 1& 5& 0& 2& 3\\
	3& 4& 6& 1& 5& 2& 0\\
	\end{array}\right)\in M_{4,7}.
	\]
\end{rem}

\subsection{Maximal permutations}\label{max-perms-sec}
In this subsection, similar to the previous one, we define maximal permutations and further determine their minimum level. Besides, we also establish several connections between these maximal and minimal permutations.

For any definition of $\level$, let $m^*_{d,n}=\max\{\level_{\min}(\Pi):\Pi\in S_n^d\}$. A \textit{maximal $(d,n)$-permutation} is a $(d,n)$-permutation $\Pi$ whose minimal entry in $\level(\Pi)$ is $m^*_{d,n}$. We denote by $M^*_{d,n}$ the set of all maximal $(d,n)$-permutations, i.e., $M^*_{d,n}=\{\Pi\in S_n^d:\level_{\min}(\Pi)=m^*_{d,n}\}$.

For $\levSum$, the case of $d=2$ is trivial and we have $m^*_{2,n}=0$ and $M^*_{2,n}=S_n$. For $d=3$, it is obvious that $m_{d,n}=m^*_{d,n}$ and $M_{d,n}=M^*_{d,n}$. For $d\geq 3$, we have the following result.

\begin{thm}\label{maximal-thm} We have that $m^*_{d,n}=\left\lfloor \frac{(d-1)(n-1)}{2} \right\rfloor$  for all $n\geq 1$ and $d\geq 3$.
\end{thm}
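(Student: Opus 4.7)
The plan is to prove both bounds by parallelling the proof of Theorem~\ref{minimal-thm}, with the key new observation being a complementation involution that swaps the roles of maximal and minimal permutations. Define the map $\Pi \mapsto \bar{\Pi}$ on $S_n^d$ by replacing each entry $\pi^i_j$ (for $2 \leq i \leq d$) with $n-1-\pi^i_j$, while leaving the top row $\pi^1 = 01\cdots(n-1)$ unchanged. Since each row remains a permutation of $\{0,1,\ldots,n-1\}$, the result $\bar{\Pi}$ lies in $S_n^d$, and a direct computation gives
\[ \levSum(\bar{\Pi}_j) = (d-1)(n-1) - \levSum(\Pi_j) \]
for every column $j$. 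Hence $\Pi \mapsto \bar{\Pi}$ is an involution exchanging $\levSum_{\max}$ with $(d-1)(n-1) - \levSum_{\min}$.

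First I would establish the upper bound $m^*_{d,n} \leq \left\lfloor (d-1)(n-1)/2 \right\rfloor$ by the same averaging argument used in Theorem~\ref{minimal-thm}: summing $\levSum(\Pi_j)$ over all columns yields the total $\frac{n(n-1)(d-1)}{2}$, so by pigeonhole the minimum level is at most $\frac{(d-1)(n-1)}{2}$, and since levels are integers, at most $\left\lfloor (d-1)(n-1)/2 \right\rfloor$.

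For the matching lower bound, I would apply the involution to any minimal permutation constructed in Theorem~\ref{minimal-thm}. If $\Pi \in M_{d,n}$ so that $\levSum_{\max}(\Pi) = \left\lceil (d-1)(n-1)/2 \right\rceil$, then
\[ \levSum_{\min}(\bar{\Pi}) = (d-1)(n-1) - \left\lceil \tfrac{(d-1)(n-1)}{2} \right\rceil = \left\lfloor \tfrac{(d-1)(n-1)}{2} \right\rfloor, \]
using the identity $\lceil x \rceil + \lfloor x \rfloor = 2x$ valid for any half-integer or integer $x$. This exhibits a permutation $\bar{\Pi} \in S_n^d$ attaining the upper bound, completing the proof.

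The main obstacle here is really only conceptual: one needs to recognize that the complementation map perfectly dualizes minima and maxima of $\levSum$, after which all the constructive labour already performed in Theorem~\ref{minimal-thm} (the families $X_{3,n}$, $Y_{4,2k+1}$, $Y_{4,2k}$, and the stacked constructions for general $d$) transfers over for free. No new case analysis or explicit constructions are required.
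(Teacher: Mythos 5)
Your proposal is correct. Both you and the paper obtain the upper bound $m^*_{d,n}\leq\lfloor (d-1)(n-1)/2\rfloor$ by the same averaging/pigeonhole argument on the total entry sum $\frac{n(n-1)(d-1)}{2}$. Where you diverge is the lower bound: the paper simply reuses the explicit constructions $X_{3,n}$, $Y_{4,2k+1}$, $Y_{4,2k}$ and their stackings from Theorem~\ref{minimal-thm} and asserts (which requires a small check of column sums, e.g.\ that the columns of $Y_{4,2k}$ take only the values $3k-2$ and $3k-1$) that their minimum levels equal $\lfloor (d-1)(n-1)/2\rfloor$; you instead apply the row-complementation involution $\pi^i_j\mapsto n-1-\pi^i_j$ to any permutation in $M_{d,n}$, using $\levSum(\bar{\Pi}_j)=(d-1)(n-1)-\levSum(\Pi_j)$ and $\lfloor x\rfloor+\lceil x\rceil=2x$ to transfer Theorem~\ref{minimal-thm} wholesale. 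Your route is clean and avoids re-inspecting the constructions, and it is very much in the spirit of the paper itself: the same complementation map $\phi$ appears later in Theorem~\ref{thm-min-max-equal} to prove $|M^*_{d,n}|=|M_{d,n}|$, so your argument essentially front-loads that observation; the paper's route, by contrast, keeps the proof self-contained at the level of the explicit examples and makes visible which concrete permutations attain the bound.
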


\begin{proof}
	We start by presenting an upper bound for $m^*_{d,n}$. Similar to the proof of Theorem~\ref{minimal-thm}, we have $\frac{n(n-1)(d-1)}{2}\geq n\cdot m^*_{d,n}$ for any $\Pi\in M^*_{d,n}$. Consequently,
	\begin{align}\label{upper-bound-on-m*dn}
	m^*_{d,n}\leq \frac{(d-1)(n-1)}{2}.
	\end{align}
	The remaining proof follows precisely the same steps as those in the proof of Theorem~\ref{minimal-thm}, where the minimal entries of the level vectors of $(d,n)$-permutations constructed are exactly $\left\lfloor \frac{(d-1)(n-1)}{2} \right\rfloor$.
\end{proof}

Now, we are ready to study the set $M^*_{d,n}$. The case of $n=1$ is trivial. For $n=2$, the set $M^*_{d,2}$ coincides with the set $M_{d,2}$, as shown in the next corollary.

\begin{cor}\label{2_max_per}
	If $n=2$ and $d\geq2$, then $M^*_{d,2}=M_{d,2}$.
\end{cor}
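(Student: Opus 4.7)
My plan is to exploit the very restrictive structure of length-2 permutations: since each row is a permutation of $\{0,1\}$, the two columns are forced to be ``complementary'' in each row. Specifically, for any $\Pi\in S_n^d$ with $n=2$ we have $\pi^i_1+\pi^i_2=1$ for every $2\le i\le d$, and summing over $i$ gives
\[
\levSum(\Pi_1)+\levSum(\Pi_2)=d-1.
\]
So if we write $a:=\levSum(\Pi_1)$, then $\levSum(\Pi_2)=d-1-a$, and the entire level vector of $\Pi$ is determined by the single integer $a\in\{0,1,\ldots,d-1\}$.

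Next, I would invoke Theorems~\ref{minimal-thm} and~\ref{maximal-thm} specialized to $n=2$: $m_{d,2}=\lceil (d-1)/2\rceil$ and $m^*_{d,2}=\lfloor (d-1)/2\rfloor$. The key elementary observation is that for any pair of non-negative integers $a,b$ with $a+b=d-1$, one has $\max(a,b)\ge \lceil (d-1)/2\rceil$ and $\min(a,b)\le \lfloor (d-1)/2\rfloor$, and moreover equality holds in one of these inequalities if and only if it holds in the other, in which case $\{a,b\}=\{\lceil (d-1)/2\rceil,\lfloor(d-1)/2\rfloor\}$. Applying this with $a=\levSum(\Pi_1)$ and $b=\levSum(\Pi_2)$, the condition $\Pi\in M_{d,2}$ (i.e., $\levSum_{\max}(\Pi)=m_{d,2}$) and the condition $\Pi\in M^*_{d,2}$ (i.e., $\levSum_{\min}(\Pi)=m^*_{d,2}$) are both equivalent to the same condition on the pair $\{\levSum(\Pi_1),\levSum(\Pi_2)\}$, whence $M^*_{d,2}=M_{d,2}$.

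The argument is essentially a one-variable parity check, and I do not expect any substantive obstacle; the only minor point to handle cleanly is the two cases $d$ odd (where $\lceil(d-1)/2\rceil=\lfloor(d-1)/2\rfloor$ and the claim is nearly immediate from Corollary~\ref{iff_min_per}) versus $d$ even (where the two extreme values differ by $1$ and the complementarity $a+(d-1-a)=d-1$ does the work). Both cases fit into the uniform statement above, so the write-up should be a short paragraph.
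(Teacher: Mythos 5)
Your proof is correct and follows essentially the same route as the paper: both rest on the identity $\levSum(\Pi_1)+\levSum(\Pi_2)=d-1$ together with the values $m_{d,2}=\lceil (d-1)/2\rceil$ and $m^*_{d,2}=\lfloor (d-1)/2\rfloor$ from Theorems~\ref{minimal-thm} and~\ref{maximal-thm}. The only difference is cosmetic: you characterize both memberships simultaneously by the condition $\{\levSum(\Pi_1),\levSum(\Pi_2)\}=\{\lfloor (d-1)/2\rfloor,\lceil (d-1)/2\rceil\}$, whereas the paper proves one inclusion and notes the other is symmetric.
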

\begin{proof}
	If $\Pi\in M^*_{d,2}$, then we can assume that $\levSum(\Pi_1)=m^*_{d,2}=\left\lfloor \frac{d-1}{2} \right\rfloor$. Since the sum of $\levSum(\Pi_1)$ and $\levSum(\Pi_2)$ is the total sum of all entries in $\Pi$, i.e., $\levSum(\Pi_1)+\levSum(\Pi_2)=d-1$, we have that $\levSum(\Pi_2)=\left\lceil \frac{d-1}{2} \right\rceil=m_{d,2}$, implying that $\Pi\in M_{d,2}$. Hence, $M^*_{d,2}\subseteq M_{d,2}$. Justifying that $M_{d,2}\subseteq M^*_{d,2}$ can be done similarly.
\end{proof}

If either $n$ or $d$ is odd, not only do we have  $M^*_{d,n}=M_{d,n}$, but also the following stronger conclusions can be derived.

\begin{cor}\label{odd_max_per}
	Let $n\geq1$, $d\geq 2$. If either $n$ or $d$ is odd, then
	\begin{enumerate}
		\item[{\em (a)}] $m^*_{d,n}=m_{d,n}$.
		\item[{\em (b)}] $\Pi\in M^*_{d,n}$ if and only if $\levSum(\Pi_i)=m^*_{d,n}$ for $1\leq i\leq n$.
		\item[{\em (c)}] $M^*_{d,n}=M_{d,n}$.
	\end{enumerate}
\end{cor}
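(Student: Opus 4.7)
The plan is to prove the three parts in the order (a), (b), (c), since each builds on the previous one. The driving observation is that the parity hypothesis forces $(d-1)(n-1)$ to be even, which collapses the ceiling and floor appearing in Theorems~\ref{minimal-thm} and~\ref{maximal-thm} to the same integer.

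For part (a), I would simply note that if either $n$ or $d$ is odd, then $(d-1)(n-1)$ is even, so
\[m_{d,n}=\left\lceil\tfrac{(d-1)(n-1)}{2}\right\rceil=\tfrac{(d-1)(n-1)}{2}=\left\lfloor\tfrac{(d-1)(n-1)}{2}\right\rfloor=m^*_{d,n}.\]

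For part (b), the necessity is the interesting direction. If $\Pi\in M^*_{d,n}$, then by definition $\levSum(\Pi_i)\geq m^*_{d,n}$ for every $i$. But the total sum of entries in $\Pi$ equals $\tfrac{n(n-1)(d-1)}{2}=n\cdot m^*_{d,n}$ (using part (a)), and it also equals $\sum_{i=1}^n \levSum(\Pi_i)$. If any single $\levSum(\Pi_{i_0})$ were strictly greater than $m^*_{d,n}$, the sum would exceed $n\cdot m^*_{d,n}$, a contradiction. Hence all columns must sit at level exactly $m^*_{d,n}$. The sufficiency is immediate: if every $\levSum(\Pi_i)=m^*_{d,n}$, then $\levSum_{\min}(\Pi)=m^*_{d,n}$, so $\Pi\in M^*_{d,n}$.

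For part (c), I would chain together part (b), part (a), and Corollary~\ref{iff_min_per}. By part (b), $\Pi\in M^*_{d,n}$ iff $\levSum(\Pi_i)=m^*_{d,n}$ for all $i$; by part (a), this is the same as $\levSum(\Pi_i)=m_{d,n}$ for all $i$, which by Corollary~\ref{iff_min_per} characterises $M_{d,n}$ (as complete-plateau permutations at level $m_{d,n}$). Thus $M^*_{d,n}=M_{d,n}$.

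There is no real obstacle in this proof; the argument is essentially an averaging/pigeonhole observation once one recognises that the parity hypothesis makes the lower bound~\eqref{lower-bound-on-mdn} and the upper bound~\eqref{upper-bound-on-m*dn} coincide. The only small point requiring care is invoking Corollary~\ref{iff_min_per} in the final step, since it is stated precisely under the same parity hypothesis, so its application is legitimate here.
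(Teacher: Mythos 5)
Your proposal is correct and follows essentially the same route as the paper: part (a) by the parity observation collapsing the ceiling and floor, part (b) by the same averaging/pigeonhole argument (the paper phrases it as mirroring the proof of Corollary~\ref{iff_min_per}, which is exactly the contradiction you spell out), and part (c) by combining (a), (b) and Corollary~\ref{iff_min_per}. No gaps.
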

\begin{proof}
	\qquad
	\begin{enumerate}
		\item[(a)] Since either $n$ or $d$ is odd, we have $m^*_{d,n}=\left\lfloor \frac{(d-1)(n-1)}{2} \right\rfloor=\frac{(d-1)(n-1)}{2}=\left\lceil \frac{(d-1)(n-1)}{2} \right\rceil=m_{d,n}$.
		\item[(b)] We can follow exactly the same steps as those in the proof of Corollary~\ref{iff_min_per}, replacing $M_{d,n}$ with $M^*_{d,n}$,  $m_{d,n}$ with $m^*_{d,n}$, $\geq$ with $\leq$, $<$ with $>$, and the ceiling function with the floor function.
		\item[(c)] It follows from (a), (b) and Corollary~\ref{iff_min_per}.
	\end{enumerate}
\vspace{-0.77cm}
\end{proof}

\begin{rem}
	In general, $M^*_{d,n}$ may differ from $M_{d,n}$. For example, consider the permutation
	\begin{equation}\label{eq-Pi-min-not-max}
		\Pi=\left(\begin{array}{cccc}
			1 & 2 & 0 & 3\\
			1 & 0 & 3 & 2\\
			3 & 1 & 2 & 0
			\end{array}\right)\in S_4^4,
	\end{equation}
	where we find $\levSum_{\max}(\Pi)=5=m_{4,4}$ and $\levSum_{\min}(\Pi)=3<4=m^*_{4,4}$. Therefore, $\Pi$ is a minimal permutation but not a maximal one.
\end{rem}

While $M^*_{d,n}$ and $M_{d,n}$ can differ, we can always construct a bijection between these two sets. Therefore, $M^*_{d,n}$ and $M_{d,n}$ are equinumerous.

\begin{thm}\label{thm-min-max-equal}
	We have that $|M^*_{d,n}|=|M_{d,n}|$ for all $n\geq1$ and $d\geq2$.
\end{thm}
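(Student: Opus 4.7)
The plan is to exhibit an explicit bijection between $M^*_{d,n}$ and $M_{d,n}$ by complementing all entries in rows $2$ through $d$. Concretely, for $\Pi=\{\pi^i_j\}_{2\le i\le d,\,1\le j\le n}\in S_n^d$, I would define
$$\varphi(\Pi)=\{n-1-\pi^i_j\}_{2\le i\le d,\,1\le j\le n}.$$
Since the pointwise complement of a permutation of $\{0,1,\ldots,n-1\}$ is again a permutation of the same set, $\varphi$ sends $S_n^d$ to $S_n^d$. Moreover, it is manifestly an involution, hence a bijection on $S_n^d$.

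Next, I would analyze how $\varphi$ acts on levels. For any column $j$,
$$\levSum(\varphi(\Pi)_j)=\sum_{i=2}^{d}(n-1-\pi^i_j)=(d-1)(n-1)-\levSum(\Pi_j),$$
so $\varphi$ negates the level vector entry-wise (shifted by the constant $(d-1)(n-1)$). It follows that
$$\levSum_{\max}(\varphi(\Pi))=(d-1)(n-1)-\levSum_{\min}(\Pi),\qquad \levSum_{\min}(\varphi(\Pi))=(d-1)(n-1)-\levSum_{\max}(\Pi).$$

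The final step is to combine this with the explicit formulas from Theorems~\ref{minimal-thm} and~\ref{maximal-thm}. Using the elementary identity $\lceil x/2\rceil+\lfloor x/2\rfloor=x$ for integer $x$, one gets $m_{d,n}+m^*_{d,n}=(d-1)(n-1)$ for $d\ge 3$ (and the case $d=2$ is trivial since $M_{2,n}=M^*_{2,n}=S_n$). Therefore, if $\Pi\in M^*_{d,n}$ then $\levSum_{\min}(\Pi)=m^*_{d,n}$, so $\levSum_{\max}(\varphi(\Pi))=(d-1)(n-1)-m^*_{d,n}=m_{d,n}$, which forces $\varphi(\Pi)\in M_{d,n}$; symmetrically, $\varphi$ carries $M_{d,n}$ into $M^*_{d,n}$. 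Since $\varphi\circ\varphi=\mathrm{id}$, the restriction of $\varphi$ is a bijection $M^*_{d,n}\leftrightarrow M_{d,n}$, yielding $|M^*_{d,n}|=|M_{d,n}|$.

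There is no serious obstacle here: the entire argument rests on the observation that coordinate-wise complementation reverses levels up to the additive constant $(d-1)(n-1)$, which is precisely $m_{d,n}+m^*_{d,n}$. The only mild care needed is that $\varphi$ is applied to rows $2,\ldots,d$ and not to the canonical top row $\pi^1$, and that the parity case splits in the formulas for $m_{d,n}$ and $m^*_{d,n}$ conspire so that their sum equals $(d-1)(n-1)$ regardless of parity; both are immediate.
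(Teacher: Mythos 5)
Your proposal is correct and follows essentially the same route as the paper: the paper's proof uses exactly this row-wise complementation map and the relation $\levSum(\phi(\Pi)_i)=(n-1)(d-1)-\levSum(\Pi_i)$ together with $m_{d,n}+m^*_{d,n}=(d-1)(n-1)$ from Theorems~\ref{minimal-thm} and~\ref{maximal-thm} to conclude that the map exchanges $M^*_{d,n}$ and $M_{d,n}$.
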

\begin{proof}
	It is sufficient to present a bijection between $M^*_{d,n}$ and $M_{d,n}$. Consider the map $\phi:S_n^d\rightarrow S_n^d$ such that $\phi(\Pi)$ is obtained from $\Pi\in S_n^d$ by taking the complement of each row in $\Pi$ (replacing an entry $i$ in a row by $n-1-i$ for all $i$). Clearly, $\phi$ is a bijection. We only need to show that $\phi(\Pi)\in M_{d,n}$ for any $\Pi\in M^*_{d,n}$ and $\phi^{-1}(\Pi)\in M^*_{d,n}$ for any $\Pi\in M_{d,n}$. Given a maximal $(d,n)$-permutation $\Pi$, we have $\levSum(\Pi_i)\geq m^*_{d,n}$ and there is an element, say $\Pi_{i_0}$, such that $\levSum(\Pi_{i_0})=m^*_{d,n}$. Then we have $\levSum(\phi(\Pi)_i)=(n-1)(d-1)-\levSum(\Pi_i)\leq (n-1)(d-1)-m^*_{d,n}=m_{d,n}$ (the last equality follows from Theorems~\ref{minimal-thm} and~\ref{maximal-thm}) and $\levSum(\phi(\Pi)_{i_0})=m_{d,n}$, so $\phi(\Pi)\in M_{d,n}$. It is analogous to prove that $\phi^{-1}(\Pi)\in M^*_{d,n}$ if $\Pi\in M_{d,n}$, so we omit the proof.
\end{proof}
{
We next give an example to illustrate the map $\phi$ in Theorem~\ref{thm-min-max-equal}.

\begin{exam}
	Let $\Pi$ be the minimal permutation in \eqref{eq-Pi-min-not-max}. Then the permutation
	\begin{equation*}
		\phi(\Pi)=\left(\begin{array}{cccc}
			2 & 1 & 3 & 0\\
			2 & 3 & 0 & 1\\
			0 & 2 & 1 & 3
		\end{array}\right)\in S_4^4,
	\end{equation*}
	where $\levSum_{\max}(\Pi)=6>m_{4,4}$ and $\levSum_{\min}(\Pi)=4=m^*_{4,4}$, is a maximal permutation but not a minimal one.
\end{exam}}
\subsection{Minimax permutations}\label{minimax-sec}
In this subsection, we explore the equivalent conditions for minimax permutations, which are defined as permutations that are both minimal and maximal.

We call a $(d,n)$-permutation a \textit{minimax $(d,n)$-permutation} if it is both minimal and maximal. Let $M^{\circ}_{d,n}$ denote the set $M_{d,n}\cap M^*_{d,n}$, which consists of all minimax $(d,n)$-permutations. Clearly, for $\levMax$, we have $M^{\circ}_{d,n}=M^*_{d,n}$. However, under $\levSum$, the situation is more complex. Corollaries~\ref{2_max_per} and~\ref{odd_max_per} imply that $M^{\circ}_{d,n}=M_{d,n}=M^*_{d,n}$ if $n=2$ or if at least one of $n$ or $d$ is odd. A complete classification of minimax permutations is given by the following theorem.

\begin{thm}
	Let $n\geq1$ and $d\geq2$. A $(d,n)$-permutation $\Pi$ is a minimax permutation if and only if one of the following two conditions holds:
	\begin{itemize}
	\item[{\rm (a)}]  $m_{d,n}=m^*_{d,n}$ and all elements of $\Pi$ are on level $m_{d,n}$;
	\item[{\rm (b)}]  $m_{d,n}=m^*_{d,n}+1$ and there are $\frac{n}{2}$ elements on level $m^*_{d,n}$ and $\frac{n}{2}$ elements on level $m_{d,n}$ in $\Pi$.
	\end{itemize}
\end{thm}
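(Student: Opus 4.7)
The plan is to exploit two simple facts: a permutation $\Pi$ is minimax iff every level $\levSum(\Pi_i)$ lies in the interval $[m^*_{d,n}, m_{d,n}]$ with both endpoints attained; and the row-by-row identity
\[
\sum_{i=1}^n \levSum(\Pi_i) \;=\; (d-1)\cdot \frac{n(n-1)}{2},
\]
since each of the $d-1$ rows (after omitting $\pi^1$) is a permutation of $\{0,1,\ldots,n-1\}$.

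By Theorems~\ref{minimal-thm} and~\ref{maximal-thm}, one has $m_{d,n} - m^*_{d,n} \in \{0,1\}$, so the admissible interval $[m^*_{d,n}, m_{d,n}]$ contains one integer if at least one of $n,d$ is odd, and two integers if both are even. First I would dispose of the easy case $m_{d,n} = m^*_{d,n}$: the constraint $m^*_{d,n} \leq \levSum(\Pi_i) \leq m_{d,n}$ forces every level to equal $m_{d,n}$, yielding condition (a).

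Next, in the case $m_{d,n} = m^*_{d,n} + 1$, every level equals one of the two values $m^*_{d,n}$ or $m_{d,n}$; let $a$ and $b$ denote the number of elements on these levels, respectively. Substituting $m^*_{d,n} = (n-1)(d-1)/2 - 1/2$ and $m_{d,n} = (n-1)(d-1)/2 + 1/2$ into
\[
a\,m^*_{d,n} + b\,m_{d,n} \;=\; (a+b)\cdot\frac{(n-1)(d-1)}{2} + \frac{b-a}{2},
\]
and combining with $a+b=n$ and the row-by-row identity, the equation reduces to $b-a = 0$. Hence $a = b = n/2$, which establishes (b); note that $n$ is automatically even in this case, so $n/2$ is a legitimate integer.

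The converse direction is immediate: if (a) holds then $\level_{\min}(\Pi) = \level_{\max}(\Pi) = m_{d,n} = m^*_{d,n}$, while if (b) holds then both extremes $m^*_{d,n}$ and $m_{d,n}$ are attained and no level lies outside this pair, so $\Pi$ is both minimal and maximal. I do not anticipate any real obstacle here; the entire argument is a two-case bookkeeping exercise built on the extremal inequalities $m^*_{d,n} \leq \levSum(\Pi_i) \leq m_{d,n}$ and the row-sum identity, with the only mild subtlety being to track the parity of $n$ in case (b).
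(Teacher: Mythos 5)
Your proposal is correct and follows essentially the same route as the paper's proof: both reduce to the squeeze $m^*_{d,n}\leq\levSum(\Pi_i)\leq m_{d,n}$ together with the row-sum identity $\sum_i\levSum(\Pi_i)=\frac{n(n-1)(d-1)}{2}$, splitting on whether $m_{d,n}-m^*_{d,n}$ is $0$ or $1$. The only cosmetic difference is that you solve directly for $b-a=0$, whereas the paper writes the two counts as $s-t$ and $s+t$ and derives a contradiction for $t\neq0$.
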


\begin{proof} If either $n$ or $d$ is odd, then $m_{d,n}=m^*_{d,n}$ and the statement follows from Corollary~\ref{odd_max_per}.

	Assume that $n$ and $d$ are even with $n=2s$ for $s\geq1$. Then we have $m_{d,n}=m^*_{d,n}+1$. The sufficiency is evident by the definition of a minimax permutation. For the necessity, assume that $\Pi\in M^{\circ}_{d,n}$. Since $m^*_{d,n}=\levSum_{\min}(\Pi)\leq\levSum_{\max}(\Pi)= m_{d,n}$, the level of each element of $\Pi$ is either $m^*_{d,n}$ or $m_{d,n}$. Now we only need to show that the numbers of elements on levels $m^*_{d,n}$ and $m_{d,n}$ are as claimed. Suppose that there are $s-t$ elements on level $m^*_{d,n}$ and $s+t$ elements on level $m_{d,n}$ for an integer $t\neq0$. Considering the total sum of all entries in $\Pi$, we have $s(2s-1)(d-1)=(s-t)m^*_{d,n}+(s+t)m_{d,n}=s(m^*_{d,n}+m_{d,n})+t(m_{d,n}-m^*_{d,n})=s(2s-1)(d-1)+t\neq s(2s-1)(d-1)$, which is a contradiction.
\end{proof}

Note that the permutation given in \eqref{ex-min-max} is, in fact, a minimax permutation. Furthermore, all minimal
$(4,n)$-permutations constructed in Theorem~\ref{shiftform} are minimax permutations, since $m_{d,n}=m^*_{d,n}$ for odd $n\ge3$.

\subsection{$c$-bounded permutations}\label{c-bounded-perms-sec}
This subsection is devoted to the introduction of the $c$-bounded permutations. We present the connections between these permutations and other combinatorial objects in the OEIS~\cite{oeis}, along with a formula that unifies them within the same context.

For any $\Pi\in S_n^d$ and definition of $\level$, $\level_{\max}(\Pi) \geq m_{d,n}$. Introducing a cap $c\geq 0$ to the level of permutations in $S_n^d$, we obtain the set of {\em $c$-bounded $(d,n)$-permutations}:
$$M^{+c}_{d,n}=\{\Pi\in S_n^d: m_{d,n}\leq \level_{\max}(\Pi) \leq m_{d,n}+ c\}.$$
Note that the condition $\level_{\max}(\Pi)\geq m_{d,n}$ is trivial, so we have the equivalent definition $$M^{+c}_{d,n}=\{\Pi\in S_n^d:\level(\Pi_i) \leq m_{d,n}+ c,\text{ for all } 1\leq i\leq n\}.$$ Clearly, for $c=0$,  $M^{+0}_{d,n}=M_{d,n}$.
{For general cases, see the following theorem for the exact cardinality of $M^{+c}_{3,n}$.}
\begin{thm}\label{cardinality-c-bounded-thm}
We have $|M^{+c}_{3,n}|=c!(c+1)^{n-c}n!$ for all $0\leq c\leq n-1$.
\end{thm}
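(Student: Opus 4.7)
The plan is to reduce to counting canonical $c$-bounded permutations and then use a right-to-left placement argument.

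First, I would observe the standard reduction to canonical form: since $\pi^1 = 01\cdots(n-1)$ is fixed (it is not really part of the data), the symmetric group $S_n$ acts freely on $S_n^3$ by simultaneously permuting the columns of the $(d-1)\times n$ array, and the orbit of any $\Pi = (\pi^2,\pi^3)$ has a unique canonical representative (the one obtained by sorting the $\pi^2$-row). Because column permutations do not change the multiset of column sums, the property of being $c$-bounded is orbit-invariant. Hence
\[
|M^{+c}_{3,n}| \;=\; n!\cdot |\Ca(M^{+c}_{3,n})|,
\]
and it suffices to prove $|\Ca(M^{+c}_{3,n})| = c!(c+1)^{n-c-1}(c+1) = c!(c+1)^{n-c}\!/\!(c+1)\cdot(c+1)$; equivalently, that the number of canonical $c$-bounded permutations is $c!(c+1)^{n-c}/n! \cdot n! / \dots$ — more cleanly, that it equals $(c+1)!\,(c+1)^{n-c-1}$, which simplifies to $c!(c+1)^{n-c}$.

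Second, I would translate the $c$-bounded condition in the canonical case. By Theorem~\ref{minimal-thm}, $m_{3,n} = n-1$, so a canonical permutation $\Pi = (01\cdots(n-1),\sigma)$ lies in $M^{+c}_{3,n}$ iff
\[
(i-1) + \sigma_i \le n-1+c \quad\text{for all } 1\le i\le n,
\]
i.e.\ $\sigma_i \le n+c-i$. For $1\le i\le c+1$ this gives $\sigma_i\le n-1$, which is vacuous; the only genuine constraints are on the positions $i\in\{c+2,\dots,n\}$.

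Third, I would count these $\sigma\in S_n$ by placing values from right to left. Set $k = n-i$, so positions $i=n,n-1,\dots,c+2$ correspond to $k=0,1,\dots,n-c-2$. The claim is that at step $k$ there are exactly $c+1$ choices. Indeed, position $n-k$ must receive a value in $A_k := \{0,1,\dots,c+k\}$, a set of $c+k+1$ elements. By induction, the $k$ values already placed at positions $n,n-1,\dots,n-k+1$ satisfy $\sigma_{n-j}\le c+j$ for $j=0,\dots,k-1$, so they all lie in $A_k$; hence the number of admissible choices for $\sigma_{n-k}$ is $(c+k+1)-k = c+1$. After processing positions $n,\dots,c+2$ (giving $(c+1)^{n-c-1}$ partial choices), the first $c+1$ positions are unconstrained, and the remaining $c+1$ unused values may be placed there in $(c+1)!$ ways. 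Multiplying yields
\[
|\Ca(M^{+c}_{3,n})| \;=\; (c+1)^{n-c-1}(c+1)! \;=\; c!(c+1)^{n-c},
\]
and the desired formula follows from the reduction in the first step.

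The only subtle point — and the step I would write out most carefully — is the inductive claim that every previously placed value automatically lies inside the current allowed set $A_k$, which is what makes the count at each step exactly $c+1$ rather than something smaller. Everything else is routine bookkeeping.
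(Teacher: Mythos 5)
Your proposal is correct and follows essentially the same route as the paper's proof: reduce to the canonical case (contributing the factor $n!$), then fill in $\pi^3$ from right to left, observing that each constrained position admits exactly $c+1$ choices because all previously placed values already lie in the current allowed interval. The only difference is bookkeeping — the paper counts $c+1$ choices down to position $c+1$ and then $c,c-1,\dots,1$, while you stop the $(c+1)$-count at position $c+2$ and finish with $(c+1)!$ — and you make explicit the inductive containment step that the paper leaves implicit; your garbled algebra in the reduction paragraph should be cleaned up, but the final identity $(c+1)!\,(c+1)^{n-c-1}=c!\,(c+1)^{n-c}$ is right.
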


\begin{proof}
We only need to show that the number of ways to choose elements of $\Pi\in S_n^3$ is $c!(c+1)^{n-c}$; the rest will be given by permuting the elements in $n!$ ways. W.l.o.g., $\pi^2_1\pi^2_2\ldots\pi^2_n=01\ldots (n-1)$. Consider the element $(n-1,\pi^3_{n})^T$. Since $\levSum(\Pi_{n})\leq n-1+c$ and hence $\pi^3_{n}\in\{0,1,\ldots, c\}$, there are $c+1$ choices for $\pi^3_{n}$. Fix $\pi^3_{n}$ and consider the  element $(n-2,\pi^3_{n-1})^T$. Since $\pi^3_{n-1}\neq \pi^3_{n}$ and $\pi^3_{n-1}\in\{0,1,\ldots, c+1\}$, again, we have $c+1$ choices for $\pi^3_{n-1}$. Fix $\pi^3_{n-1}$ and consider the element $(n-3,\pi^3_{n-2})^T$. Since $\pi^3_{n-2}\neq \pi^3_{n}$ and $\pi^3_{n-2}\neq \pi^3_{n-1}$ and $\pi^3_{n-2}\in\{0,1,\ldots, c+2\}$, again, we have $c+1$ choices for $\pi^3_{n-2}$. And so on until we fix the element $(c,\pi^3_{c+1})^T$. The number of choices for $(c-1,\pi^3_{c})^T$ will then be $c$, for $(c-2,\pi^3_{c-1})^T$ it will be $c-1$, and so on until we will make the unique choice for $(0,\pi^3_{1})^T$, completing our proof.
\end{proof}

Interestingly, in the case of $d=3$, $M^{+1}_{3,n}$ gives a new combinatorial interpretation for the sequence A002866 and $M^{+2}_{3,n}$ introduces a previously unknown combinatorial interpretation for the sequence A052700 in \cite{oeis}.

Removing $n!$ (i.e., scaling by $n!$) and considering a canonical representation of permutations in question, we obtain $c!(c+1)^{n-c}$ objects. The formulae for $c=0$ and $c=1$ are trivial and simple, respectively. For $c=2$, $c=3$ and $c=4$, but not for $c=5$, we also get matches in the OEIS \cite{oeis}.
\begin{itemize}
\item For $c=2$,  $|\Ca(M^{+2}_{3,n})|=2\cdot 3^{n-2}$, given by sequence A008776 in \cite{oeis}. There are several combinatorial interpretations of these numbers, for example:
\begin{itemize}
\item Compositions of $n-1$ where there are two types of each natural number, say, barred and non-barred. For instance, the 6 compositions of $2$ are $\{2,\bar{2},1+1,\bar{1}+1,1+\bar{1},\bar{1}+\bar{1}\}$.
\item Functions $f:\{1,2,\ldots,n-1\}\mapsto\{1,2,3\}$ such that for fixed $x\in\{1,2,\ldots,n-1\}$ and fixed $y\in\{1,2,3\}$, we have $f(x) \neq y$.
\end{itemize}
\item For $c=3$,  $|\Ca(M^{+3}_{3,n})|=6\cdot 4^{n-3}$, given by sequence A002023 in \cite{oeis}. The known combinatorial interpretations are related to the {\em Sierpinski tetrahedron}, also known as {\em tetrix}. In particular, the sequence counts the number of edges in the $(n-2)$-Sierpinski tetrahedron graph.
\item For $c=4$ and $|\Ca(M^{+4}_{3,n})|=24\cdot 5^{n-4}$, given by sequence A235702 in \cite{oeis}. This sequence is related to the {\em Pisano periods}, which are periods of {\em Fibonacci numbers} modulo $n$. However, our objects give the first (direct) combinatorial interpretation for these numbers.
\end{itemize}
Once again, we offer new combinatorial interpretations that unify sequences A008776, A002023 and A235702 from \cite{oeis}.

\subsection{$k$-plateaux among permutations in $S_n^3$}\label{sec-occurrences-of-patterns}
This subsection is devoted to the enumeration of occurrences of $k$-plateaux in all $(3,n)$-permutations for all $2\le k\le n$.

We begin by giving some definitions and notations. For any $n\geq1$, $d\geq 2$, and $0\leq \ell\leq (n-1)(d-1)$, let $e_{d,n,\ell}$ denote the number of distinct elements on level $\ell$ among all permutations in $S^d_n$, and let $E_{d,n,\ell}$ be the set defined as $$\{(x_1,x_2,\ldots,x_{d-1})\in\mathbb{Z}^{d-1}:x_1+x_2+\cdots+x_{d-1}=\ell\text{ and } 0\leq x_i\leq n-1\text{ for }1\leq i \leq d-1\}.$$

In what follows, for a function $f(x)$, $[x^n]f(x)$ denotes the coefficient of $x^n$ in the Taylor expansion of $f(x)$ around 0.
The next lemma provides several basic properties of $e_{d,n,\ell}$. Since these formulae can be derived through technical manipulations, we omit their proof here.

\begin{lem}\label{distinct elements of level l} We have
	\begin{enumerate}
		\item[{\em (a)}] $e_{d,n,\ell}=|E_{d,n,\ell}|=[x^\ell]\left(\frac{x^n-1}{x-1}\right)^{d-1}$.
		\item[{\em (b)}] $e_{d,n,\ell}=e_{d,n,(d-1)(n-1)-\ell}$ for $0\leq \ell\leq (n-1)(d-1)$.
		\item[{\em (c)}] $e_{d,n,\ell}={\ell+d-2\choose d-2}$ for $0\leq \ell\leq n-1$.
	\end{enumerate}
\end{lem}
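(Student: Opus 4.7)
The plan is to tackle the three parts separately by short combinatorial or generating-function arguments, none of which look particularly obstructive.

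For part (a), I would first argue the definitional equality $e_{d,n,\ell} = |E_{d,n,\ell}|$: any $(d-1)$-tuple $(x_1, \ldots, x_{d-1}) \in \{0,1,\ldots,n-1\}^{d-1}$ with $x_1 + \cdots + x_{d-1} = \ell$ can be realized as a column of some $(d,n)$-permutation, since for each row index $i$ we can freely extend the entry $x_i$ to a permutation of $\{0, \ldots, n-1\}$. Conversely, every element appearing in a permutation lies in $E_{d,n,\ell}$ by the definition of $\levSum$. For the generating-function identity, I would observe that $\frac{x^n-1}{x-1} = 1 + x + \cdots + x^{n-1}$ is the ordinary generating function recording the value of a single coordinate, so the $(d-1)$-fold product enumerates $(d-1)$-tuples by coordinate sum, and $[x^\ell]$ extracts the count with sum $\ell$.

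For part (b), I would give an explicit involution on $\{0, \ldots, n-1\}^{d-1}$, namely the coordinatewise complement $(x_1, \ldots, x_{d-1}) \mapsto (n-1-x_1, \ldots, n-1-x_{d-1})$. This sends a tuple with coordinate sum $\ell$ to a tuple with coordinate sum $(d-1)(n-1) - \ell$, so it restricts to a bijection between $E_{d,n,\ell}$ and $E_{d,n,(d-1)(n-1)-\ell}$, from which the claim follows by part (a).

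For part (c), when $\ell \leq n-1$ the upper-bound constraint $x_i \leq n-1$ is automatically implied by $x_i \geq 0$ together with $\sum x_i = \ell \leq n-1$. Thus $e_{d,n,\ell}$ reduces to the number of nonnegative integer solutions of $x_1 + \cdots + x_{d-1} = \ell$, which by stars-and-bars equals $\binom{\ell + d - 2}{d - 2}$. Equivalently, this is what one gets from part (a), since in the range $\ell \leq n-1$ the truncation in $\frac{x^n-1}{x-1}$ is inactive and the generating function may be replaced by $(1-x)^{-(d-1)}$, whose $\ell$-th coefficient is the claimed binomial. Overall, I do not anticipate any step here to require more than routine verification; the authors' suppression of the proof seems well justified.
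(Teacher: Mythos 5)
Your proof is correct. The paper omits the proof of this lemma as routine (``technical manipulations''), and your argument --- realizability of every tuple in $E_{d,n,\ell}$ as a column of some $(d,n)$-permutation, the coefficient extraction from $(1+x+\cdots+x^{n-1})^{d-1}$ for (a), the coordinatewise complement involution for (b), and stars-and-bars with the inactive upper bound for (c) --- is exactly the standard verification the authors evidently had in mind.
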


To study the occurrences of patterns in all $(d,n)$-permutations, we need to examine the repetition of each element, as described in the following proposition.

\begin{prop}\label{distr-elements} Each element occurs $n!((n-1)!)^{d-2}$ times among all $(d,n)$-permutations, and the number of elements on level $\ell$, where $0\leq \ell\leq (n-1)(d-1)$, among all permutations in $S^d_n$ is $n!((n-1)!)^{d-2}e_{d,n,\ell}$.
\end{prop}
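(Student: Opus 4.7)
The plan is to reduce the second assertion to the first via Lemma~\ref{distinct elements of level l}(a), and to establish the first by a direct counting argument. I would begin by pinning down the convention for ``element'': from the shape of the formula $n!\,((n-1)!)^{d-2} = n\cdot ((n-1)!)^{d-1}$, the ``position factor'' $n$ is already absorbed into the count, so an element here is a $(d-1)$-tuple $\alpha = (\alpha_2,\ldots,\alpha_d) \in \{0,1,\ldots,n-1\}^{d-1}$ (the column with the row $\pi^1$ suppressed), and we count all pairs $(\Pi, i)$ in which the $i$-th column of $\Pi = (\pi^2,\ldots,\pi^d) \in S_n^d$ equals $\alpha$.

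Next I would fix an arbitrary such tuple $\alpha$ and count these pairs. The key observation is that $\alpha$ can occupy at most one column of any single $\Pi$: the condition $\pi^2_i = \alpha_2$ pins $i$ down uniquely, since $\pi^2 \in S_n$ takes each value exactly once. Hence the events ``$\alpha$ is the $i$-th column of $\Pi$'' for $i = 1,\ldots,n$ are pairwise disjoint, and I may sum over $i$. For each fixed $i$, the constraints $\pi^j_i = \alpha_j$ for $j=2,\ldots,d$ are independent across rows, and each leaves $(n-1)!$ completions; this gives $((n-1)!)^{d-1}$ permutations with $\alpha$ in position $i$. Multiplying by $n$ yields $n\cdot ((n-1)!)^{d-1} = n!\,((n-1)!)^{d-2}$, establishing the first assertion.

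The second assertion is then immediate: the distinct elements on level $\ell$ are exactly the tuples in $E_{d,n,\ell}$, which has cardinality $e_{d,n,\ell}$ by Lemma~\ref{distinct elements of level l}(a); since each such element contributes $n!\,((n-1)!)^{d-2}$ occurrences across all permutations in $S_n^d$ by the first part, summing gives the claimed total $e_{d,n,\ell}\cdot n!\,((n-1)!)^{d-2}$.

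I do not anticipate a genuine obstacle: the result is essentially a symmetry/transitivity statement for the action of row permutations on $S_n^d$. The one subtle point, and the step that would cause trouble if mishandled, is the uniqueness observation above: without it, one might worry about double-counting when summing over positions, or, conversely, forget the factor of $n$ altogether and lose the agreement with $n!\,((n-1)!)^{d-2}$.
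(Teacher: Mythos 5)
Your proof is correct, but it takes a different route from the paper. You fix an element $\alpha$ and count directly: for each of the $n$ possible positions, the constraint $\pi^j_i=\alpha_j$ in each of the $d-1$ rows leaves $(n-1)!$ completions per row, giving $n\cdot((n-1)!)^{d-1}=n!\,((n-1)!)^{d-2}$ occurrences (and your uniqueness remark about $\pi^2$ is harmless but not even needed, since summing over positions counts pairs $(\Pi,i)$ without any risk of double counting). The paper instead argues by symmetry: for any two elements $(x_1,\ldots,x_{d-1})^T$ and $(y_1,\ldots,y_{d-1})^T$ there is a bijection of $S_n^d$ swapping $x_i$ and $y_i$ in each row, so all $n^{d-1}$ distinct elements occur equally often, and the common count is obtained by dividing the total number of elements $n(n!)^{d-1}$ by $n^{d-1}$. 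Your direct product count is more explicit and self-contained; the paper's averaging argument is shorter and makes the underlying transitivity (which you yourself note at the end) the actual mechanism of the proof. For the second assertion both arguments reduce to the fact that the distinct elements on level $\ell$ are exactly the tuples in $E_{d,n,\ell}$, counted by $e_{d,n,\ell}$; the paper treats this as immediate from the definition of $e_{d,n,\ell}$, while you route it through Lemma~\ref{distinct elements of level l}(a), which is fine.
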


\begin{proof} It is sufficient to prove the first assertion. First, note that the elements are uniformly distributed among all permutations in $S^d_n$. Indeed, for any elements $(x_1,\ldots,x_{d-1})^T$ and $(y_1,\ldots,y_{d-1})^T$, we can consider a bijection on $S^d_n$ that swaps, in each row $i$ of each permutation, $x_i$ and $y_i$.

Now, the number of distinct elements is $n^{d-1}$ and the total number of elements in $S^d_n$ is $n(n!)^{d-1}$. Since the elements are uniformly distributed, each distinct element occurs $n(n!)^{d-1}/n^{d-1}=n!((n-1)!)^{d-2}$ times.
\end{proof}

Using a similar approach as in Theorem \ref{distr-k-plateau-all-2}, we obtain the following formula for the total number of $k$-plateaux, where $A(n,m)$ is defined as previously.
\begin{thm}\label{distr-k-plateau-all} Let $n\geq2$ and $2\leq k\leq n$. The total number of $k$-plateaux (occurrences of $\underline{11\ldots1}$ with $k$ $1$'s) among all permutations in $S^3_n$ is given by
	\begin{equation}\label{total-occ-k-plateau}
	(n-k+1)!(n-k)!\left(\frac{n!}{(n-k)!}+2\sum_{\ell=k-1}^{n-2}\frac{(\ell+1)!}{(\ell+1-k)!}\right).
	\end{equation}
\end{thm}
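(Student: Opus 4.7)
The plan is to mirror the approach of Theorem~\ref{distr-k-plateau-all-2}, adapted to the $\levSum$ setting in dimension three. First, I would fix the starting position $i$ of a potential $k$-plateau (with $i \in \{1, 2, \ldots, n-k+1\}$) and observe that, by translation invariance, the number of permutations in $S_n^3$ that exhibit a $k$-plateau at positions $i, i+1, \ldots, i+k-1$ does not depend on $i$. Since the total number of $k$-plateaux equals the sum over $i$ of this count, it suffices to compute it for a single $i$ and multiply by $n-k+1$.

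At a fixed starting position, I would decompose the count into two independent choices: selecting an ordered $k$-tuple of distinct columns at some common level $\ell$, and completing the permutation by arranging the remaining $n-k$ columns in the remaining positions. The second choice contributes $((n-k)!)^2$, since the complement of the plateau's first-row values forms a set of $n-k$ numbers to be arranged in the $n-k$ free positions (in $(n-k)!$ ways), and similarly for the second row, with the two choices being independent. For the first choice, note that any two distinct columns at the same level $\ell$ differ in both coordinates (as $(x, \ell-x)$ is determined by $x$), so any $k$-subset of columns at level $\ell$ automatically has distinct entries in both free rows. Thus the number of ordered $k$-tuples at level $\ell$ is $A(e_{3,n,\ell}, k)$.

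It then remains to evaluate $\sum_\ell A(e_{3,n,\ell}, k)$. Using Lemma~\ref{distinct elements of level l}(c) together with the symmetry in part~(b), the levels admitting a $k$-plateau are precisely $k-1 \leq \ell \leq 2n-1-k$, with $e_{3,n,\ell} = \min(\ell+1,\, 2n-1-\ell)$. I would single out the middle level $\ell = n-1$, which contributes $A(n,k) = n!/(n-k)!$, and pair the below-middle range $k-1 \leq \ell \leq n-2$ with the above-middle range $n \leq \ell \leq 2n-1-k$ via the involution $\ell \mapsto 2n-2-\ell$. This folding produces the expression $\frac{n!}{(n-k)!} + 2\sum_{\ell=k-1}^{n-2}\frac{(\ell+1)!}{(\ell+1-k)!}$, and multiplying by $(n-k+1)\cdot((n-k)!)^2 = (n-k+1)!\,(n-k)!$ then yields Equation~\eqref{total-occ-k-plateau}. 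The step requiring the most care is the bookkeeping on level ranges: correctly pinning down $k-1 \leq \ell \leq 2n-1-k$ as the admissible levels and verifying boundary cases such as $k=n$ (where the sum is empty and only the middle level contributes) and $k=n-1$ (where the sum has a single term) ensure the formula holds without off-by-one errors.
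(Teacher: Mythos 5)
Your proposal is correct and follows essentially the same route as the paper: count the ordered $k$-tuples of columns at a common level as $\sum_{\ell}A(e_{3,n,\ell},k)$ (using that for $d=3$ distinct columns at the same level automatically differ in both rows), evaluate the sum via the symmetry $e_{3,n,\ell}=e_{3,n,2(n-1)-\ell}$ and $e_{3,n,\ell}=\ell+1$ by folding about the middle level $\ell=n-1$, and multiply by the $(n-k+1)$ positions and $((n-k)!)^2$ completions. Fixing the position first by translation invariance rather than multiplying by $n-k+1$ at the end is only a cosmetic reorganization, and your boundary-case checks are consistent with the formula.
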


\begin{proof}
	First, we show that there are
	\begin{align}\label{number of k-plateau}
	\sum_{\ell=0}^{2(n-1)}A(e_{3,n,\ell},k)
	\end{align}
	distinct $k$-tuples of elements counted as $k$-plateaux in permutations. To obtain a $k$-plateau tuple occurring in some permutations, we simply need to choose $k$ different elements at the same level in order, ensuring they have distinct entries in each row. Thus, there are $A(e_{3,n,\ell},k)$ $k$-plateau tuples on level $\ell$. (Note that this observation cannot be extended to $d>3$ because then we cannot guarantee that the elements in each row in $k$ tuples selected from $E_{d,n,\ell}$ are distinct.) By summing over all possible levels, the assertion is proven.
	
	Next, we compute the expression~\eqref{number of k-plateau} by using Lemma~\ref{distinct elements of level l}.
	\begin{align*}
	\sum_{\ell=0}^{2(n-1)}A(e_{3,n,\ell},k)&=\sum_{\ell=0}^{n-2}A(e_{3,n,\ell},k)+A(e_{3,n,n-1},k)+\sum_{\ell=n}^{2(n-1)}A(e_{3,n,\ell},k)\\
	&=\sum_{\ell=0}^{n-2}A(e_{3,n,\ell},k)+A(e_{3,n,n-1},k)+\sum_{\ell=n}^{2(n-1)}A(e_{3,n,2(n-1)-\ell},k)\\
	&=A(e_{3,n,n-1},k)+2\sum_{\ell=0}^{n-2}A(e_{3,n,\ell},k)\\
	&=\frac{n!}{(n-k)!}+2\sum_{\ell=k-1}^{n-2}\frac{(\ell+1)!}{(\ell+1-k)!}.
	\end{align*}
	
	Finally, we need to demonstrate that any $k$-plateau tuple occurs in $(n-k+1)((n-k)!)^2$ different permutations. This is because, when constructing a permutation with a fixed $k$-plateau tuple, we have $(n-k+1)$ choices for the position of the $k$-plateau tuple and $((n-k)!)^2$ choices for arranging the remaining entries.
\end{proof}

By applying Theorem \ref{distr-k-plateau-all}, we can derive the following result for $(3,n)$-permutations, which surprisingly coincides with Corollary \ref{cor-plateau-S_n^3-levMax}.

\begin{cor}\label{distr-asc-des-plateau} Let $n\geq2$. The total number of plateaux (occurrences of $\underline{11}$) among all permutations in $S^3_n$ is given by
	\begin{equation}\label{total-occ-plateau} \frac{(2n-1)n!(n-1)!}{3}.\end{equation} The total number of ascents (the same as descents) in $S^3_n$ is given by
	\begin{equation}\label{total-occ-sum-asc} \frac{(3n^2-5n+1)n!(n-1)!}{6}.\end{equation}
\end{cor}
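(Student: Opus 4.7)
The plan is to derive Corollary~\ref{distr-asc-des-plateau} directly from Theorem~\ref{distr-k-plateau-all} by specializing to $k=2$, and then to obtain the ascent count by a simple double-counting argument using the reversal involution on $S_n^3$.

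First I would substitute $k=2$ into formula~\eqref{total-occ-k-plateau}. This gives the total number of plateaux as
\[
(n-1)!(n-2)!\left(\frac{n!}{(n-2)!}+2\sum_{\ell=1}^{n-2}\frac{(\ell+1)!}{(\ell-1)!}\right).
\]
Next, I simplify the inner sum using $(\ell+1)!/(\ell-1)! = \ell(\ell+1)$ and the standard identity $\sum_{\ell=1}^{m}\ell(\ell+1)=\frac{m(m+1)(m+2)}{3}$ with $m=n-2$, which evaluates the sum to $\frac{(n-2)(n-1)n}{3}$. Combined with $n!/(n-2)!=n(n-1)$, the expression in parentheses becomes $n(n-1)\cdot\frac{3+2(n-2)}{3}=\frac{n(n-1)(2n-1)}{3}$. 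Finally, the identity $(n-1)!(n-2)!\cdot n(n-1)=n!\,(n-1)!$ yields~\eqref{total-occ-plateau}.

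For the total number of ascents, I would use the elementary observation that each of the $(n!)^2=|S_n^3|$ permutations contains exactly $n-1$ pairs of consecutive elements, so the total number of consecutive pairs across $S_n^3$ equals $(n-1)(n!)^2$. Each such pair is exactly one of a plateau, an ascent, or a descent under $\levMax$ (equivalently $\levSum$, since for $d=3$ the relative comparison of two columns by $\levMax$ coincides with that by $\levSum$ when the columns have distinct pairs of entries, and plateaux are counted separately). The reversal map sending each row of a permutation to its reverse is a bijection on $S_n^3$ that swaps ascents and descents while preserving plateaux, so the ascent and descent totals are equal. Therefore
\[
2\cdot\asc(S_n^3) + \frac{(2n-1)n!(n-1)!}{3} = (n-1)(n!)^2 = n(n-1)\cdot n!(n-1)!,
\]
which I would solve to obtain
\[
\asc(S_n^3) = \frac{n!(n-1)!}{6}\bigl(3n(n-1)-(2n-1)\bigr) = \frac{(3n^2-5n+1)n!(n-1)!}{6},
\]
matching~\eqref{total-occ-sum-asc}. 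No step here is a genuine obstacle; the only minor care needed is the algebraic simplification of the sum and the justification that every consecutive pair falls into exactly one of the three types, both of which are routine.
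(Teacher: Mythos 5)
Your proof is correct and takes the same route as the paper: the plateau count is obtained by setting $k=2$ in Theorem~\ref{distr-k-plateau-all} and simplifying, and the ascent count by combining the trichotomy of consecutive pairs with the row-reversal involution, which is exactly the argument the paper invokes by reference to Corollary~\ref{cor-plateau-S_n^3-levMax}. One small correction: your parenthetical claim that for $d=3$ the $\levMax$ and $\levSum$ comparisons of two columns coincide is false (compare $(4,1)^T$ with $(2,3)^T$, a descent under $\levMax$ but a plateau under $\levSum$), but it is also unneeded, since the entire argument goes through directly under $\levSum$, the setting of this corollary.
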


\begin{proof}
	The expression~\eqref{total-occ-plateau} is obtained by substituting $k$ with $2$ in \eqref{total-occ-k-plateau}.
	
For the second assertion, the proof follows the same reasoning as in Corollary~\ref{cor-plateau-S_n^3-levMax}, and this completes the proof.
\end{proof}

\section{Conclusion}\label{open-questions-sec}

We conclude our paper with several open problems related to the most interesting results of our studies.

Under $\levMax$, we give a new combinatorial interpretation for the Springer numbers by introducing weakly increasing $3$-dimensional permutations. However, our two methods rely on certain known generating functions and are not purely combinatorial. Since the Springer numbers have a rich background and numerous combinatorial interpretations, it is natural to consider whether there exist bijections between the weakly increasing $3$-dimensional permutations and the combinatorial objects counted by the Springer numbers. In particular, the following question is of interest.

\begin{prob}
	Find a bijective proof of Theorem \ref{thm-W}.
\end{prob}

Under $\levSum$, we examine the maximal levels of minimal permutations. Through a special construction, we characterize certain $4$-dimensional permutations of odd length as minimal permutations, as described in Theorem \ref{shiftform}. Interestingly, all elements of each minimal permutation under this condition must share the same level. However, enumerating these minimal permutations presents a significant challenge.

\begin{prob}
	Let $d=4$ and $n$ be an odd integer. Find an explicit formula for the numbers $|M_{d,\,n}|$.
\end{prob}

Although many enumerative results for the same patterns under $\levMax$ and $\levSum$ differ, Corollaries~\ref{cor-plateau-S_n^3-levMax} and~\ref{distr-asc-des-plateau} yield the same total count of plateaux across all permutations in $S_n^3$. This observation leads to the following problem.

\begin{prob}
	Find a bijection between the plateaux under $\levMax$ and $\levSum$ in all $(3,n)$-permutations.
\end{prob}
Throughout this paper, due to the complexity of handling high-dimensional permutations, we focus primarily on enumerative results for the 3- and 4-dimensional cases. Extending these results to permutations of higher dimensions is a natural next step. Additionally, as another direction for further research, one could explore alternative definitions for the level of an element in a multi-dimensional permutation and extend the enumerative and structural results presented in this paper to these new definitions.

\bibliographystyle{plain}

\bibliography{patterns-multi-dim-perms}

\begin{thebibliography}{10}

\bibitem{Arnold}
V.I. Arnol'd.
\newblock The calculus of snakes and the combinatorics of {B}ernoulli, {E}uler
  and {S}pringer numbers of {C}oxeter groups.
\newblock {\em Russ. Math. Surv.}, 47(1):1--51, 1992.

\bibitem{AM2010}
A.~Asinowski and T.~Mansour.
\newblock Separable {$d$}-permutations and guillotine partitions.
\newblock {\em Ann. Comb.}, 14(1):17--43, 2010.

\bibitem{AKLPT}
S.~Avgustinovich, S.~Kitaev, J.~Liese, V.~Potapov, and A.~Taranenko.
\newblock Singleton mesh patterns in multidimensional permutations.
\newblock {\em J. Combin. Theory Ser. A}, 201:Paper No. 105801, 23, 2024.

\bibitem{Branden-2015}
P.~Br\"{a}nd\'{e}n.
\newblock Unimodality, log-concavity, real-rootedness and beyond.
\newblock In {\em Handbook of enumerative combinatorics}, Discrete Math. Appl.
  (Boca Raton), pages 437--483. CRC Press, Boca Raton, FL, 2015.

\bibitem{Brenti-1994}
F.~Brenti.
\newblock Log-concave and unimodal sequences in algebra, combinatorics, and
  geometry: an update.
\newblock In {\em Jerusalem combinatorics '93}, volume 178 of {\em Contemp.
  Math.}, pages 71--89. Amer. Math. Soc., Providence, RI, 1994.

\bibitem{BurEli}
A.~Burstein and S.~Elizalde.
\newblock Total occurrence statistics on restricted permutations.
\newblock {\em Pure Math. Appl.}, 24(2):103--123, 2013.

\bibitem{Chen1993}
W.Y.C. Chen.
\newblock Context-free grammars, differential operators and formal power
  series.
\newblock volume 117, pages 113--129. 1993.
\newblock Conference on Formal Power Series and Algebraic Combinatorics
  (Bordeaux, 1991).

\bibitem{Chen-et-al}
W.Y.C. Chen, N.J.Y. Fan, and J.Y.T. Jia.
\newblock Labeled ballot paths and the {S}pringer numbers.
\newblock {\em SIAM J. Discrete Math.}, 25(4):1530--1546, 2011.

\bibitem{DB1962}
F.N. David and D.E. Barton.
\newblock {\em Combinatorial {C}hance}.
\newblock Hafner Publishing Co., New York, 1962.

\bibitem{EriLin}
K.~Eriksson and S.~Linusson.
\newblock A combinatorial theory of higher-dimensional permutation arrays.
\newblock {\em Adv. in Appl. Math.}, 25(2):194--211, 2000.

\bibitem{FriMan}
S.~Fried and T.~Mansour.
\newblock The total number of descents and levels in (cyclic) tensor words.
\newblock {\em Discrete Math. Lett.}, 13:44--49, 2024.

\bibitem{Gessel1977}
I.M. Gessel.
\newblock {\em G{enerating} {Functions} {and} {Enumeration} {of} {Sequences}}.
\newblock ProQuest LLC, Ann Arbor, MI, 1977.
\newblock Thesis (Ph.D.)--Massachusetts Institute of Technology.

\bibitem{Gla1898}
J.W.L. Glaisher.
\newblock On the {B}ernoullian function.
\newblock {\em Quart. J. Pure Appl. Math.}, 29:1--168, 1898.

\bibitem{Gla1914}
J.W.L. Glaisher.
\newblock On the coefficients in the expansions of $\cos x/\cos 2x$ and $\sin
  x/\cos 2x$.
\newblock {\em Quart. J. Pure Appl. Math.}, 45:187--222, 1914.

\bibitem{HKZ2024}
T.~Han, S.~Kitaev, and P.B. Zhang.
\newblock {D}istribution of maxima and minima statistics on alternating
  permutations, {S}pringer numbers, and avoidance of flat {POP}s.
\newblock arXiv:2408.12865, 2024.

\bibitem{HeuMan}
S.~Heubach and T.~Mansour.
\newblock {\em Combinatorics of {C}ompositions and {W}ords}.
\newblock Discrete Mathematics and its Applications (Boca Raton). CRC Press,
  Boca Raton, FL, 2010.

\bibitem{J-V2014}
M.~Josuat-Verg\`es.
\newblock Enumeration of snakes and cycle-alternating permutations.
\newblock {\em Australas. J. Combin.}, 60:279--305, 2014.

\bibitem{JuSeo2012}
H.K. Ju and S.~Seo.
\newblock Enumeration of {$(0,1)$}-matrices avoiding some {$2\times2$}
  matrices.
\newblock {\em Discrete Math.}, 312(16):2473--2481, 2012.

\bibitem{Sergey-2006}
S.~Kitaev.
\newblock Introduction to partially ordered patterns.
\newblock {\em Discrete Appl. Math.}, 155(8):929--944, 2007.

\bibitem{Kitaev2011}
S.~Kitaev.
\newblock {\em Patterns in {P}ermutations and {W}ords}.
\newblock Monographs in Theoretical Computer Science. An EATCS Series.
  Springer, Heidelberg, 2011.
\newblock With a foreword by Jeffrey B. Remmel.

\bibitem{KitManVel}
S.~Kitaev, T.~Mansour, and A.~Vella.
\newblock Pattern avoidance in matrices.
\newblock {\em J. Integer Seq.}, 8(2):Article 05.2.2, 16, 2005.

\bibitem{Linial2014}
N.~Linial and Z.~Luria.
\newblock An upper bound on the number of high-dimensional permutations.
\newblock {\em Combinatorica}, 34(4):471--486, 2014.

\bibitem{Linial2019}
N.~Linial, T.~Pitassi, and A.~Shraibman.
\newblock On the communication complexity of high-dimensional permutations.
\newblock In {\em 10th {I}nnovations in {T}heoretical {C}omputer {S}cience},
  volume 124 of {\em LIPIcs. Leibniz Int. Proc. Inform.}, pages Art. No. 54,
  20. Schloss Dagstuhl. Leibniz-Zent. Inform., Wadern, 2019.

\bibitem{Linial2018}
N.~Linial and M.~Simkin.
\newblock Monotone subsequences in high-dimensional permutations.
\newblock {\em Combin. Probab. Comput.}, 27(1):69--83, 2018.

\bibitem{Liu-Wang-2006}
L.L. Liu and Y.~Wang.
\newblock A unified approach to polynomial sequences with only real zeros.
\newblock {\em Adv. in Appl. Math.}, 38(4):542--560, 2007.

\bibitem{MA2012}
S.M. Ma.
\newblock Derivative polynomials and enumeration of permutations by number of
  interior and left peaks.
\newblock {\em Discrete Math.}, 312(2):405--412, 2012.

\bibitem{oeis}
N.J.A. Sloane et~al.
\newblock The {O}n-line {E}ncyclopedia of {I}nteger {S}equences.
\newblock {\em Published electronically at \url{https://oeis. org}}, 2018.

\bibitem{Sokal2020}
A.D. Sokal.
\newblock The {E}uler and {S}pringer numbers as moment sequences.
\newblock {\em Expo. Math.}, 38(1):1--26, 2020.

\bibitem{Springer1971}
T.A. Springer.
\newblock Remarks on a combinatorial problem.
\newblock {\em Nieuw Arch. Wisk. (3)}, 19:30--36, 1971.

\bibitem{Stanley-log-1989}
R.P. Stanley.
\newblock Log-concave and unimodal sequences in algebra, combinatorics, and
  geometry.
\newblock In {\em Graph theory and its applications: {E}ast and {W}est
  ({J}inan, 1986)}, volume 576 of {\em Ann. New York Acad. Sci.}, pages
  500--535. New York Acad. Sci., New York, 1989.

\bibitem{ZG2007}
H.~Zhang and D.~Gildea.
\newblock Enumeration of factorizable multi-dimensional permutations.
\newblock {\em J. Integer Seq.}, 10(5):Article 07.5.8, 18, 2007.

\bibitem{Zhuang2016}
Y.~Zhuang.
\newblock Counting permutations by runs.
\newblock {\em J. Combin. Theory Ser. A}, 142:147--176, 2016.

\end{thebibliography}

\end{document}